\definecolor{labelkey}{rgb}{0,0.08,0.45}
\definecolor{refkey}{rgb}{0,0.6,0.0}
\definecolor{Brown}{rgb}{0.45,0.0,0.05}
\definecolor{dgreen}{rgb}{0.00,0.49,0.00}
\definecolor{dblue}{rgb}{0,0.08,0.75}
\title{An Optimal Structured Zeroth-order Algorithm for Non-smooth Optimization}
\author{Marco Rando\thanks{MaLGa-DIBRIS, University of Genova, IT
		({\tt marco.rando@edu.unige.it}, {\tt lorenzo.rosasco@unige.it}).}
	\and Cesare Molinari\thanks{MaLGa - DIMA, University of Genova, Italy 
		({\tt molinari@dima.unige.it}, {\tt silvia.villa@unige.it}).}
	\and Silvia Villa\footnotemark[2]
	\and Lorenzo Rosasco\footnotemark[1] \thanks{Istituto Italiano di Tecnologia, Genova, Italy and CBMM - MIT, Cambridge, MA, USA}
}
\def\BState{\State\hskip-\ALG@thistlm}
\def\argmin{\operatornamewithlimits{arg\,min}}
\DeclareMathOperator{\E}{\mathbb{E}}
\newtheorem{theorem}{Theorem}
\newtheorem{corollary}{Corollary}
\newtheorem{lemma}{Lemma}
\newtheorem{proposition}{Proposition}
\newtheorem{remark}{Remark}
\newtheorem{assumption}{Assumption}
\newtheorem{definition}{Definition}
\providecommand{\scalT}[2]{\left\langle{#1},{#2}\right\rangle}
\providecommand{\vol}[1]{\text{vol}( {#1} )}
\newcommand{\R}{\mathbb R}
\newcommand{\N}{\mathbb N}
\newcommand{\SX}{\mathbb S}
\newcommand{\BX}{\mathbb B}
\newcommand{\eps}{\varepsilon}
\begin{document}

\maketitle

\begin{abstract}
Finite-difference methods are a class of algorithms designed to solve black-box optimization problems by approximating a gradient of the target function on a set of directions. In black-box optimization, the non-smooth setting is particularly relevant since, in practice, differentiability and smoothness assumptions cannot be verified. To cope with nonsmoothness, several authors use a smooth approximation of the target function and show that finite difference methods approximate its gradient. Recently, it has been proved that imposing a structure in the directions allows %
improving performance. However, only the smooth setting was considered. To close this gap, we introduce and analyze O-ZD, the first structured finite-difference algorithm for non-smooth black-box optimization. Our method exploits a smooth approximation of the target function and we prove that it approximates its gradient %
on a subset of random {\em orthogonal} directions. We analyze the convergence of O-ZD under different assumptions. %
For non-smooth convex functions, we obtain the optimal complexity. In the non-smooth non-convex setting, we characterize the number of iterations needed to bound the expected norm of the smoothed gradient. For smooth functions, our analysis recovers existing results for structured zeroth-order methods for the convex case and extends them to the non-convex setting. We conclude with numerical simulations where assumptions are satisfied, observing that our algorithm has very good practical performances.
\end{abstract}

\section{Introduction}
Black-box optimization problems are a class of problems for which only values of the target function are available and no first-order information is provided.
Typically, these problems arise when the evaluation of the objective function is based on a simulation and no analytical form of the gradient is accessible or its explicit calculation is too expensive  \cite{intro_schein,salimans2017evolution,NEURIPS2018_7634ea65,z0_sig_proc}. 

To face these problems, different methods that do not require first-order information have been proposed - see for instance \cite{nesterov2017random,Totzeck2022,fornasier,Hansen2006,tutorial_BO,Jain2018,LEWIS2000191} and references therein. These techniques are called derivative-free methods and a wide class of these is the class of finite-difference algorithms \cite{Kiefer1952StochasticEO,nesterov2017random,duchi_power_of_two}. %
These iterative procedures mimic first-order optimization strategies by replacing the gradient of the objective function with an approximation built through finite differences in random directions. 

Two types of finite-difference methods can be identified: unstructured  and structured ones, depending on the way in which the random directions are generated. In the former, directions are sampled i.i.d. from some distribution  \cite{nesterov2017random,duchi_power_of_two,chen2015randomized,shamir2017optimal} while in the latter, directions have to satisfy some structural constraints, e.g. orthogonality \cite{kozak2021zeroth,rando2022stochastic}. Several authors \cite{kozak2021zeroth,rando2022stochastic,Berahas2022,str_zo_applied} theoretically and empirically observed that imposing orthogonality among the directions provides better performance than using %
unstructured directions. Intuitively, imposing orthogonality allows us to avoid cases in which the gradient approximation is built using similar or redundant directions.

Notably, methods based on structured finite differences have been analyzed only for smooth and convex (or specific non-convex) target functions. This represents a strong limitation, since smoothness and convexity are in practice hardly verifiable, due to the nature of black-box optimization problems. 
The aim of this paper is the analysis of structured finite difference algorithms dropping smoothness and convexity assumptions. 

For unstructured finite-difference methods, a common way to face nonsmoothness consists in introducing a smooth approximation of the target function, also known as "smoothing" \cite{Bertsekas1973,duchi_rand_smoothing} and using it as a surrogate of the target. 
Although the gradient of the smoothing is not computable, different authors showed that for certain types of smoothing the unstructured finite-difference approximation provides an unbiased estimation of such a gradient - see, for example, \cite{nesterov2017random,shamir2017optimal,flaxman2005online,gasnikov_sph,Gao2018,ghadimi_lan}. 
This key observation allows to prove that unstructured finite-difference methods approximate a solution in different nonsmooth settings \cite{duchi_power_of_two, nesterov2017random,shamir2017optimal, flaxman2005online, gasnikov_sph,NEURIPS2022_a78f142a}. 

For structured finite-difference methods, the  analysis in the nonsmooth setting is not available. A key step which is missing is the proof of the fact that the surrogate of the gradient built with structured directions is an estimation of the gradient of a suitable smoothing.

In this paper, we close the gap, and introduce O-ZD, a structured finite-difference algorithm in the non-smooth setting.  The algorithm builds an approximation of the gradient of a smoothed target using a set of $\ell \leq d$ orthogonal directions. We analyze the procedure proving that our finite-difference surrogate is an unbiased estimation of the gradient of a smoothing. We provide convergence rates for non-smooth convex functions with optimal dependence on the dimension \cite{duchi_power_of_two} and rates for the non-smooth non-convex setting (for which lower bounds are not known \cite{NEURIPS2022_a78f142a}). Moreover, for non-smooth convex functions we provide the first proof of convergence of the iterates for structured finite differences in this setting. For smooth convex functions, we recover the standard results for structured zeroth-order methods \cite{kozak2021zeroth,rando2022stochastic}. We conclude with numerical illustrations. To the best of our knowledge, this is the first work on nonsmooth finite-difference method with structured directions.

The paper is organized as follows. In Section \ref{sec:setting}, we introduce the problem and the algorithm. In Section \ref{sec:main_results}, we state and discuss the main results. In Section \ref{sec:num_results} we provide some experiments and in \ref{sec:conclusions} some final remarks.

\section{Problem Setting \& Algorithm}\label{sec:setting}
 Given a function $f : \R^d \to \R$ and assuming that $f$ has at least a minimizer in $\R^d$, we consider the problem to find
 \begin{equation}\label{eqn:problem}
     x^* \in \argmin\limits_{x \in \R^d} f(x).
 \end{equation}
In particular, we consider the non-smooth setting where $f$ might be non-differentiable. To solve problem \eqref{eqn:problem}, we propose a zeroth-order algorithm, namely an iterative procedure  that uses only function values. 
At every iteration $k \in \N$, a first-order information of $f$ is approximated with finite-differences using a set of $\ell \leq d$ random orthogonal directions $(p_k^{(i)})_{i=1}^\ell$. Such orthogonal directions are represented as rotations (and reflections)  of the first $\ell$ vectors of the canonical basis $(e_i)_{i = 1}^\ell$. We set $p_k^{(i)} = G_k e_i$,  where $G_k$ belongs to the orthogonal group defined as  
 \begin{equation*}
 O(d) := \{ G \in \R^{d \times d} \, | \, \det{G} \neq 0 \, \wedge \, G^{-1} = G^\intercal \}.
 \end{equation*}
  Methods for generating orthogonal matrices are discussed in Appendix \ref{app:generate_dir_matrices}. 
  Given $G \in O(d)$, $0 <\ell \leq d$ and $h > 0$, 
 we consider the following central finite-difference surrogate of the gradient information
  \begin{equation}\label{eqn:surrogate_det}
      g_{(G, h)}(x)=\frac{d}{\ell} \sum\limits_{i = 1}^\ell \frac{f(x + h G e_i) - f(x - h  G e_i)}{2h} G e_i.
  \end{equation}
  Then, we introduce the following algorithm.
\begin{algorithm}[H] 
\caption{O-ZD: Orthogonal Zeroth-order Descent} \label{alg:ozd}
\begin{algorithmic}
  \STATE{{\bf Input:} $x_0 \in \R^d$, $(\alpha_k)_{k \in \N} \subset \R_{+}$, $(h_k)_{k \in \N} \subset \R_{+}$, $\ell \in \N$ s.t. $1 \leq \ell \leq d$}
     \FOR{$k = 1, \cdots$}
         \STATE{sample $G_k$ i.i.d. from $O(d)$}
         \STATE{$x_{k + 1} = x_k - \alpha_k g_{(G_k, h_k)}(x_k)$}
     \ENDFOR
 \end{algorithmic}    
\end{algorithm}
Starting from an initial guess $x_0 \in \R^d$, at every iteration $k \in \N$, the algorithm samples an orthogonal matrix $G_k$ i.i.d. from the orthogonal group $O(d)$ and computes a surrogate  $g_{(G_k, h_k)}$ of the gradient at the current iterate $x_k$. Then, it computes $x_{k + 1}$ by moving in the opposite direction of $g_{(G_k, h_k)}(x_k)$. This approach belongs to the class of {\em structured} finite-difference methods,  where a bunch of orthogonal directions is used to approximate a gradient of the target function \cite{kozak2021zeroth,rando2022stochastic}. Such algorithms have been proposed and analyzed only for smooth functions. To cope with this limitation, and extend the analysis to the nonsmooth setting, we exploit a smoothing technique.  For a fixed  a probability measure $\rho$ on $\R^d$ and a positive parameter $h > 0$  {called \it smoothing parameter}, we define the following {\em smooth} surrogate of $f$%
\begin{equation}%
    f_{h, \rho}(x) := \int f(x + h u) \, d\rho(u).
\end{equation}
As shown in \cite{Bertsekas1973}, $f_{h, \rho}$ is differentiable even when $f$ is not. In the literature, different authors have used this strategy to face non-smooth zeroth-order optimization with random finite-difference methods \cite{flaxman2005online,nesterov2017random,yousefian2012stochastic,gasnikov_sph} fixing specific smoothing distribution, but no one applied and analyze it for structured methods.%

In this work, $\rho$ is  the uniform distribution over the $\ell_2$ unit ball $\BX^d$, defining the smooth surrogate
 \begin{equation}\label{eqn:smt_target}
     f_h(x) = \frac{1}{\vol{\BX^d}} \int_{\BX^d} f(x + h u) \, du,
 \end{equation}
  where $\vol{{\BX^d}}$ denotes the volume of $\BX^d$. %
  One of our main contributions is the following Lemma which shows that the surrogate proposed in \eqref{eqn:surrogate_det} is an unbiased estimator of the gradient of the smoothing in \eqref{eqn:smt_target}.
   \begin{lemma}[Smoothing Lemma]\label{lem:smt_lemma}
 Given a probability space $(\Omega, \mathcal{F}, \mathbb{P})$, let $G: \Omega \to O(d)$ be a random variable where $O(d)$ is the orthogonal group endowed with the Borel $\sigma$-algebra. Assume that the probability distribution of $G$ is the (normalized) Haar measure.
 Let $h > 0$ and let  $g$ be the gradient surrogate defined in eq. \eqref{eqn:surrogate_det}. Then,
 \begin{equation*}
  (\forall x\in\mathbb{R}^d)\qquad   \E_G[g_{(G, h)}(x)] = \nabla f_{h}(x),
 \end{equation*}
 where $f_{h}$ is the smooth approximation of the target function $f$ defined in eq. \eqref{eqn:smt_target}.
 \end{lemma}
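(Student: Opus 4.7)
The plan is to reduce the $\ell$-term sum in $g_{(G,h)}(x)$ to a single integral over the sphere $\SX^{d-1}$ using the invariance of the Haar measure, and then invoke a Stokes-type identity to identify that integral with $\nabla f_h(x)$.

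First I would exploit the left invariance of the Haar measure on $O(d)$ to show that, for each fixed $i$, the push-forward of $G \mapsto G e_i$ is the unique $O(d)$-invariant probability on $\SX^{d-1}$, i.e.\ the uniform measure on the sphere. Consequently, each of the $\ell$ terms defining $g_{(G,h)}(x)$ has the same expectation
\begin{equation*}
\E_{v \sim \mathrm{Unif}(\SX^{d-1})}\left[\frac{f(x+hv)-f(x-hv)}{2h}\,v\right].
\end{equation*}
Since $v \mapsto -v$ preserves the uniform distribution on $\SX^{d-1}$, substituting $v \leftrightarrow -v$ in the second summand gives $\E_v[f(x-hv)\,v] = -\E_v[f(x+hv)\,v]$, so the central difference collapses to $\tfrac{1}{h}\E_v[f(x+hv)\,v]$.

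Next I would link this sphere expectation to $\nabla f_h(x)$ via the divergence theorem. Changing variables $y = x+hu$ rewrites $f_h(x) = (h^d \vol{\BX^d})^{-1}\int_{B(x,h)} f(y)\,dy$; differentiating componentwise and applying Stokes' theorem, with outward unit normal $(y-x)/h$ and parametrizing $\partial B(x,h)$ by $v \in \SX^{d-1}$, yields
\begin{equation*}
\nabla f_h(x) = \frac{1}{h\,\vol{\BX^d}}\int_{\SX^{d-1}} f(x+hv)\,v\,d\sigma(v) = \frac{d}{h}\,\E_{v \sim \mathrm{Unif}(\SX^{d-1})}[f(x+hv)\,v],
\end{equation*}
using the surface-area identity $\vol{\SX^{d-1}} = d\,\vol{\BX^d}$. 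Combining the two steps, each of the $\ell$ summands contributes $\tfrac{1}{d}\nabla f_h(x)$ in expectation, and the prefactor $d/\ell$ rescales the $\ell$ identical contributions to exactly $\nabla f_h(x)$, which is the claim.

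The main obstacle I anticipate is rigorously justifying the Stokes step when $f$ is merely nonsmooth: pulling $\nabla_x$ inside the ball integral, or applying the divergence theorem pointwise, is not immediate. The cleanest route is to prove the identity first for smooth $f$ via mollification and then pass to the limit on both sides, exploiting the fact (cited from Bertsekas) that $f_h$ is already differentiable for nonsmooth $f$, together with dominated convergence under mild integrability of $f$. The Haar-invariance step in the first paragraph is, by contrast, essentially a standard consequence of the uniqueness of the rotation-invariant probability on $\SX^{d-1}$.
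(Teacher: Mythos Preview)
Your proposal is correct and follows essentially the same route as the paper: push forward the Haar measure on $O(d)$ to the uniform measure on $\SX^{d-1}$, use the symmetry $v\mapsto -v$ to collapse the central difference, and then apply a Stokes/divergence identity to identify the spherical integral with $\nabla f_h(x)$. The only notable difference is that the paper dispatches the Stokes step by citation (to Flaxman et al.\ and a real-analysis text) without discussing the regularity issue you flag for nonsmooth $f$; your mollification-and-limit argument is a reasonable way to make that step rigorous, and is more careful than what the paper actually writes.
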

 The proof of Lemma \ref{lem:smt_lemma} is provided in Appendix \ref{app:proof_smt_lemma}. 
  \begin{remark}\label{rem:other_diff}
     Note that Lemma \ref{lem:smt_lemma} holds also using $g^{F}_{(G,h)}$ or  $g^{S}_{(G,h)}$ defined as 
     \begin{equation*}
         \begin{aligned}
             g^{F}_{(G, h)}(x) := \frac{d}{\ell} \sum\limits_{i = 1}^\ell \frac{f(x + h G e_i) - f(x)}{h} G e_i \qquad \text{and} \qquad g^{S}_{(G, h)}(x) := \frac{d}{\ell} \sum\limits_{i = 1}^\ell \frac{f(x + h G e_i)}{h} G e_i,
         \end{aligned}
     \end{equation*}
     since $\E_G[g_{(G, h)}(x)] = \E_G[g^{F}_{(G, h)}(x)] = \E_G[g^{S}_{(G, h)}(x)]$. Despite these two estimators being computationally cheaper than the proposed one, we use central finite differences since they allow us to derive a better bound for $\E_G[\| g_{(G, h)}(x)\|^2]$ %
     as observed in \cite{shamir2017optimal} for the case $\ell=1$. 
 \end{remark}
 Thanks to Lemma \ref{lem:smt_lemma}, we can interpret each step of Algorithm \ref{alg:ozd} as a Stochastic Gradient Descent (SGD) on the smoothed function  $f_{h_k}$. But the analysis of the proposed algorithm does not follow from the SGD one for two reasons. First, the smoothing parameter $h_k$ changes along the iterations; second (and more importantly), the set of minimizers of  $f_h$ and $f$ are different in general.
 However, we will take advantage of the fact that $f_{h}$ can be seen as an approximation of $f$. The relationship between  $f$ and its approximation $f_h$ depends on the properties of $f$ - see Proposition \ref{prop:smt_props} in Appendix \ref{app:aux_results}.

 Our main contributions are the theoretical and numerical analysis of Algorithm \ref{alg:ozd} under different choices of the free parameters $\alpha_k, h_k$, namely the stepsize and the smoothing parameter. To the best of our knowledge, Algorithm~\ref{alg:ozd} is the first structured zeroth-order method for non-smooth optimization.%

\subsection{Related Work} 
In practice, the advantage of the use of structured directions in finite-difference methods has been observed in several applications \cite{str_zo_applied} and motivated their study. In \cite{Berahas2022}, the authors theoretically and empirically observed that structured directions provide a better approximation of the gradient with respect to unstructured (Gaussian and spherical) ones. Next, we review the most related works, showing the differences with our algorithm. %
\paragraph*{Unstructured Finite-differences.} Most of existing works focused on the theoretical analysis of methods using a single direction to approximate the gradient - see e.g. \cite{nesterov2017random,shamir2017optimal,gasnikov_sph,salimans2017evolution,flaxman2005online,ghadimi_lan,NEURIPS2022_a78f142a,duchi_power_of_two,chen2015randomized}.
The main results existing so far analyze the convergence of the function values in expectation. They provide convergence rates in terms of the number of function evaluations  and explicitly characterize the dependence on the dimension of the ambient space.\\
\textbf{Smooth setting:}
In \cite{nesterov2017random,duchi_power_of_two}, a finite difference algorithm with a single direction is analyzed. Rates on the expected function values are shown. In \cite{nesterov2017random} the dependence on the dimension in the rates is not optimal. In \cite{duchi_power_of_two}, both single and multiple direction cases are analyzed %
and lower bounds are derived. However, only the convex setting is considered. In \cite{ghadimi_lan}, both convex and non-convex settings are analyzed. They obtain optimal dependence on dimension in the complexity for convex functions. However, only the single-direction case is considered and only the smooth setting is analyzed. Comparing the result with our rates, Algorithm \ref{alg:ozd} achieves the same dependence on the dimension in the complexity taking $\ell$ as a fraction of $d$. Note that, by parallelizing the computation of the function evaluations, we can get a better result.\\
\textbf{Non-smooth setting:} In \cite{nesterov2017random,duchi_power_of_two} the non-smooth setting is also analyzed. However, only the single direction case has been considered and both algorithms do not achieve the lower bound. More precisely, for convex functions, a complexity of $\mathcal{O}(d^2 \varepsilon^{-2})$ is achieved by \cite{nesterov2017random} and $\mathcal{O}(d \log(d) \varepsilon^{-2})$ by \cite{duchi_power_of_two} while our algorithm gets the optimal dependence. Moreover, note that in \cite{duchi_power_of_two} the strategy adopted (also called "double smoothing") requires tuning one more sequence of parameters, which is a challenging problem in practice, and only the convex setting is considered. In \cite{nesterov2017random}, also the non-convex setting is analyzed by bounding the expected norm of the smoothed gradient. However, they obtain a complexity of $\mathcal{O}(d^3 \varepsilon^{-2} h^{-1})$ while our algorithm gets a better dependence on the dimension. In \cite{shamir2017optimal}, the optimal complexity is obtained for convex functions. However, the author does not analyze the non-convex setting. Moreover, note that, despite the complexity in terms of function evaluations being the same, our algorithm gets a better complexity in terms of the number of iterations since it uses multiple directions (and this is an advantage if we can parallelize the function evaluations). Furthermore, note that the method proposed in \cite{shamir2017optimal} can be seen as the special case of Algorithm \ref{alg:ozd} with $\ell = 1$. In \cite{NEURIPS2022_a78f142a} the single direction case is analyzed only in the non-convex setting. The dependence on the dimension of the complexity in the number of function evaluations achieved matches our result in this setting (again, in the number of iterations our method obtains a better dependence).

\paragraph*{Structured Finite-difference.}
In \cite{kozak2021zeroth,rando2022stochastic}, authors analyze structured finite differences in both deterministic and stochastic settings. However, only the smooth convex setting is considered. %
In \cite{str_zo_applied} orthogonal  matrices are used to build directions but no analysis is provided. In \cite{Grapiglia2023}, finite-difference with coordinate directions is analyzed. At every iteration, $d + 1$ function evaluations are performed to compute the estimator and only the smooth setting is considered. %

\section{Main Results}\label{sec:main_results}
In this section, we analyze Algorithm \ref{alg:ozd} considering both non-smooth and smooth problems. We present the rates obtained by our method  for convex and non-convex settings and compare them with those obtained by state-of-the-art methods. Proofs are provided in Appendix \ref{app:main_thm_proofs}. 
 In the following, we call {\it complexity in the number of iterations / function evaluations}, respectively, the number of iterations / function evaluations required to achieve an accuracy $\varepsilon \in (0, 1)$.

\subsection{Non-smooth Convex Setting}
In this section, we provide the main results for non-smooth convex functions. In particular, we will assume that the target function is convex and satisfy the following hypothesis.
\begin{assumption}[$L_0$-Lipschitz continuous]\label{asm:l_cont}
    The function $f$ is $L_0$-Lipschitz continuous; i.e., for some $L_0 > 0$,
    \begin{equation*}
        (\forall x,y \in \mathbb{R}^d) \qquad |f(x) - f(y)| \leq L_0 \| x - y \|.
    \end{equation*}
\end{assumption}
Note that this assumption implies that also $f_h$ is $L_0$-Lipschitz continuous - see Proposition \ref{prop:smt_props}. Moreover, to analyze the algorithm, we will consider the following parameter setting.%
\begin{assumption}[Zeroth-order non-smooth convergence conditions]\label{asm:nonsmt_param_conv}
    The step-size sequence $\alpha_k$ and the sequence of smoothing parameters $h_k$ satisfy the following conditions:
    \begin{equation*}
        \alpha_k \not\in \ell^{1}, \qquad \alpha_k^2 \in \ell^{1} \qquad \text{and} \qquad \alpha_k h_k \in \ell^{1}.
    \end{equation*}
\end{assumption}
The assumption above is required to guarantee convergence to a solution. In particular, the first two conditions 
are common for subgradient method and stochastic gradient descent,
while the third condition was already used in structured zeroth-order methods \cite{kozak2021zeroth,rando2022stochastic} and links the decay of the smoothing parameter with the stepsize's one. An example of $\alpha_k, h_k$ that satisfy Assumption \ref{asm:nonsmt_param_conv} is $\alpha_k = k^{- \theta}$ and $h_k = k^{-\rho}$ with $\theta \in (1/2, 1)$ and $\rho$ s.t. $ \theta + \rho > 1$.

We state now the main theorem for non-smooth convex functions.

\begin{theorem}[Non-smooth convex] \label{thm:nonsmooth_convex_rates}
Under Assumption \ref{asm:l_cont}, assume that $f$ is convex and let $(x_k)_{k\in\mathbb{N}}$ be a sequence generated by Algorithm \ref{alg:ozd}. For every $k\in\mathbb{N}$, denote $A_k=\sum_{i = 0}^k \alpha_i$ and set $\bar{x}_k := \sum_{i =0}^k \alpha_i x_i/A_k$. Then 
\begin{equation*}
\E[f(\bar{x}_k) - \min f] \leq {S_k}/{A_k} \qquad \text{with} \qquad S_k := \frac{\| x_0 - x^*\|^2}{2} + c \frac{d L_0^2}{\ell}\sum\limits_{i = 0}^k \alpha_i^2 + L_0 \sum\limits_{i = 0}^k \alpha_i h_i,
\end{equation*}
where $c > 0$ is a constant independent of the dimension and $x^*$ is any solution in $\argmin f$. 
Moreover, under Assumption \ref{asm:nonsmt_param_conv}, we have 
\begin{equation*}
\lim\limits_{k \to +\infty} f(x_k) = \min f \quad \text{a.s},
\end{equation*}
and that there exists a random variable $x^*$ taking values in $\argmin f$ s.t. $x_k \to x^*$ a.s.
\end{theorem}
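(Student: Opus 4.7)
The plan is to view Algorithm \ref{alg:ozd} as a stochastic subgradient-like method on the sequence of smooth surrogates $f_{h_k}$, using the Smoothing Lemma to identify $g_{(G_k,h_k)}(x_k)$ as an unbiased estimator of $\nabla f_{h_k}(x_k)$. First, I would expand the squared distance to an optimum $x^*\in\argmin f$ from the update rule:
\begin{equation*}
\|x_{k+1}-x^*\|^2 = \|x_k-x^*\|^2 - 2\alpha_k \scalT{g_{(G_k,h_k)}(x_k)}{x_k-x^*} + \alpha_k^2 \|g_{(G_k,h_k)}(x_k)\|^2 .
\end{equation*}
Taking conditional expectation with respect to the natural filtration $\mathcal{F}_k$ and invoking Lemma \ref{lem:smt_lemma} replaces the cross term by $\scalT{\nabla f_{h_k}(x_k)}{x_k-x^*}$. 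Since $f_{h_k}$ is convex (as the average of the convex $f$), this inner product is at least $f_{h_k}(x_k)-f_{h_k}(x^*)$. Using the fact that $0\le f_h(x)-f(x)\le L_0 h$ for convex Lipschitz $f$ (cf.\ Proposition \ref{prop:smt_props}), I get a lower bound of the form $f(x_k)-f(x^*)-L_0 h_k$.

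Next, the main technical step I expect is the second-moment bound $\E_G[\|g_{(G,h)}(x)\|^2]\le c\,dL_0^2/\ell$ with a constant $c$ independent of $d$ and $\ell$. The scaling $d/\ell$ rather than $d^2/\ell$ is the payoff of orthogonality of the columns of $G$: expanding $\|g_{(G,h)}(x)\|^2 = (d/\ell)^2 \sum_i \bigl((f(x+hGe_i)-f(x-hGe_i))/(2h)\bigr)^2$ (the cross terms vanishing by $\scalT{Ge_i}{Ge_j}=\delta_{ij}$), each squared finite difference is bounded by $L_0^2\|Ge_i\|^2 = L_0^2$; it then remains to control $\E_G[\sum_i(\cdots)^2]$ using rotational invariance of the Haar measure, giving the $d/\ell$ factor. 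Plugging these two estimates back yields the Robbins-Siegmund-type recursion
\begin{equation*}
\E\bigl[\|x_{k+1}-x^*\|^2\mid\mathcal{F}_k\bigr] \le \|x_k-x^*\|^2 - 2\alpha_k(f(x_k)-f(x^*)) + 2\alpha_k L_0 h_k + c\,\frac{dL_0^2}{\ell}\alpha_k^2 .
\end{equation*}

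Taking total expectation, telescoping from $0$ to $k$, rearranging, and applying Jensen's inequality to $\bar{x}_k$ via convexity of $f$ gives the non-asymptotic bound on $\E[f(\bar x_k)-\min f]$ with the claimed $S_k$.

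For the almost sure statements I would apply the Robbins-Siegmund supermartingale convergence theorem to the same recursion: since $\sum\alpha_k^2<\infty$ and $\sum\alpha_k h_k<\infty$ by Assumption \ref{asm:nonsmt_param_conv}, and $f(x_k)-f(x^*)\ge 0$, the theorem yields that $\|x_k-x^*\|^2$ converges a.s.\ (in particular $(x_k)$ is a.s.\ bounded) and that $\sum_k \alpha_k(f(x_k)-f(x^*))<\infty$ a.s. Combined with $\sum\alpha_k=\infty$ this forces $\liminf_k f(x_k)=\min f$ a.s., hence by Lipschitz continuity there is a subsequence $x_{k_j}\to \tilde x^*\in\argmin f$ a.s. Applying Robbins-Siegmund once more with this (random) $\tilde x^*$ in place of $x^*$ shows that $\|x_k-\tilde x^*\|^2$ converges a.s., and the limit must be $0$ because of the convergent subsequence; so $x_k\to \tilde x^*$ a.s., proving both the a.s.\ convergence of function values and of iterates. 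The delicate point here is that $\tilde x^*$ is a random variable, so one must argue measurably, e.g.\ by introducing a countable dense subset of $\argmin f$ or by a standard separability argument. The hardest step overall is the second-moment bound with the correct $d/\ell$ dependence, since this is precisely where the structured (orthogonal) nature of the directions enters and where the improvement over unstructured methods originates.
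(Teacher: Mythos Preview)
Your overall strategy matches the paper's: expand the squared distance, invoke Lemma~\ref{lem:smt_lemma} and convexity of $f_{h_k}$, use Proposition~\ref{prop:smt_props} to pass from $f_{h_k}$ to $f$, plug in the second-moment bound of Lemma~\ref{lem:approx_error}, telescope, and apply Jensen. The a.s.\ part via Robbins--Siegmund is also the paper's route; the paper packages the endgame as an Opial-type lemma (Lemma~\ref{lem:opial_like_lemma}) applied after establishing that $\lim_k\|x_k-x^*\|$ exists for every \emph{fixed} $x^*\in\argmin f$, which sidesteps the random-$\tilde x^*$ issue you raise, but your alternative via a separability argument would work as well.

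The one genuine gap is in your sketch of the second-moment bound. Orthogonality of the $Ge_i$ indeed kills the cross terms, but then the pointwise Lipschitz bound $|f(x+hGe_i)-f(x-hGe_i)|\le 2hL_0$ yields only $\E[\|g\|^2]\le (d/\ell)^2\cdot\ell\cdot L_0^2 = d^2L_0^2/\ell$, not $dL_0^2/\ell$, and ``rotational invariance of the Haar measure'' by itself cannot improve this. The missing ingredient is \emph{concentration of measure on the sphere}: since each $Ge_i$ is uniform on $\SX^{d-1}$ and $v\mapsto f(x+hv)$ is $hL_0$-Lipschitz, its variance is $O(h^2L_0^2/d)$ (Lemma~\ref{lem:conc_sphere}). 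Centering by $\gamma=\E_v[f(x+hv)]$ and exploiting the symmetry of the central difference then gives the extra factor $1/d$ per term, hence the claimed $c\,dL_0^2/\ell$. So the $d/\ell$ scaling is the combination of orthogonality (no cross terms) \emph{and} sphere concentration (variance $\propto 1/d$); attributing it to orthogonality alone, as you do, would leave you with the suboptimal $d^2/\ell$.
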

In the next corollary, we derive explicit rates for specific choices of the parameters. %
\begin{corollary}\label{cor:nonsmooth_convex_rates_1}
    Under the assumptions of Theorem~\ref{thm:nonsmooth_convex_rates}, let $x^* \in \argmin f$. Then, the following hold:
    \begin{itemize}
        \item[(i)]  Let $\theta \in (1/2, 1)$ and $\rho\in\mathbb{R}$ such that $\theta + \rho > 1$. For every $ k\in\mathbb{N}$, let $\alpha_k = \alpha  (k+1)^{-\theta}$ and $h_k = h (k+1)^{-\rho}$ with $\alpha>0$ and $h > 0$. Then 
    \begin{equation*}
        \E[f(\bar{x}_{k}) - \min f] \leq  \frac{C}{\alpha k^{1 - \theta}} + o \Big( \frac{1}{k^{1 - \theta}} \Big),
    \end{equation*}
    for some constant $C$ provided in the proof. 
    \item [(ii)] For every $k\in\mathbb{N}$, let $\alpha_k = \alpha$ and $h_k = h $ with $\alpha, h > 0$. Then
    \begin{equation*}
        \E[f(\bar{x}_{k}) - \min f] \leq  \frac{\|x_0 - x^* \|^2}{2 \alpha k} + \frac{c d L_0^2}{\ell}\alpha + L_0 h,
    \end{equation*}
    where $c$ is a constant independent of the dimension.
    \item [(iii)] Fix an accuracy $\varepsilon \in (0, 1)$ and let $K \geq 8 (cL_0^2\|x_0 - x^*\|^2) (d/\ell)\varepsilon^{-2}$. Set $\alpha_k = \sqrt{\frac{\ell}{d}} \frac{\|x_0 - x^* \| }{\sqrt{2cK} L_0},$  and $h_k=h\leq \varepsilon/(2L_0)$  for every $k\leq K$. Then 
    \begin{equation*}
    \E[f(\bar{x}_{K}) - \min f] \leq \varepsilon %
    \end{equation*}
     and the complexity in terms of number of iterations is $\mathcal{O}((d/\ell)\varepsilon^{-2})$. %
    \end{itemize}
\end{corollary}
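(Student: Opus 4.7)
The plan is to apply the bound $\E[f(\bar{x}_k) - \min f] \leq S_k/A_k$ from Theorem~\ref{thm:nonsmooth_convex_rates} directly in each case and carry out the relevant summations.

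For part (i), with $\alpha_k = \alpha(k+1)^{-\theta}$ and $h_k = h(k+1)^{-\rho}$, I would bound each component of $S_k/A_k$ by integral comparison. Since $\theta \in (1/2, 1)$, the denominator $A_k = \alpha \sum_{i=0}^k (i+1)^{-\theta}$ diverges like $\alpha k^{1-\theta}/(1-\theta)$. Since $2\theta > 1$ and $\theta + \rho > 1$, both $\sum_{i=0}^{\infty}\alpha_i^2$ and $\sum_{i=0}^{\infty}\alpha_i h_i$ converge, so $S_k$ is bounded above by a finite limit $S_\infty = \|x_0 - x^*\|^2/2 + c(dL_0^2/\ell)\sum_{i=0}^{\infty} \alpha_i^2 + L_0 \sum_{i=0}^{\infty} \alpha_i h_i$. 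Dividing yields $S_k/A_k \sim (1-\theta) S_\infty/(\alpha k^{1-\theta})$, giving the explicit constant $C = (1-\theta) S_\infty$; the $o(k^{-(1-\theta)})$ correction arises both from the tails of the convergent sums in $S_k$ and from the integral-comparison error in $A_k$.

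For part (ii), substituting $\alpha_k \equiv \alpha$ and $h_k \equiv h$ gives the closed forms $A_k = (k+1)\alpha$, $\sum_{i=0}^k \alpha_i^2 = (k+1)\alpha^2$, and $\sum_{i=0}^k \alpha_i h_i = (k+1)\alpha h$. Plugging these into $S_k/A_k$ and loosening $k+1$ to $k$ in the first summand yields the stated three-term inequality.

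For part (iii), I would start from the bound in part (ii). Choosing $h \leq \varepsilon/(2L_0)$ ensures $L_0 h \leq \varepsilon/2$, while the remaining expression $\|x_0 - x^*\|^2/(2\alpha K) + c(dL_0^2/\ell)\alpha$ is minimized in $\alpha$ at $\alpha^* = \sqrt{\ell/d}\,\|x_0 - x^*\|/(L_0 \sqrt{2cK})$, which matches the prescribed step-size. Substituting $\alpha^*$ yields the minimum value $L_0 \|x_0 - x^*\|\sqrt{2cd/(\ell K)}$, so requiring this to be at most $\varepsilon/2$ gives $K \geq 8 c L_0^2 \|x_0 - x^*\|^2 (d/\ell)\varepsilon^{-2}$ and hence complexity $\mathcal{O}((d/\ell)\varepsilon^{-2})$. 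The only place needing care is the $o(k^{-(1-\theta)})$ refinement in part (i), combining the integral-comparison error in $A_k$ with the tail behavior of $S_k - S_\infty$; parts (ii) and (iii) amount to direct substitution and an elementary one-variable optimization.
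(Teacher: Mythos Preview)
Your proposal is correct and follows essentially the same approach as the paper: apply the bound $S_k/A_k$ from Theorem~\ref{thm:nonsmooth_convex_rates}, evaluate the sums by integral comparison in~(i), direct substitution in~(ii), and minimize over $\alpha$ after fixing $h$ in~(iii). The only cosmetic difference is in part~(i): the paper estimates each summand of $S_k$ separately, isolating the leading constant $C_1=(1-\theta)\|x_0-x^*\|^2/2$ and writing the remaining two contributions as explicit lower-order terms, whereas you bound $S_k$ globally by its finite limit $S_\infty$ and absorb everything into a single constant $C=(1-\theta)S_\infty$; both are valid readings of ``some constant $C$ provided in the proof.''
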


\paragraph*{Discussion.} The bound in Theorem \ref{thm:nonsmooth_convex_rates} depends on the initialization and on an additional quantity that can be interpreted as an  approximation error. The latter is composed of two parts. The first one is generated by the approximation of the gradient of the smoothed function;  while the second, involving $h_k$, is generated by the smoothing. Since the rate depends on $1/\sum_{i = 0}^k \alpha_i$, we would like to choose the stepsize as large as possible. However, to get convergence, we need to make the approximation errors vanish sufficiently fast. To guarantee this, as we can observe from Theorem~\ref{thm:nonsmooth_convex_rates}, we need to impose some conditions on the stepsize $\alpha_k$ and on the smoothing parameter $h_k$ (i.e. Assumption \ref{asm:nonsmt_param_conv}), that will slow down the decay of the first term. In Corollary \ref{cor:nonsmooth_convex_rates_1}, we provide two choices of parameters: the first one satisfies Assumption \ref{asm:nonsmt_param_conv} and ensures convergence; the second one corresponds to constant stepsize and smoothing parameter. For the first choice, we recover the rate of the subgradient method in terms of $k$. In particular, for $\theta$ approaching $1/2$, the convergence rate is arbitrarily close to $O(k^{-1/2})$ and is similar to the one derived in \cite[Theorem 2]{duchi_power_of_two}. The dependence on the dimension depends on the choice of the constant $\alpha$. The optimal dependence is obtained with the choice $\alpha=\sqrt{\ell/d}$. Indeed, in that case, the complexity in the number of iterations is of the order $\mathcal{O}((d/\ell) \varepsilon^{-2})$, which is better than the one achieved by \cite[Theorem 2]{duchi_power_of_two} and \cite{nesterov2017random}. Note that also \cite[Corollary 1]{shamir2017optimal} and \cite[Theorem 2.4]{gasnikov_sph} obtain a worse complexity in terms of the number of iterations (since they use a single direction), but the same complexity in the number of function evaluations. Clearly, since multiple directions are used, a single iteration of O-ZD will be more expensive than one iteration of \cite{shamir2017optimal,gasnikov_sph}. However, our algorithm is more efficient if the $\ell$ function evaluations required at each iteration can be parallelized. On the more theoretical side, we observe that the advantage of multiple orthogonal directions is in the tighter bounds for the variance of the estimator, namely for $\E[\|g_{(G_k, h_k)}(x_k) \|^2]$ - see \cite[Lemma 5]{shamir2017optimal} and Lemma \ref{lem:approx_error}. 
\subsection{Non-smooth Non-convex Setting}\label{non_non}
To analyze the non-convex setting, following \cite{nesterov2017random}, we provide a bound on the averaged expected square norm of the gradient of the smoothed target. In particular, we use the following notation:
\begin{equation*}
    \eta^{(h)}_k := \left(\sum\limits_{i = 0}^k \alpha_i \E[\| \nabla f_{h}(x_i) \|^2]\right) / A_k, \qquad\qquad \text{where} \ \ A_k:=\sum\limits_{i = 0}^k \alpha_i.
\end{equation*}
Next, we state the main theorem for the non-convex non-smooth setting.
\begin{theorem}[Non-smooth non-convex]\label{thm:non_conv_non_smooth_rates}
    Under Assumption \ref{asm:l_cont}, let $(x_k)_{k\in\mathbb{N}}$ be a sequence generated by Algorithm \ref{alg:ozd} with, for every $k\in\mathbb{N}$, $h_k = h$ for some $h > 0$. Then
    \begin{equation*}
        \eta^{(h)}_k \leq {S_k}/A_k \quad \text{with} \quad S_k := f_{h}(x_0) - \min f + c \frac{L_0^3 d \sqrt{d}}{\ell} \sum\limits_{i = 0}^k \frac{\alpha_i^2}{h}. 
    \end{equation*}
\end{theorem}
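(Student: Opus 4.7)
The plan is to treat Algorithm~\ref{alg:ozd} as (biased-free) stochastic gradient descent on the smoothed surrogate $f_h$, to which a standard descent lemma can be applied. Since $h_k\equiv h$ is constant, $f_h$ is fixed along the iterations, and Lemma~\ref{lem:smt_lemma} gives $\E_{G_k}[g_{(G_k,h)}(x_k)\mid x_k]=\nabla f_h(x_k)$, so $g_{(G_k,h)}(x_k)$ is an unbiased stochastic gradient for $f_h$ at $x_k$. The two quantitative ingredients I need are:\ (i) $\nabla f_h$ is $L_h$-Lipschitz with $L_h\lesssim L_0\sqrt d/h$, which is a standard smoothing estimate (cf.\ Proposition~\ref{prop:smt_props} in the appendix); and (ii) a second-moment bound $\E[\|g_{(G_k,h)}(x_k)\|^2]\leq c\,dL_0^2/\ell$, which is exactly the content of the variance estimate (Lemma~\ref{lem:approx_error}) already used in the convex analysis. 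Both are direct consequences of Assumption~\ref{asm:l_cont} and do not require convexity, so they carry over unchanged.

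Given these, I would proceed as follows. First apply the descent lemma for $f_h$:
\begin{equation*}
f_h(x_{k+1})\leq f_h(x_k)+\langle\nabla f_h(x_k),x_{k+1}-x_k\rangle+\tfrac{L_h}{2}\|x_{k+1}-x_k\|^2.
\end{equation*}
Substituting $x_{k+1}-x_k=-\alpha_k g_{(G_k,h)}(x_k)$, taking conditional expectation with respect to the natural filtration generated by $G_0,\dots,G_{k-1}$, and using Lemma~\ref{lem:smt_lemma} together with the variance bound in (ii), I obtain
\begin{equation*}
\E[f_h(x_{k+1})\mid x_k]\leq f_h(x_k)-\alpha_k\|\nabla f_h(x_k)\|^2+\frac{L_h\,\alpha_k^2}{2}\cdot c\,\frac{dL_0^2}{\ell}.
\end{equation*}
Taking total expectation, rearranging, and telescoping from $i=0$ to $k$ yields
\begin{equation*}
\sum_{i=0}^k\alpha_i\,\E[\|\nabla f_h(x_i)\|^2]\leq f_h(x_0)-\E[f_h(x_{k+1})]+\frac{c\,L_h\,dL_0^2}{2\ell}\sum_{i=0}^k\alpha_i^2.
\end{equation*}

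To finish, I use two simple facts: pointwise $f_h(x)=\int f(x+hu)\,d\rho(u)\geq\min f$, so $\E[f_h(x_{k+1})]\geq\min f$; and substituting $L_h\leq c'L_0\sqrt d/h$ yields
\begin{equation*}
\sum_{i=0}^k\alpha_i\,\E[\|\nabla f_h(x_i)\|^2]\leq f_h(x_0)-\min f+\frac{c''\,L_0^3\,d\sqrt d}{\ell}\sum_{i=0}^k\frac{\alpha_i^2}{h}.
\end{equation*}
Dividing by $A_k=\sum_{i=0}^k\alpha_i$ gives exactly the bound on $\eta^{(h)}_k$ claimed in the theorem.

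The routine parts are the descent lemma and the telescoping. The only delicate point is making sure the two a~priori bounds --- the Lipschitz constant of $\nabla f_h$ scaling like $\sqrt d/h$ and the $dL_0^2/\ell$ second-moment bound for the orthogonal structured estimator --- hold \emph{without} convexity of $f$. For the first, the identity $\nabla f_h(x)=\frac{d}{h\,\vol(\BX^d)}\int_{\SX^{d-1}}f(x+hu)u\,dS(u)$ (Stokes-type calculation used to derive Lemma~\ref{lem:smt_lemma}) only needs $L_0$-Lipschitz continuity, so it is fine. For the second, the variance computation exploits only orthogonality of the columns of $G_k$ and the Lipschitz bound $|f(x+hGe_i)-f(x-hGe_i)|\leq 2hL_0$; again no convexity is used. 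Hence both inputs are legitimate here, and the argument above goes through.
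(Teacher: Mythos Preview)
Your proposal is correct and follows essentially the same route as the paper. The paper packages the descent step into an auxiliary lemma (Lemma~\ref{lem:fun_dec_nonsmooth}), whose proof is exactly your application of the descent lemma for $f_h$ combined with the smoothness constant $L_0\sqrt d/h$ from Proposition~\ref{prop:smt_props} and the second-moment bound from Lemma~\ref{lem:approx_error}; the remaining telescoping, the lower bound $f_h\geq\min f$, and the division by $A_k$ are identical.
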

In the next corollary, we derive the rates for specific choices of the parameters.
\begin{corollary}\label{cor:nonconv_nonsmooth_rates2}
    Under the assumptions of Theorem~\ref{thm:non_conv_non_smooth_rates}, the following statements hold.
    \begin{itemize}
    \item[(i)]  If  $\alpha_k = \alpha (k+1)^{-\theta}$ with $\alpha > 0$ and $\theta \in (1/2, 1)$, then
    \begin{equation*}
        \eta_k^{(h)} \leq C\frac{f_h(x_0) - \min f}{\alpha k^{1 - \theta}} + o \Big(\frac{1}{k^{1 - \theta}} \Big),
    \end{equation*}
    where $C$ is a constant independent of the dimension. 
    \item[(ii)] If $\alpha_k = \alpha$ with $\alpha > 0$ for every $k\in\mathbb{N}$, then
    \begin{equation*}
        \eta_k^{(h)} \leq \frac{f_h(x_0) - \min f}{\alpha k} + \frac{c L_0^3 d \sqrt{d}}{\ell h}\alpha,
    \end{equation*}
    where $c$ is a constant independent of the dimension.
    \item[(iii)] Let $\varepsilon \in (0,1)$, let $K \geq 4(f_h(x_0) - \min f) c L_0^3 d \sqrt{d} \varepsilon^{-2}/(\ell h)$ %
    and choose $\alpha = \sqrt{\frac{(f_h(x_0) - \min f) \ell h}{K c L_0^3 d \sqrt{d}}}$. Then %
    we have that $\eta_K^{(h)} \leq \varepsilon$. %
    Thus, the number of function evaluations required to get a precision $\eta_k^h \leq \varepsilon$ is of the order $\mathcal{O}(d\sqrt{d} h^{-1} \varepsilon^{-2})$.
    \end{itemize} 
\end{corollary}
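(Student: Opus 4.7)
The plan is to derive all three parts directly from Theorem \ref{thm:non_conv_non_smooth_rates} by plugging in the prescribed sequences $(\alpha_k)$ and $h_k = h$ and then controlling the two partial sums $A_k = \sum_{i=0}^k \alpha_i$ and $\Sigma_k := \sum_{i=0}^k \alpha_i^2$ that appear in the bound $\eta_k^{(h)}\leq S_k/A_k$. The heart of the argument is the same in all three cases: once $A_k$ and $\Sigma_k$ are computed, the bound is a quotient of two explicit expressions in $k$, and (iii) just optimizes this quotient over the free constant $\alpha$.

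For (i), I would use the standard integral comparison for $p$-series. With $\alpha_k=\alpha(k+1)^{-\theta}$ and $\theta\in(1/2,1)$, one has
\begin{equation*}
A_k \;=\; \alpha\sum_{i=0}^k (i+1)^{-\theta} \;=\; \frac{\alpha}{1-\theta}\,k^{1-\theta}\bigl(1+o(1)\bigr),
\end{equation*}
while $2\theta>1$, so $\Sigma_k = \alpha^2\sum_{i=0}^k (i+1)^{-2\theta}$ converges to a finite constant $\zeta_\theta:=\alpha^2\sum_{i\geq 0}(i+1)^{-2\theta}$ depending only on $\theta$ and $\alpha$. Substituting into $\eta_k^{(h)}\leq S_k/A_k$ gives
\begin{equation*}
\eta_k^{(h)} \;\leq\; \frac{1-\theta}{\alpha\,k^{1-\theta}}\Bigl(f_h(x_0)-\min f\Bigr) \;+\; \frac{(1-\theta)\,c L_0^3 d\sqrt{d}\,\zeta_\theta}{\alpha \ell h\,k^{1-\theta}}\bigl(1+o(1)\bigr),
\end{equation*}
so setting $C=1-\theta$ yields the leading term, and the rest is collected into the $o(k^{-(1-\theta)})$ remainder. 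Note $C$ is indeed dimension-free because the $d$-dependence sits entirely in the subdominant term; this is the only subtlety worth watching in (i).

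For (ii), with $\alpha_k=\alpha$ constant, $A_k=(k+1)\alpha$ and $\Sigma_k=(k+1)\alpha^2$, so Theorem \ref{thm:non_conv_non_smooth_rates} immediately gives
\begin{equation*}
\eta_k^{(h)} \;\leq\; \frac{f_h(x_0)-\min f}{(k+1)\alpha} \;+\; \frac{cL_0^3 d\sqrt{d}}{\ell h}\,\alpha,
\end{equation*}
which is (ii) after bounding $k+1\geq k$. For (iii), I would view the right-hand side of (ii) with $k=K$ as a function $\phi(\alpha)=A/(K\alpha)+B\alpha$ where $A=f_h(x_0)-\min f$ and $B=cL_0^3 d\sqrt{d}/(\ell h)$. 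Its minimum is attained at $\alpha^\star=\sqrt{A/(KB)}$, with minimal value $2\sqrt{AB/K}$. The prescribed $\alpha$ is exactly $\alpha^\star$, and the condition $K\geq 4AB/\varepsilon^2$ is equivalent to $2\sqrt{AB/K}\leq\varepsilon$, giving $\eta_K^{(h)}\leq\varepsilon$. Finally, since each iteration of Algorithm \ref{alg:ozd} performs $2\ell$ function evaluations, the number of function evaluations is $2\ell K=\mathcal{O}(d\sqrt{d}\,h^{-1}\varepsilon^{-2})$, as the factor $\ell$ cancels between the iteration count and the per-iteration cost.

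There is no real obstacle here; the only points requiring a little care are verifying that the constant $C$ in (i) does not depend on $d$ (tracked above), and checking the balance in (iii) so that both terms hit $\varepsilon/2$ at the chosen $\alpha$. Everything else is substitution into Theorem \ref{thm:non_conv_non_smooth_rates} and elementary calculus.
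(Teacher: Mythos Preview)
Parts (ii) and (iii) are correct and coincide with the paper's argument (substitute the constant step, then minimize $A/(K\alpha)+B\alpha$ over $\alpha$; the $\ell$ in the per-iteration cost cancels the $1/\ell$ in the iteration count).

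In part (i) there is a genuine gap. You compute the second contribution to $S_k/A_k$ as
\[
\frac{(1-\theta)\,cL_0^3 d\sqrt{d}\,\zeta_\theta}{\alpha \ell h}\cdot\frac{1}{k^{1-\theta}}\bigl(1+o(1)\bigr),
\]
which is $\Theta\bigl(k^{-(1-\theta)}\bigr)$ with a coefficient depending on $d$, and then assert that this term is ``subdominant'' and can be ``collected into the $o(k^{-(1-\theta)})$ remainder''. That is a contradiction: a nonvanishing multiple of $k^{-(1-\theta)}$ is, by definition, not $o\bigl(k^{-(1-\theta)}\bigr)$. With your estimate the $d$-dependence sits at the \emph{same} order as the leading term, so you have not established the inequality with a dimension-free $C$.

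The paper's proof parts ways with yours precisely here: it records the second term as decaying like $(k+1)^{-\theta}$, and since $\theta>1/2>1-\theta$ this would indeed be $o\bigl(k^{-(1-\theta)}\bigr)$. Your route via ``$\Sigma_k$ converges to $\zeta_\theta$'' can only produce $\Sigma_k/A_k\sim \alpha(1-\theta)\zeta_\theta\,k^{-(1-\theta)}$ and so cannot reach that faster decay. In fact your asymptotic for $\Sigma_k/A_k$ is sharp, which makes the paper's $(k+1)^{-\theta}$ rate for the second term look like a slip; but regardless of what the paper intended, the specific step you wrote---absorbing a displayed $\Theta\bigl(k^{-(1-\theta)}\bigr)$ term into $o\bigl(k^{-(1-\theta)}\bigr)$---is the error in your argument.
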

Relying on the results in \cite{NEURIPS2022_a78f142a}, we  show that this is related to a precise notion of approximate stationarity. To do so, we need to introduce a definition of subdifferential which is suitable to this setting. As shown in \cite{NEURIPS2022_a78f142a} the Clarke subdifferential is not the right notion, and the approximate Goldstein subdifferential should be used instead.

\begin{definition}[Goldstein subdifferential and stationary point] \label{def:goldstein_subdiff}
    Under Assumption \ref{asm:l_cont}, let $x \in \R^d$ and $h > 0$. The $h$-Goldstein subdifferential of $f$ at $x$ is $\partial_h f(x) := \text{conv}(\cup_{y \in \BX^d_h(x)} \partial f(y))$ where $\partial f$ is the Clarke subdifferential \cite{clarke_subdiff} and $\BX^d_h(x)$ is the ball centered in $x$ with radius $h$. For $\varepsilon \in (0, 1)$, a point $x \in \R^d$ is a $(h, \varepsilon)$-Goldstein stationary point for the function $f$ if $\min\{ \|g \| \, | \, g \in \partial_h f(x) \} \leq \varepsilon$.
\end{definition}

\begin{corollary}\label{cor:goldstein}
    Under the same assumptions of Theorem \ref{thm:non_conv_non_smooth_rates}, fix $K\in\mathbb{N}$ and let $I$ be a random variable taking values in $\{0,\ldots,K-1\}$ such that, for all $i$, $\mathbb{P}[I=i]=\alpha_i / A_{K-1}$. Let also $S_k$ be defined as in Theorem \ref{thm:non_conv_non_smooth_rates}. Then %
    \begin{equation*}
        \mathbb{E}_I\left[\min\{\|\eta\|^2\,:\, \eta\in \partial f_h(x_I)\}\right] \leq {S_K}/{A_K}.
    \end{equation*} 
    In the setting of Corollary~\ref{cor:nonconv_nonsmooth_rates2} (iii), we have also that  $\mathbb{E}_I\left[\min\{\|\eta\|^2\,:\, \eta\in \partial f_h(x_I)\}\right] \leq \varepsilon$.
\end{corollary}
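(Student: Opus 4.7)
The plan is to reduce this Goldstein-subdifferential statement to the gradient bound already established in Theorem~\ref{thm:non_conv_non_smooth_rates} by showing the inclusion $\nabla f_h(x)\in\partial_h f(x)$ for every $x\in\mathbb{R}^d$. Once this key inclusion is in hand, the minimum-norm element of $\partial_h f(x)$ is majorized by $\|\nabla f_h(x)\|$, so the bound on $\eta_k^{(h)}$ immediately gives the claim after rewriting the sum as an expectation over the index $I$.

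The first and main step is to verify that $\nabla f_h(x)$ lies in $\partial_h f(x)$. Since $f$ is $L_0$-Lipschitz (Assumption~\ref{asm:l_cont}), Rademacher's theorem guarantees that $f$ is differentiable almost everywhere; moreover, by Proposition~\ref{prop:smt_props}, $f_h$ is differentiable everywhere and its gradient can be written (a consequence of the uniform smoothing definition in \eqref{eqn:smt_target}) as the average
\begin{equation*}
\nabla f_h(x)=\frac{1}{\operatorname{vol}(\mathbb{B}^d)}\int_{\mathbb{B}^d}\nabla f(x+hu)\,du,
\end{equation*}
where the integrand exists for a.e.\ $u\in\mathbb{B}^d$ and is bounded in norm by $L_0$. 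Because each $\nabla f(x+hu)\in\partial f(x+hu)\subseteq\bigcup_{y\in\mathbb{B}_h^d(x)}\partial f(y)$, the integral is a limit of convex combinations of elements of this set and hence lies in its closed convex hull, which is exactly $\partial_h f(x)$ by Definition~\ref{def:goldstein_subdiff}. This inclusion is precisely the Goldstein-type property exploited in \cite{NEURIPS2022_a78f142a} to pass from smoothed-gradient guarantees to Goldstein-stationarity guarantees.

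The remaining steps are routine. From $\nabla f_h(x)\in\partial_h f(x)$ we deduce
\begin{equation*}
\min\{\|\eta\|^2\,:\,\eta\in\partial_h f(x)\}\leq \|\nabla f_h(x)\|^2\qquad\text{for every }x\in\mathbb{R}^d.
\end{equation*}
Applying this at $x=x_I$ and taking expectation with respect to the randomization $I$ (independent of the algorithm's randomness) yields
\begin{equation*}
\mathbb{E}_I\bigl[\min\{\|\eta\|^2\,:\,\eta\in\partial_h f(x_I)\}\bigr]
\leq \sum_{i=0}^{K-1}\frac{\alpha_i}{A_{K-1}}\,\mathbb{E}[\|\nabla f_h(x_i)\|^2]=\eta^{(h)}_{K-1},
\end{equation*}
and Theorem~\ref{thm:non_conv_non_smooth_rates} bounds this quantity by $S_K/A_K$ (up to the harmless index shift in $K$). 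The quantitative $\varepsilon$-statement then follows by plugging in the parameter choice of Corollary~\ref{cor:nonconv_nonsmooth_rates2}(iii), which already certifies $\eta^{(h)}_K\leq\varepsilon$.

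The main obstacle is the rigorous justification of the inclusion $\nabla f_h(x)\in\partial_h f(x)$: one must be careful that $\nabla f$ is only defined almost everywhere, and that the integral representation for $\nabla f_h$ genuinely yields an element of the closed convex hull $\operatorname{conv}\bigl(\bigcup_{y\in\mathbb{B}_h^d(x)}\partial f(y)\bigr)$. This is standard but nontrivial; the cleanest route is to approximate the integral by Riemann sums of Clarke subgradients at differentiability points and pass to the limit, using that $\partial f$ has closed graph and that the Goldstein subdifferential is closed and convex by construction.
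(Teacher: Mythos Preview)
Your proposal is correct and follows essentially the same approach as the paper: both reduce the claim to the inclusion $\nabla f_h(x)\in\partial_h f(x)$, then bound the minimum-norm element by $\|\nabla f_h(x)\|^2$ and invoke Theorem~\ref{thm:non_conv_non_smooth_rates} after rewriting the weighted sum as an expectation over $I$. The only difference is that the paper simply cites \cite[Theorem~3.1]{NEURIPS2022_a78f142a} for the inclusion, whereas you additionally sketch the integral-representation argument behind it; your handling of the index shift and of the $\varepsilon$-statement via Corollary~\ref{cor:nonconv_nonsmooth_rates2}(iii) also matches the paper.
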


\paragraph{Discussion.} In Theorem \ref{thm:non_conv_non_smooth_rates}, we fix the smoothing of the target, i.e. we consider $h_k$ constant, and we  analyze the non-smooth non-convex setting providing a rate on the expected norm of the smoothed gradient. The resulting bound is composed of two parts. The first part is very natural, and due to the functional value at the initialization. The second part is the approximation error. Recall that Assumption \ref{asm:l_cont} holds, and therefore $f_h \leq f + L_0 h$ due to  Proposition~\ref{prop:smt_props}. This suggests taking $h$ as small as possible in order to reduce the gap between $f_h$ and $f$.  However, taking $h$ too small would make the approximation error very big. In our analysis, we consider the case with $h$ constant. Moreover, as for the convex case, the speed of the rate depends on $A_k$ and so we would like to take the stepsize as large as possible. But to control the approximation error, we need to assume $\alpha_k^2 \in \ell^{1}$. In Corollary \ref{cor:nonconv_nonsmooth_rates2}, we consider two choices of stepsize. The first choice satisfies the property of $\alpha_k^2 \in \ell^{1}$, while the second one analyzes the case of constant step-size. %
Comparing our rate to the one in \cite{nesterov2017random} we see that  we obtain a better dependence on the dimension in the complexity, both in terms of iterations and function evaluations. %
Our results match the one of \cite[Theorem 3.2]{NEURIPS2022_a78f142a} in terms of rate and in terms of function evaluations. We get a better dependence on the dimension in the number of iterations. Note again that, despite the complexity in terms of the number of function evaluations being the same, the possibility of parallelization for the function evaluations yields a better result for our method. As for the convex setting, %
 we have a tighter upper-bound on the variance of the estimator of the smoothed gradient with respect to the dimension - see \cite[Lemma D.1]{NEURIPS2022_a78f142a}.  Goldstein stationarity has been used to assess the approximate stationarity for first-order methods as well, see \cite{davis}. The latter work shows that a cutting plane algorithm  achieves a rate of $\mathcal{O}(d\varepsilon^{-3})$ for Lipschitz functions. 

\subsection{Smooth Convex setting}
We consider now the smooth setting, i.e. we assume that the target function satisfies the following hypothesis.
\begin{assumption}[$L_1$-Smooth]\label{asm:l_smooth}
    The function $f$ is $L_1$-smooth; i.e. the function $f$ is differentiable and, for some $L_1 > 0$,
    \begin{equation*}
        (\forall x,y \in \mathbb{R}^d) \qquad \| \nabla f(x) - \nabla f(y) \| \leq L_1 \| x - y \|.
    \end{equation*}
\end{assumption}
This is the standard assumption for analyzing first-order methods and has been used in many other works in the literature for zeroth-order algorithms - see e.g. \cite{nesterov2017random,duchi_power_of_two}. As shown in previous works, if $f$ satisfies Assumption \ref{asm:l_smooth} then also $f_h$ satisfies it - see Proposition \ref{prop:smt_props}. We will consider also the following assumptions on the stepsize and the smoothing in order to guarantee convergence.
\begin{assumption}[Smooth  zeroth-order convergence conditions]\label{asm:bounded_stepsize}
    The stepsize sequence $(\alpha_k)_{k\in\mathbb{N}}$ and the smoothing sequence $(h_k)_{k\in\mathbb{N}}$ satisfy the following conditions:
    \begin{equation*}
        \alpha_k \not\in \ell^{1} \quad \text{and} \quad \alpha_k h_k \in \ell^{1}.
    \end{equation*}
    Moreover, %
        $\alpha_k \leq \bar{\alpha} < {\ell}/{dL_1}$ for every $k\in\mathbb{N}$. %
\end{assumption}
Note that this is a weaker version of Assumption \ref{asm:nonsmt_param_conv}. Next, we state the main theorem for convex smooth functions.
\begin{theorem}[Smooth convex]\label{thm:rates_convex_smooth}
    Under Assumptions \ref{asm:l_smooth} and \ref{asm:bounded_stepsize}, let $(x_k)_{k\in\mathbb{N}}$ be a sequence generated by Algorithm \ref{alg:ozd} and $x^* \in \argmin f$.
    For every $k\in\mathbb{N}$, set $A_k=\sum_{i = 0}^k \alpha_i$  and $\bar{x}_k = \sum\limits_{i = 0}^k \alpha_i x_i /A_k$.  
    Then, for every $k\in\mathbb{N}$,
    \begin{equation*}        \begin{aligned}
        \E[f(\bar{x}_k) - \min f] \leq \frac{D_k}{A_k} \quad \text{with} \quad D_k := \frac{\ell \Delta + d \bar{\alpha}}{2 \ell \Delta} \Big( S_k + \sum\limits_{i = 0}^k \rho_i \Big( \sqrt{S_i} + \sum\limits_{j = 0}^i \rho_j \Big) \Big),
    \end{aligned}
    \end{equation*}
    where    $ S_k := \|x_0 - x^*\|^2 + \sum_{i = 0}^k \frac{L_1^2 d^2}{2 \ell} \alpha_i^2 h_i^2,\,$ $\rho_k := L_1 d \alpha_k h_k,$ and $\,\Delta := \Big( \frac{1}{L_1} - \frac{d}{\ell}\bar{\alpha} \Big)$.
\end{theorem}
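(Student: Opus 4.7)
The plan is to adapt the classical analysis of smooth stochastic gradient descent, using Lemma~\ref{lem:smt_lemma} to interpret each step of O-ZD as an unbiased SGD step on $f_{h_k}$, and then to transfer the bound back from $f_{h_k}$ to $f$ through the smoothness-dependent approximation error. I would first expand
\begin{equation*}
\|x_{k+1}-x^*\|^2 = \|x_k-x^*\|^2 - 2\alpha_k \langle g_{(G_k,h_k)}(x_k), x_k-x^*\rangle + \alpha_k^2 \|g_{(G_k,h_k)}(x_k)\|^2,
\end{equation*}
and take conditional expectation over $G_k$. Lemma~\ref{lem:smt_lemma} turns the linear term into $-2\alpha_k\langle \nabla f_{h_k}(x_k), x_k-x^*\rangle$. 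For the quadratic term I would invoke the structured-variance estimate (the smooth analogue of the bound used in the non-smooth proofs), which, combined with $L_1$-smoothness and $\|\nabla f(x_k)\|^2\leq 2L_1(f(x_k)-\min f)$, yields a bound of the form $(2dL_1/\ell)(f(x_k)-\min f)+O(L_1^2 d^2 h_k^2/\ell)$.

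Next, convexity of $f_{h_k}$ (inherited from $f$ via the averaging definition of the smoothing) gives the Bregman-type inequality $\langle \nabla f_{h_k}(x_k),x_k-x^*\rangle\geq f_{h_k}(x_k)-f_{h_k}(x^*)$, and Proposition~\ref{prop:smt_props} lets me transfer $f_{h_k}$ to $f$ via $|f_{h_k}-f|\leq L_1 h_k^2/2$. The $-2\alpha_k(f(x_k)-\min f)$ descent contribution absorbs the variance term thanks to Assumption~\ref{asm:bounded_stepsize}, since $2\alpha_k-(2dL_1/\ell)\alpha_k^2 \geq 2\alpha_k L_1(1/L_1-(d/\ell)\bar\alpha)=2\alpha_k L_1\Delta$ with $\Delta>0$. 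The algebraic identity $\ell\Delta+d\bar\alpha=\ell/L_1$ then explains the prefactor $(\ell\Delta+d\bar\alpha)/(2\ell\Delta)=1/(2L_1\Delta)$ appearing in $D_k$. At this stage I have a one-step recursion
\begin{equation*}
\E_k\|x_{k+1}-x^*\|^2 \leq \|x_k-x^*\|^2 - 2L_1\Delta\alpha_k\bigl(f(x_k)-\min f\bigr) + 2\rho_k\|x_k-x^*\| + \sigma_k^2,
\end{equation*}
where the term $\rho_k\|x_k-x^*\|$ is produced by applying Cauchy--Schwarz to $\langle\nabla f_{h_k}(x_k)-\nabla f(x_k),x_k-x^*\rangle$ together with the gradient-bias estimate $\|\nabla f_{h_k}-\nabla f\|\leq L_1 h_k$ from Proposition~\ref{prop:smt_props}, and $\sigma_k^2$ collects the pieces that assemble into $S_k$.

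The main obstacle is then producing the nested structure in $D_k$. Summing the recursion from $i=0$ to $k$ and dropping the nonnegative telescoping remainder yields
\begin{equation*}
\sum_{i=0}^k 2L_1\Delta\alpha_i\E[f(x_i)-\min f] \leq S_k + 2\sum_{i=0}^k \rho_i\,\E\|x_i-x^*\|,
\end{equation*}
so I need an a priori bound on $\E\|x_i-x^*\|$. For this I would apply a discrete Gronwall-type lemma directly to the iterate recursion, which has the shape $u_{i+1}^2\leq u_i^2+2\rho_i u_i + \sigma_i^2$: such a lemma gives $u_i\leq \sqrt{S_i}+\sum_{j=0}^i \rho_j$. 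Substituting this estimate back produces exactly the nested combination $\rho_i(\sqrt{S_i}+\sum_{j\leq i}\rho_j)$ that appears in the statement; this two-pass argument is the conceptual heart of the proof and the part that requires care, since it is what forces $D_k$ to depend both on $S_k$ and on the running partial sums of $\rho_i$.

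Finally, Jensen's inequality applied to $f$ and the weights $\alpha_i/A_k$ gives $f(\bar x_k)-\min f\leq \sum_{i=0}^k \alpha_i(f(x_i)-\min f)/A_k$, so dividing the summed estimate by $2L_1\Delta\cdot A_k$ yields the stated bound $\E[f(\bar x_k)-\min f]\leq D_k/A_k$. The remaining technicality is bookkeeping the constants to match the exact form $(\ell\Delta+d\bar\alpha)/(2\ell\Delta)$, which is routine given the identity $\ell\Delta+d\bar\alpha=\ell/L_1$ noted above.
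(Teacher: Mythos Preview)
Your outline is close in spirit but diverges from the paper at the point where you handle the variance term, and that divergence is not merely cosmetic. You propose to bound $\alpha_k^2\E_k\|g_k\|^2$ via Lemma~\ref{lem:approx_error}(ii) and then absorb $\frac{2d}{\ell}\alpha_k^2\|\nabla f(x_k)\|^2$ into the descent term using $\|\nabla f(x_k)\|^2\le 2L_1(f(x_k)-\min f)$. Carried out correctly this gives a coefficient $2\alpha_k\bigl(1-\tfrac{2dL_1}{\ell}\alpha_k\bigr)$ in front of $f(x_k)-\min f$, not $2\alpha_k\bigl(1-\tfrac{dL_1}{\ell}\alpha_k\bigr)$; your display ``$2\alpha_k-(2dL_1/\ell)\alpha_k^2$'' silently drops a factor $2$ coming from the inequality $\|\nabla f\|^2\le 2L_1(f-\min f)$. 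The consequence is that your route needs $\bar\alpha<\ell/(2dL_1)$ and produces $\Delta'=\tfrac{1}{L_1}-\tfrac{2d}{\ell}\bar\alpha$, not the stated $\Delta=\tfrac{1}{L_1}-\tfrac{d}{\ell}\bar\alpha$. The identity $\ell\Delta+d\bar\alpha=\ell/L_1$ is true, but it does not help you recover the constant you actually obtain, so the ``routine bookkeeping'' you defer does not close.

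The paper avoids this loss by a two-pass argument: it keeps $\frac{2d}{\ell}\alpha_k^2\|\nabla f(x_k)\|^2$ separate, uses convexity $\langle\nabla f(x_k),x_k-x^*\rangle\ge f(x_k)-f(x^*)$ for the descent term, sums, and then invokes the auxiliary Lemma~\ref{lem:bihari}. That lemma bounds $\sum_i\alpha_i\E\|\nabla f(x_i)\|^2$ by $\tfrac{1}{2\Delta}(S_k+\sum_i\rho_i u_i)$ via the Baillon--Haddad cocoercivity inequality $\langle\nabla f(x_k),x_k-x^*\rangle\ge\tfrac{1}{L_1}\|\nabla f(x_k)\|^2$, which is strictly sharper than what you get by chaining convexity with $\|\nabla f\|^2\le 2L_1(f-\min f)$. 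Substituting this bound is what produces the prefactor $\tfrac{\ell\Delta+d\bar\alpha}{2\ell\Delta}$ with the correct $\Delta$. The Bihari step you describe for $u_i\le\sqrt{S_i}+\sum_{j\le i}\rho_j$ is the same as the paper's. Two smaller points: the gradient-bias estimate from Proposition~\ref{prop:smt_props} is $\|\nabla f_h-\nabla f\|\le L_1hd/2$, not $L_1h$, and once you split $\nabla f_{h_k}=\nabla f+(\nabla f_{h_k}-\nabla f)$ and apply Cauchy--Schwarz, the function-value transfer $|f_{h_k}-f|\le L_1h_k^2/2$ that you also mention is not used (those are two alternative ways to pass from $f_{h_k}$ to $f$, and the paper uses only the gradient splitting).
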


\begin{corollary}\label{cor:conv_smooth_rates1}
    Under the same Assumptions of Theorem~\ref{thm:rates_convex_smooth}, the following hold.
    \begin{itemize}
        \item[(i)] If for every $k\in\mathbb{N}$ we set $\alpha_k = \alpha>0$ and $h_k = h (k+1)^{-\theta}$ for $h>0$ and $\theta > 1$, then
        \begin{equation*}
    \E[f(\bar{x}_k) - \min f] \leq  \frac{C}{\alpha k}, %
    \end{equation*}
    where $C$ is a constant provided in the proof. Moreover, if $\alpha < \ell/(2dL_1)$, $\lim\limits_{k \to \infty} f(x_k) = \min f$ a.s. and there exists a random variable $\hat{x}$ taking values in $\argmin f$ s.t. $x_k \to \hat{x}$ a.s.     
    \item[(ii)] If for every $k\in\mathbb{N}$ we set $\alpha_k = \alpha>0$ and $0< h_k \leq h$, then
    \begin{equation*}
    \E[f(\bar{x}_k) - \min f] \leq \frac{C_1}{k} + C_2 \alpha h + C_3 \alpha^2 h^2 \sqrt{k} + C_4 \alpha^2 h^2 k,
    \end{equation*}
    where $C_1, C_2, C_3$ and $C_4$ are non-negative constants.
    \end{itemize}
\end{corollary}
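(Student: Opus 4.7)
The plan is to specialize the general bound of Theorem~\ref{thm:rates_convex_smooth} to each of the two parameter regimes in (i) and (ii) and to estimate each of the pieces $S_k$, $\sum_i \rho_i$ and the iterated sum appearing in $D_k$. Since $\alpha_k=\alpha$ in both cases, $A_k=\alpha(k+1)$, so the prefactor $1/A_k$ will always produce a $1/(\alpha k)$ decay; the shape of the remaining terms depends entirely on how fast $h_k$ (and hence $\rho_k=L_1 d\alpha h_k$) vanishes.

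For (i), with $h_k = h(k+1)^{-\theta}$ and $\theta>1$, I would use that both $\alpha_k^2 h_k^2 = \alpha^2 h^2 (k+1)^{-2\theta}$ and $\rho_k = L_1 d\alpha h (k+1)^{-\theta}$ are summable (the former because $2\theta>1$, the latter because $\theta>1$). Consequently $S_k$ is uniformly bounded by some $\bar S<\infty$ and $\sum_{i=0}^k\rho_i\leq\bar R<\infty$. Plugging these uniform bounds into $D_k$ shows that $D_k\leq C$ for a constant $C$ independent of $k$ (but depending explicitly on $\|x_0-x^*\|$, $L_1$, $d$, $\ell$, $\alpha$, $h$ and $\theta$), and dividing by $A_k$ yields the announced $C/(\alpha k)$ rate.

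For (ii), with $h_k\leq h$ constant, each $\rho_i$ is bounded by $L_1 d\alpha h$ while $S_k\leq\|x_0-x^*\|^2+\tfrac{L_1^2d^2}{2\ell}\alpha^2h^2(k+1)$, so $\sqrt{S_i}\leq\|x_0-x^*\|+C\alpha h\sqrt{i+1}$ by subadditivity of the square root. Substituting these bounds term by term, $S_k$ contributes $\|x_0-x^*\|^2+O(\alpha^2 h^2 k)$, the cross sum $\sum_i\rho_i\sqrt{S_i}$ contributes $O(\alpha h\, k)+O(\alpha^2 h^2 k^{3/2})$, and the double sum $\sum_i\rho_i\sum_{j\leq i}\rho_j$ contributes $O(\alpha^2 h^2 k^2)$. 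Dividing by $A_k=\alpha(k+1)$ produces exactly the four-term bound of the statement; the explicit constants are obtained by tracking the prefactor $(\ell\Delta+d\bar\alpha)/(2\ell\Delta)$ together with the Lipschitz and dimension factors.

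The most delicate part is the almost-sure convergence claim in (i). My plan is to re-open the one-step inequality underlying Theorem~\ref{thm:rates_convex_smooth}, which should admit, after taking conditional expectation with respect to the natural filtration $\mathcal{F}_k$, a Robbins--Siegmund type recursion of the form $\mathbb{E}[\|x_{k+1}-x^*\|^2\mid\mathcal{F}_k]\leq(1+u_k)\|x_k-x^*\|^2-v_k(f(x_k)-\min f)+w_k$, with $\sum_k u_k<\infty$ and $\sum_k w_k<\infty$ almost surely. The strengthened stepsize constraint $\alpha<\ell/(2dL_1)$ ensures the descent coefficient $v_k$ is strictly positive and uniform in $k$, while $\theta>1$ guarantees $\alpha_k h_k\in\ell^{1}$ so that the cumulative smoothing bias in $w_k$ is finite. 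Robbins--Siegmund then yields a.s.\ convergence of $\|x_k-x^*\|^2$ and $\sum_k\alpha_k(f(x_k)-\min f)<\infty$ a.s.; combined with $\sum_k\alpha_k=\infty$, continuity of $f$ and boundedness of the iterates, this forces $f(x_k)\to\min f$ a.s.\ and, extracting a converging subsequence, delivers a random limit point $\hat x\in\argmin f$; quasi-Fej\'er monotonicity of $\|x_k-x^*\|^2$ then upgrades subsequential to full convergence. The main obstacle I expect is carving out that clean descent inequality in which, despite the bias from replacing $f$ with $f_h$, the coefficient of $f(x_k)-\min f$ stays strictly positive---this is precisely what the reinforced upper bound on $\bar\alpha$ is there to enforce.
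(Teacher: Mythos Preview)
Your proposal is correct and follows essentially the same route as the paper. For the rate bounds in (i) and (ii) the paper does exactly what you describe: it plugs the specific parameter choices into the quantities $S_k$, $\rho_k$ and $D_k$ from Theorem~\ref{thm:rates_convex_smooth}, uses the summability of $(k+1)^{-\theta}$ and $(k+1)^{-2\theta}$ in case (i) to bound $D_k$ uniformly, and divides by $A_k=\alpha(k+1)$; case (ii) is handled by the same term-by-term estimates you outline (the paper in fact does not spell (ii) out in detail). For the almost-sure convergence in (i), the paper simply invokes Lemma~\ref{lem:smt_convergence}, whose proof is precisely the Robbins--Siegmund plus Opial/quasi-Fej\'er argument you sketch, with the strengthened bound $\alpha<\ell/(2dL_1)$ entering through the Baillon--Haddad inequality to keep the descent coefficient positive; so your plan there is correct and already carried out elsewhere in the paper.
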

\paragraph*{Discussion.} As in the previous cases, the bound in Theorem \ref{thm:rates_convex_smooth} is composed by two terms: the error due to the initialization and the one due to the approximation. An important difference with the results in the non-smooth setting is that every term in the approximation error is decreasing with respect to the  smoothing parameter $h_k$. This allows obtaining convergence also with the constant step-size scheme, taking $h_k \in \ell^{1}$. In Corollary \ref{cor:conv_smooth_rates1}\ (i), we recover the result of \cite[Theorem 5.4]{kozak2021zeroth} with a specific choice of parameters $\alpha, h$ (up to constants). The complexity depends on the choice of $\alpha$. Note that by Assumption \ref{asm:bounded_stepsize}, $\alpha < \ell/(L_1 d)$ thus the dependence on the dimension in the rate will be at least $d/\ell$. In particular, taking $\alpha = \ell/(2dL_1)$, we obtain the optimal complexity of  $\mathcal{O}(d \varepsilon^{-1} )$ in terms of function evaluations. This result has a better dependence on the dimension than %
\cite{nesterov2017random}. In %
Corollary \ref{cor:conv_smooth_rates1} \ (ii), the dependence on the dimension in the complexity depends on the choice of $\alpha$ and $h$. %
Moreover, the rate obtained is equal (up to constants) to the rate obtained in \cite{kozak2021zeroth} in the same setting, i.e. $\mathcal{O}(1/k)$ (in which we hide the dependence on $d$ and $\ell$). As for \cite{kozak2021zeroth}, for the first setting we can prove the almost sure convergence of the iterates.

\subsection{Smooth Non-Convex setting}
To analyze the smooth non-convex setting, we introduce the following notation:
\begin{equation*}
 (\forall k\in\mathbb{R}^d) \qquad A_k:=\sum\limits_{i = 0}^k \alpha_i,\qquad  \eta_k := \left( \sum\limits_{i = 0}^k \alpha_i \E[\| \nabla f(x_i) \|^2] \right) / A_k.
\end{equation*}
Note that, in comparison with the quantity defined in Section \ref{non_non}, here $\eta_k$ is related to the exact objective function $f$ and not to its smoothed version $f_h$. Next, we state the main result for smooth non-convex functions.
\begin{theorem}[Smooth non-convex]\label{thm:rates_nonconv_smooth}
    Suppose that Assumption \ref{asm:l_smooth} holds and assume that, for every $k\in\mathbb{N}$, $\alpha_k \leq \bar{\alpha} < \ell / (2 d L_1)$. Let $(x_k)_{k\in\mathbb{N}}$ be a sequence generated by Algorithm \ref{alg:ozd}.  Then
    \begin{equation*}
 \begin{aligned}   
 \eta_k \leq \frac{1}{\Delta A_k} \Bigg( f(x_0) - \min f +  \frac{L_1^2 d^2}{8}\sum\limits_{i = 0}^k \alpha_i h_i^2 + \frac{L_1^3 d^2}{4\ell}\sum\limits_{i=0}^k\alpha_i^2 h_i^2 \Bigg), \qquad \Delta := \Big( \frac{1}{2} - \frac{L_1 d}{\ell}\bar{\alpha} \Big).
         \end{aligned}
    \end{equation*}
\end{theorem}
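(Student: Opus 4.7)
}
The plan is to run a standard descent-lemma argument, but where the usual unbiased gradient estimator is replaced by $g_{(G_k,h_k)}(x_k)$, whose expectation is $\nabla f_{h_k}(x_k)$ rather than $\nabla f(x_k)$. I would first use Assumption~\ref{asm:l_smooth} and the update rule $x_{k+1}=x_k-\alpha_k g_{(G_k,h_k)}(x_k)$ to write the one-step inequality
\begin{equation*}
f(x_{k+1}) \leq f(x_k) - \alpha_k \scal{\nabla f(x_k)}{g_{(G_k,h_k)}(x_k)} + \frac{L_1 \alpha_k^2}{2} \|g_{(G_k,h_k)}(x_k)\|^2,
\end{equation*}
then take expectation conditional on $x_k$ and apply Lemma~\ref{lem:smt_lemma} to replace the expected estimator by $\nabla f_{h_k}(x_k)$. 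At this point we do not get a clean $\|\nabla f(x_k)\|^2$ term, only $\scal{\nabla f(x_k)}{\nabla f_{h_k}(x_k)}$, so the next step is to control the smoothing bias.

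Next I would write $\scal{\nabla f(x_k)}{\nabla f_{h_k}(x_k)} = \|\nabla f(x_k)\|^2 + \scal{\nabla f(x_k)}{\nabla f_{h_k}(x_k) - \nabla f(x_k)}$ and apply Young's inequality with weight $1/2$ to obtain
\begin{equation*}
-\alpha_k \scal{\nabla f(x_k)}{\nabla f_{h_k}(x_k)} \leq -\tfrac{\alpha_k}{2}\|\nabla f(x_k)\|^2 + \tfrac{\alpha_k}{2}\|\nabla f_{h_k}(x_k) - \nabla f(x_k)\|^2.
\end{equation*}
The bias term $\|\nabla f_{h_k}(x_k) - \nabla f(x_k)\|$ is then bounded using the smoothness-side estimate in Proposition~\ref{prop:smt_props}, which under Assumption~\ref{asm:l_smooth} gives a bound of the order $L_1 d h_k$ (up to constants), producing the $\tfrac{L_1^2 d^2}{8}\alpha_i h_i^2$ contribution in $S_k$.

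For the second-moment term $\E[\|g_{(G_k,h_k)}(x_k)\|^2\mid x_k]$ I would invoke the variance lemma that appears earlier in the paper (the smooth version of Lemma~\ref{lem:approx_error}), which controls it by a term of the form $\tfrac{2d}{\ell}\|\nabla f(x_k)\|^2$ plus an $O(L_1^2 d^2 h_k^2/\ell)$ remainder. Plugging this into the descent inequality, the $\|\nabla f(x_k)\|^2$ coming from the variance combines with the $-\tfrac{\alpha_k}{2}\|\nabla f(x_k)\|^2$ from Young to give a coefficient $-\alpha_k(\tfrac{1}{2} - \tfrac{L_1 d \alpha_k}{\ell})$, which by the step-size restriction $\alpha_k\leq\bar\alpha<\ell/(2dL_1)$ is at most $-\alpha_k\Delta$ with $\Delta=\tfrac12-\tfrac{L_1 d\bar\alpha}{\ell}>0$. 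The remainder from the variance term contributes $\tfrac{L_1^3 d^2}{4\ell}\alpha_i^2 h_i^2$ to $S_k$.

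Finally I would take total expectation, sum from $i=0$ to $k$ so the $f(x_{i+1})-f(x_i)$ telescopes, use $f(x_{k+1})\geq\min f$, divide by $\Delta A_k$, and identify the right-hand side with the stated $\eta_k$. The main obstacle I anticipate is bookkeeping the exact constants: in particular, making sure that the combination of the Young split, the smoothness estimate on $\|\nabla f_{h_k}-\nabla f\|$ from Proposition~\ref{prop:smt_props}, and the second-moment bound from Lemma~\ref{lem:approx_error} yields precisely $\tfrac{L_1^2 d^2}{8}$ and $\tfrac{L_1^3 d^2}{4\ell}$ (and not off by small factors), and that the step-size restriction produces exactly $\Delta=\tfrac12-\tfrac{L_1 d\bar\alpha}{\ell}$ rather than a weaker constant. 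Everything else is routine once these two bounds are in hand.
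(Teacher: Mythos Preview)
Your proposal is correct and follows essentially the same route as the paper: the paper's proof simply invokes Lemma~\ref{lem:fval_dec_con_smooth} (which establishes the one-step descent inequality via the Descent Lemma, Lemma~\ref{lem:smt_lemma}, Lemma~\ref{lem:approx_error}(ii), and the bias bound from Proposition~\ref{prop:smt_props}) and then telescopes, bounds $f(x_{k+1})\geq \min f$, and divides by $\Delta A_k$. The only cosmetic difference is that the paper handles the cross term $-\alpha_k\scal{\nabla f(x_k)}{\nabla f_{h_k}(x_k)-\nabla f(x_k)}$ via Cauchy--Schwarz followed by Young, whereas you apply Young directly; both yield exactly the $\tfrac{L_1^2 d^2}{8}\alpha_k h_k^2$ term, so your constant worries are unfounded.
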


\begin{corollary}\label{cor:nonconv_smooth_rates}
    Under the assumptions of Theorem \ref{thm:rates_nonconv_smooth}, the following hold.
        \begin{itemize}
        \item[(i)] If $\alpha_k = \alpha \leq \bar{\alpha}$ and $h_k = h k^{-\theta}$ with $h>0$ and $\theta > 1$, then 
        \begin{equation*}
             \eta_k \leq \left[\frac{ f(x_0) - \min f}{\Delta\alpha} + \frac{C_1 d^2h^2}{\Delta}+ \frac{C_2 \alpha h^2 d^2}{\Delta \ell}\right]  \cdot \frac{1}{k},
        \end{equation*}
        where $C_1$ and $C_2$ are constants provided in the proof.
        \item[(ii)] If $\alpha_k = \alpha \leq \bar{\alpha}$ and $h_k = h> 0$, then
        \begin{equation*}
            \eta_k \leq \frac{f(x_0) - \min f}{\Delta \alpha k} + \frac{C_1 d^2 h^2}{\Delta} + \frac{C_2 \alpha h^2 d^2}{\Delta\ell},
        \end{equation*}
        where $C_1$ and $C_2$ are constants provided in the proof.
    \end{itemize}
\end{corollary}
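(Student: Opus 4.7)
The plan is direct substitution into the inequality from Theorem \ref{thm:rates_nonconv_smooth}, followed by an elementary bound on the resulting series. Both parts share the same structure: with constant stepsize $\alpha_k=\alpha$, one has $A_k=\alpha(k+1)\geq \alpha k$, so the first term in the theorem's bound immediately yields the contribution $(f(x_0)-\min f)/(\Delta\alpha k)$. What remains is to estimate the two sums $\sum_{i=0}^k \alpha_i h_i^2$ and $\sum_{i=0}^k \alpha_i^2 h_i^2$ appearing in the numerator, divide by $\Delta A_k$, and collect the dimension factors into the constants $C_1$ and $C_2$.

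For part (i), with $h_k=h(k+1)^{-\theta}$ and $\theta>1$, one has $2\theta>2$, so the series $\sum_{i\geq 0}(i+1)^{-2\theta}$ converges to a finite value $M_\theta$ (e.g.\ $M_\theta=\zeta(2\theta)$). Thus $\sum_{i=0}^k \alpha h_i^2 \leq \alpha h^2 M_\theta$ and $\sum_{i=0}^k \alpha^2 h_i^2 \leq \alpha^2 h^2 M_\theta$. Dividing by $\Delta A_k\geq \Delta\alpha k$ and setting $C_1:=L_1^2 M_\theta/8$, $C_2:=L_1^3 M_\theta/4$ gives the claimed $1/k$ decay together with the stated constants multiplying $d^2 h^2/\Delta$ and $\alpha h^2 d^2/(\Delta\ell)$ respectively.

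For part (ii), with $h_k=h$ constant, the two sums are simply $\alpha h^2(k+1)$ and $\alpha^2 h^2(k+1)$. Dividing by $\Delta A_k=\Delta\alpha(k+1)$, the factor $(k+1)$ cancels, so the second and third contributions become constant in $k$, yielding $C_1 d^2 h^2/\Delta$ and $C_2\alpha h^2 d^2/(\Delta\ell)$ with the same $C_1=L_1^2/8$ and $C_2=L_1^3/4$ (up to a harmless factor from $(k+1)/k$ which can be absorbed). Only the initialization term $(f(x_0)-\min f)/(\Delta\alpha k)$ continues to vanish in $k$, exactly as in the statement.

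There is essentially no obstacle here: the only mildly careful point is that in part (i) one needs $\theta>1$ (rather than $\theta>1/2$) precisely to make $\sum (i+1)^{-2\theta}$ summable, since $h_k$ enters Theorem \ref{thm:rates_nonconv_smooth} quadratically; this is why the corollary assumes $\theta>1$. All other manipulations are one-line algebraic simplifications of the bound in Theorem \ref{thm:rates_nonconv_smooth}, with the constants $C_1,C_2$ collecting the $L_1$-dependence and the summability constant $M_\theta$.
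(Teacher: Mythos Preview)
Your proposal is correct and matches the paper's approach: direct substitution into Theorem~\ref{thm:rates_nonconv_smooth}, bounding the two series $\sum_i \alpha_i h_i^2$ and $\sum_i \alpha_i^2 h_i^2$, and dividing by $\Delta A_k$ (the paper uses the integral-comparison bound $\sum_{i\geq 1} i^{-2\theta}\leq \tfrac{2\theta}{2\theta-1}$ instead of $\zeta(2\theta)$, giving $C_1=\tfrac{L_1^2\theta}{4(2\theta-1)}$ and $C_2=\tfrac{L_1^3\theta}{2(2\theta-1)}$ in part~(i), but this is inessential). One minor slip in your commentary: the claim that $\theta>1$ (rather than $\theta>1/2$) is what is needed for summability of $\sum(i+1)^{-2\theta}$ is incorrect, since that series already converges for any $\theta>1/2$; the argument you give therefore works under the weaker hypothesis as well, and the assumption $\theta>1$ in the statement is not forced by this step.
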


\paragraph*{Discussion.} 
As for the convex case, every term in the approximation error depends on the smoothing parameter $h_k$. In Corollary \ref{cor:nonconv_smooth_rates} \ (i), we take constant step-size and $h_k \in \ell^{1}$. With this choice of parameters, we get a rate of $\mathcal{O}(1/k)$ %
which matches with the result obtained by \cite{nesterov2017random}. The dependence on the dimension depends on the choice of $\alpha$ and $h$. Note that $\alpha < \ell/(2dL_1)$, thus taking $h = \mathcal{O}(1/d)$, in the rate we get a dependence on the dimension of $d/\ell$. Taking for instance $\alpha = \ell/(3dL_1)$ and $h = \mathcal{O}(1/d)$, we get a complexity of $\mathcal{O}(d \eps^{-1} )$ in terms of function evaluations.

\section{Numerical Results}\label{sec:num_results}
In this section, we provide some numerical experiments to assess the performances of our algorithm. We consider two target functions: a convex smooth one and a convex non-smooth one. Details on target functions and parameters of the algorithms are reported in Appendix \ref{app:exp_details}. To report our findings, we run the experiments $10$ times and provide the mean and standard deviation of the results.%
\paragraph*{How to choose the number of directions?} %
In these experiments, we set a fixed budget of $4000$ function evaluations and we consider $d = 50$. We investigate how the performance of Algorithm \ref{alg:ozd} changes as the value of $\ell$ increases. In Figure \ref{fig:change_l}, we observe the mean sequence $f(x_k) - f(x^*)$ after each function evaluation. If $\ell>1$, then the target function values are repeated $2 \ell$ times, since we need to perform $2 \ell$ function evaluations to do one iteration. For a sufficiently large budget, increasing the number of directions $\ell$ leads to better results compared to using a single direction in both smooth and non-smooth settings.
\begin{figure}[H]
    \centering
\includegraphics[width=0.7\linewidth]{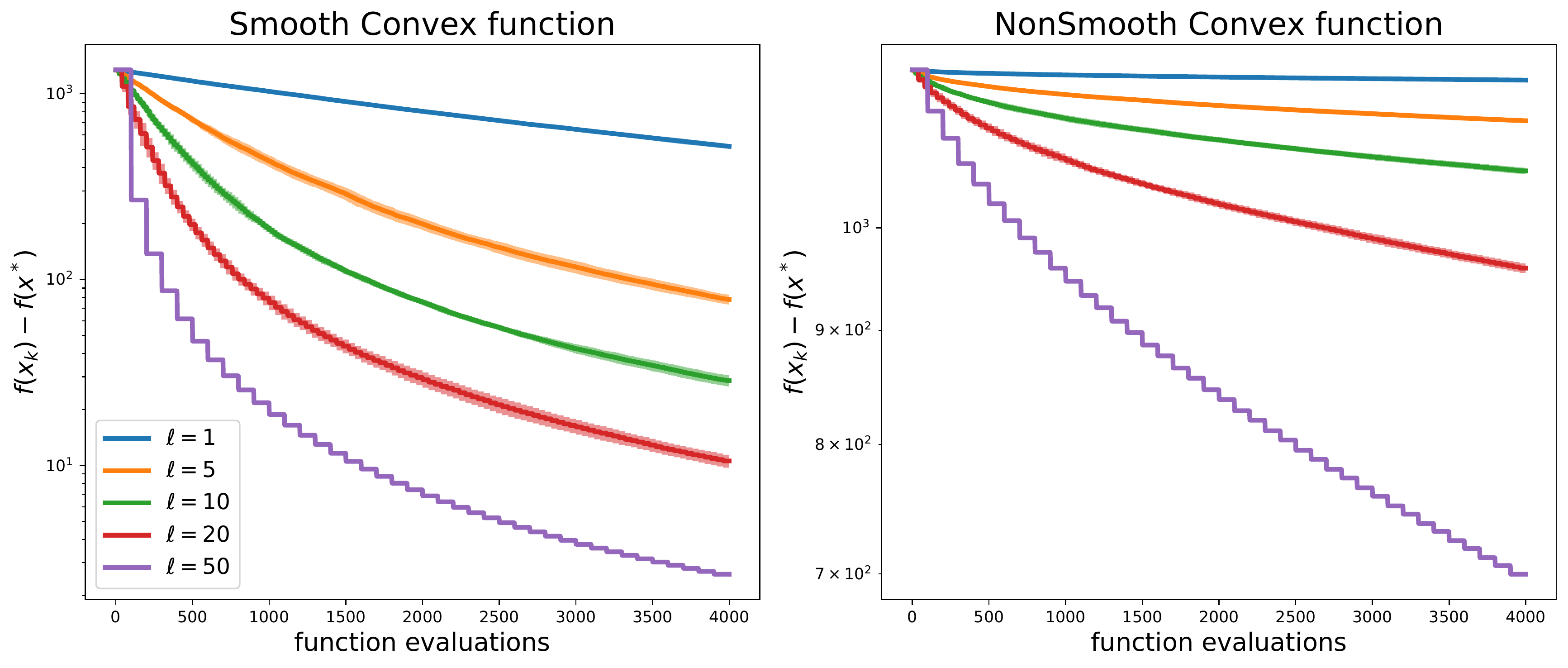}
    \caption{From left to right, function values per function evaluation in optimizing smooth and non-smooth target functions with different numbers of directions.}
    \label{fig:change_l}
\end{figure}

\paragraph*{Comparison with finite-difference methods.}
Now, we compare Algorithm \ref{alg:ozd} with other finite-difference methods. More precisely, we consider finite differences with single (and multiple) Gaussian (and spherical) directions. The budget of function evaluations is $1000$ and the ambient dimension is $d = 10$. For multiple direction methods, we fix the number of directions $\ell = d$. Further experiments are provided in Appendix \ref{app:other_experiments}.
\begin{figure}[H]
    \centering
\includegraphics[width=0.35\linewidth]{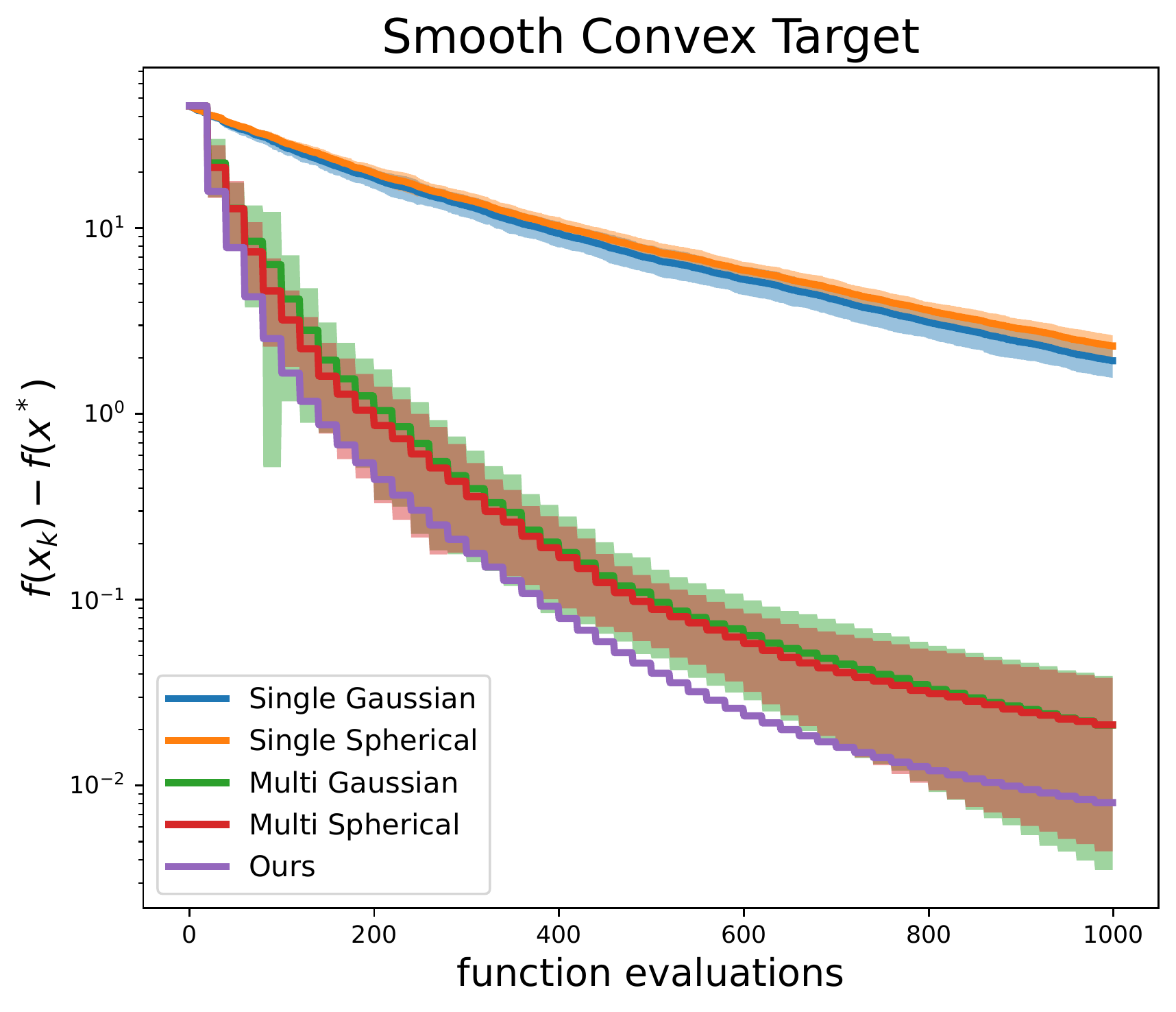}
\includegraphics[width=0.35\linewidth]{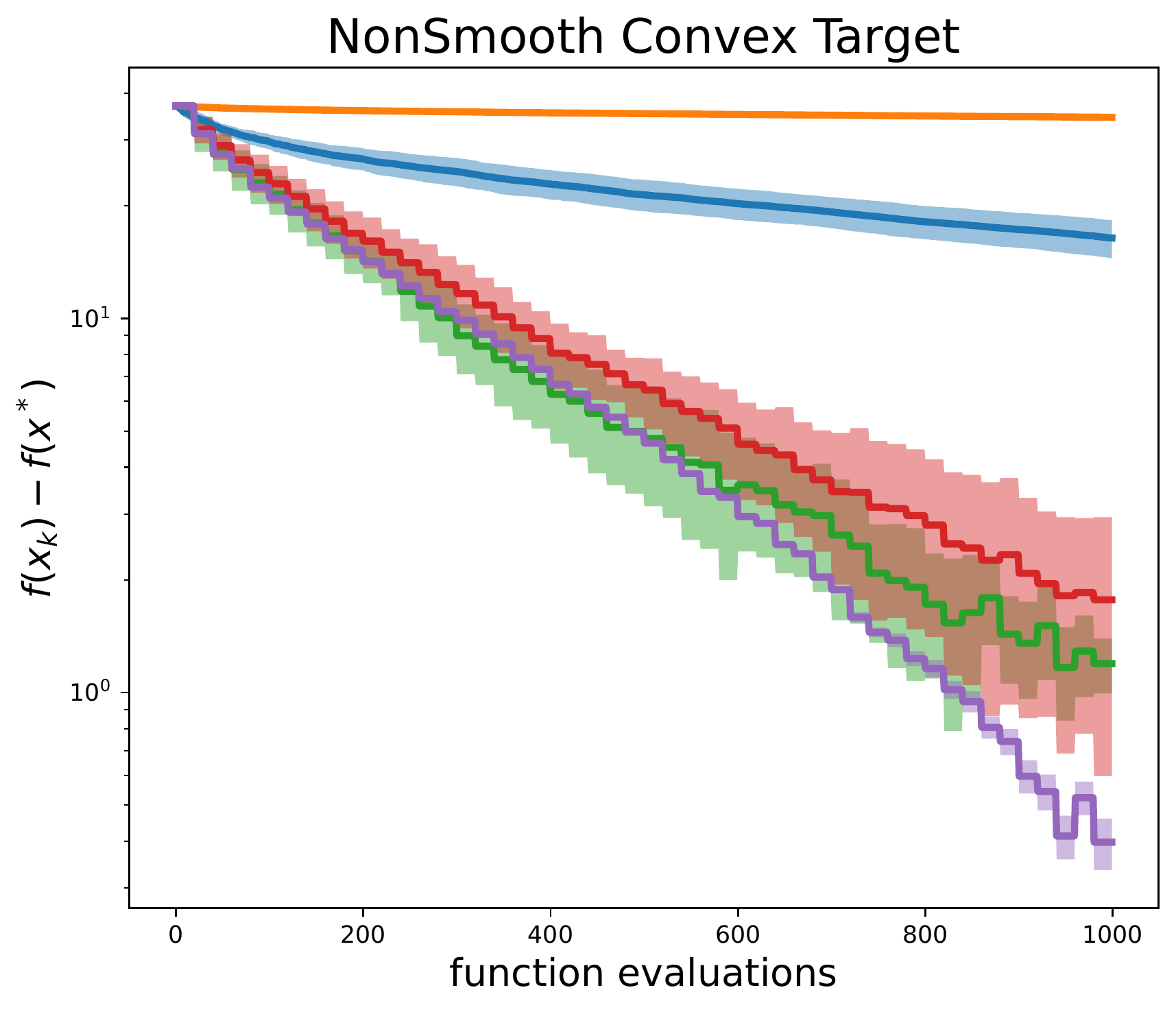}%
    \caption{From left to right, function values per function evaluation in optimizing smooth and non-smooth convex functions with different finite-difference algorithms.}
    \label{fig:comp_fd}
\end{figure}
In Figure \ref{fig:comp_fd}, we plot the sequence $f(x_k) - f(x^*)$ with respect to the number of function evaluations.
While in terms of rates and complexity the different algorithms are the same, Algorithm \ref{alg:ozd} shows better performances than random directions approaches, and we believe this is  due to the use of structured (i.e. orthogonal) directions. Indeed, orthogonal directions yield a better approximation of first-order information with respect to other methods. 
The practical advantages of structured directions were already observed in \cite{kozak2021zeroth,rando2022stochastic,Berahas2022,str_zo_applied} and these experiments confirm that the good practical behavior holds even in the nonsmooth setting. 

\section{Conclusion}\label{sec:conclusions}
We introduced and analyzed O-ZD a zeroth-order algorithm for non-smooth zeroth-order optimization. We analyzed the algorithm and derived rates for non-smooth and smooth functions. %
This work opens different research directions. An interesting one would be the introduction of a learning procedure for the orthogonal directions. Such an approach could have significant practical applications.

\paragraph*{Acknowledgment.}
This project has been supported by the TraDE-OPT project, which received funding from the European Union’s Horizon 2020 research and innovation program under the Marie Skłodowska-Curie grant agreement No 861137. L. R. and M. R. acknowledge the financial support of the European Research Council (grant SLING 819789), the AFOSR projects FA9550-18-1-7009 (European Office of Aerospace Research and Development), the EU H2020-MSCA-RISE project NoMADS - DLV-777826, and the Center for Brains, Minds and Machines (CBMM), funded by NSF STC award CCF-1231216. S. V. and L. R. acknowledge the support of the AFOSR project FA8655-22-1-7034.  The research by S. V. and C. M. has been supported by the MIUR Excellence Department Project awarded to Dipartimento di Matematica, Università di Genova, CUP D33C23001110001. S. V. and C. M. are members of the Gruppo Nazionale per l’Analisi Matematica, la Probabilità e le loro Applicazioni (GNAMPA) of the Istituto Nazionale di Alta Matematica (INdAM). This work represents only the view of the authors. The European Commission and the other organizations are not responsible for any use that may be made of the information it contains.

\bibliographystyle{plain}
\bibliography{bibliography}

\newpage
\appendix

\section{Auxiliary Results}\label{app:aux_results}
In this appendix, we state and collect lemmas and propositions required to prove the main results. 

\paragraph{Notation.} In the following sections, we denote with  $\mathcal{F}_k$ the filtration $\sigma(G_1, \cdots, G_{k - 1})$. Moreover, to simplify the notation, we define $g_k$ as the gradient surrogate in eq.\eqref{eqn:surrogate_det} at time-step $k$ i.e. $g_k := g_{(G_k, h_k)} (x_k)$ and $g(\cdot) := g_{(G, h)}(\cdot)$ for an arbitrary $G \in O(d)$ and $h >0$. We denote the normalized Haar measure \cite{mattila_1995} by $\mu$. We define the unit ball $\BX^d$ and the unit sphere $\SX^{d - 1}$ as follow
\begin{equation*}
    \BX^{d} := \{ v \in \R^d \, | \, \| v \| \leq 1 \} \qquad \text{and} \qquad  \SX^{d - 1} := \{ v  \in \R^d \, | \, \| v \| = 1 \}.
\end{equation*}
 We denote by $\sigma$ and $\sigma_N$ the spherical measure and the normalized spherical measure on $\SX^{d - 1}$, respectively. Moreover, we denote with $I_{d, \ell} \in \R^{d \times \ell}$ the (truncated) identity matrix.

\begin{lemma}\label{lem:sph_exp}
    Let $\beta(\SX^{d - 1})$ be the surface area of $\SX^{d - 1}$ and let $I \in \mathbb{R}^{d \times d}$ be the identity matrix. Then,
    \begin{equation*}
        \int_{\SX^{d - 1}} v v^\intercal \, d\sigma(v) = \frac{\beta(\SX^{d - 1})}{d} I.
    \end{equation*}
\end{lemma}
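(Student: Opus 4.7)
}

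The plan is to exploit the rotational invariance of the spherical measure $\sigma$ on $\SX^{d-1}$ and then pin down the scalar by a trace computation. Set $M := \int_{\SX^{d-1}} vv^\intercal \, d\sigma(v) \in \R^{d \times d}$. Observe that $M$ is symmetric and positive semidefinite by construction. The key step is to show that $M$ commutes with every orthogonal matrix: for any $Q \in O(d)$, the change of variable $v \mapsto Qv$ leaves $\sigma$ invariant, so
\begin{equation*}
QMQ^\intercal = \int_{\SX^{d-1}} (Qv)(Qv)^\intercal \, d\sigma(v) = \int_{\SX^{d-1}} vv^\intercal \, d\sigma(v) = M,
\end{equation*}
i.e.\ $QM = MQ$ for all $Q \in O(d)$. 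A standard Schur-lemma-type argument (or equivalently, choosing $Q$ to be permutations of coordinates and sign flips) then forces $M = cI$ for some scalar $c \geq 0$.

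To identify $c$, I would take the trace on both sides: on the one hand $\trace(cI) = cd$, and on the other hand
\begin{equation*}
\trace(M) = \int_{\SX^{d-1}} \trace(vv^\intercal) \, d\sigma(v) = \int_{\SX^{d-1}} \|v\|^2 \, d\sigma(v) = \int_{\SX^{d-1}} 1 \, d\sigma(v) = \beta(\SX^{d-1}),
\end{equation*}
using $\|v\|=1$ on the sphere and the definition of $\beta(\SX^{d-1})$ as the spherical surface area. Equating the two yields $c = \beta(\SX^{d-1})/d$, which is the claimed identity.

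There is no real obstacle here: the only subtlety is justifying the $O(d)$-invariance of $\sigma$, which is a standard property of the spherical measure (it is the unique $O(d)$-invariant Borel measure on $\SX^{d-1}$ up to normalization, whence also the pushforward under $Q$ agrees with $\sigma$). If one prefers to avoid invoking Schur's lemma, the same conclusion follows entry-wise: for $i \neq j$ the map $v_i \mapsto -v_i$ preserves $\sigma$ and sends $v_iv_j$ to $-v_iv_j$, so the off-diagonal entries of $M$ vanish; and for the diagonal, all $\int v_i^2 \, d\sigma$ are equal by coordinate permutation symmetry, so each equals $\beta(\SX^{d-1})/d$ since $\sum_i v_i^2 = 1$.
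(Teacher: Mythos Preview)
Your argument is correct and complete: the $O(d)$-invariance of $\sigma$ forces $M$ to be a scalar multiple of the identity, and the trace computation fixes the scalar as $\beta(\SX^{d-1})/d$. The entry-wise alternative you sketch (sign flips kill off-diagonal terms, coordinate permutations equalize the diagonal) is equally valid and perhaps the cleanest route.

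By way of comparison, the paper does not actually prove this lemma: it simply cites \cite[Lemma 7.3, point (b)]{Gao2018}. So your proposal is strictly more informative than what appears in the paper---you supply a short, self-contained argument where the authors defer to an external reference. There is nothing to correct or add.
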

\begin{proof}
    This result is proved in \cite[Lemma 7.3, point (b)]{Gao2018}.
\end{proof}
\begin{lemma}\label{lem:conc_sphere}
Let $\phi : \R^d \to \R$ be a $L$-Lipschitz function . If $u$ is uniformly distributed on  $\SX^{d - 1}$, then
    \begin{equation*}
         (\E[\phi(u) - \E[\phi(u)]])^2 \leq c \frac{L^2}{d},
    \end{equation*}
    for some numerical constant $c > 0$.
\end{lemma}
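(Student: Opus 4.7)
\textbf{Proof plan for Lemma \ref{lem:conc_sphere}.}

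I interpret the displayed inequality as the variance bound $\E[(\phi(u)-\E[\phi(u)])^2]\le cL^2/d$, since the literal expression $(\E[\phi(u)-\E[\phi(u)]])^2$ is identically zero; this is the standard concentration-of-measure statement on the sphere and it is clearly the quantity needed elsewhere in the paper (to bound the variance of $\phi(u)=f(x+hu)$ appearing in the estimator). The plan is to derive the bound from Lévy's concentration inequality for Lipschitz functions on the unit sphere.

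First, I would invoke Lévy's inequality in its standard form: if $\phi:\R^d\to\R$ is $L$-Lipschitz and $u$ is uniformly distributed on $\SX^{d-1}$, then there exists an absolute constant $c_0>0$ such that for every $t>0$,
\begin{equation*}
\P\bigl[\,|\phi(u)-\E[\phi(u)]|\ge t\,\bigr]\;\le\;2\exp\!\bigl(-c_0\, d\, t^2/L^2\bigr).
\end{equation*}
This is classical and can be cited from any reference on concentration (e.g.\ Ledoux's monograph or Boucheron--Lugosi--Massart); it follows either from the isoperimetric inequality on the sphere or from a log-Sobolev / spectral-gap argument. Note the Lipschitz constant of the restriction of $\phi$ to $\SX^{d-1}$ is at most $L$.

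Next I would convert the tail bound into a second-moment bound via the layer-cake formula:
\begin{equation*}
\E\bigl[(\phi(u)-\E[\phi(u)])^2\bigr]
\;=\;\int_0^{\infty}2t\,\P\bigl[\,|\phi(u)-\E[\phi(u)]|\ge t\,\bigr]\,dt
\;\le\;\int_0^{\infty}4t\exp\!\bigl(-c_0 d\, t^2/L^2\bigr)\,dt.
\end{equation*}
The remaining integral is a Gaussian-type integral that evaluates to $2L^2/(c_0 d)$, producing the announced bound with numerical constant $c=2/c_0$.

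There is no substantial obstacle: the main step is simply quoting Lévy's inequality correctly and performing the integration. An alternative (and essentially equivalent) route is to apply the Poincaré inequality on $\SX^{d-1}$ with sharp constant $1/(d-1)$, which directly gives $\operatorname{Var}(\phi(u))\le L^2/(d-1)\le 2L^2/d$ for $d\ge 2$, sidestepping the tail-bound computation altogether. I would choose Lévy's inequality as the presentation since it is the reference most commonly cited in zeroth-order optimization papers.
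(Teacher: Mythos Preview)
Your interpretation of the statement as the variance bound $\E[(\phi(u)-\E[\phi(u)])^2]\le cL^2/d$ is correct (the displayed parentheses are misplaced), and your proof via L\'evy's concentration inequality followed by layer-cake integration is valid and standard. The paper's own proof simply defers to Lemma~9 of Shamir (2017), where this same concentration argument is used, so your approach coincides with what the paper invokes.
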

\begin{proof}
    The proof follows the same line as  \cite[Lemma 9]{shamir2017optimal}.
\end{proof}

\subsection{Smoothing Lemma \& Properties}\label{app:proof_smt_lemma}
In this appendix, we provide the proof of the Smoothing Lemma (i.e. Lemma \ref{lem:smt_lemma}).
\paragraph{Proof of Smoothing Lemma.}
By eq. \eqref{eqn:surrogate_det},
\begin{equation*}
    \E_G[g_{(G, h)}(x)] = \frac{d}{\ell} \sum\limits_{i = 1}^\ell \int_{O(d)} \frac{f(x + h  G e_i) - f(x - h  G e_i)}{2h} G e_i \, d\mu(G).
\end{equation*}
By \cite[Theorem 3.7]{mattila_1995}, %
\begin{equation*}
    \E_G[g_{(G, h)}(x)] = \frac{d}{2 \ell h} \sum\limits_{i = 1}^\ell \int_{\SX^{d-1}}  (f(x + h v^{(i)}) - f(x - h  v^{(i)})) v^{(i)} \, d\sigma_N(v^{(i)}).
\end{equation*}
Since $v^{(i)}$ is uniformly distributed on the sphere, which is symmetric with respect to the origin, we have
\begin{equation*}
    \E_G[g_{(G, h)}(x)] =
\frac{d}{\ell h} \sum\limits_{i = 1}^\ell \int_{\SX^{d - 1}} f(x + h v^{(i)})v^{(i)} \, d\sigma_N(v^{(i)}).
\end{equation*}
As a consequence of Stokes' Theorem (details in \cite[Lemma 1]{flaxman2005online} and \cite[Theorem A8.8]{Alt2016}), we get
\begin{equation*}
    \E[g_{(G, h)} (x)]  = \frac{1 }{\ell} \sum\limits_{i = 1}^\ell \nabla f_{h}(x) \qquad \text{with} \qquad f_{h} (x) := \frac{1}{\vol{\BX^d}} \int_{\BX^{d}} f(x + h u) \, du.
\end{equation*}
Rearranging terms,  we get the claim. \hfill $\qed$

\begin{proposition}[Smoothing properties]\label{prop:smt_props}
 Let $f_h$ be the smooth approximation of $f$ defined in eq. \eqref{eqn:smt_target}. Then the following hold: \\
 If $f$ is convex then $f_h$ is convex and, for every $x \in \R^d$,
 \begin{equation*}
     f(x) \leq f_h(x).
 \end{equation*}
 If $f$ is $L_0$-Lipschitz continuous - i.e. $\forall x, y\in \R^{d}$, $|f(x) - f(y)| \leq L_0 \| x - y \|$, then $f_h$ is $L_0$-Lipschitz continuous, differentiable and for every $x,y \in \R^d$
 \begin{equation*}
     \| \nabla f_h(x) - \nabla f_h(y) \| \leq \frac{L_0 \sqrt{d}}{h} \| x - y \| \quad \text{and}\quad f_h(x) \leq f(x) + L_0 h.
 \end{equation*}
 If $f$ is $L_1$-smooth - i.e. $f$ is differentiable and $\forall x, y\in \R^{d}$, $\|\nabla f(x) - \nabla f(y)\| \leq L_1 \| x - y \|$ then $f_h$ is $L_1$-smooth and for every $x \in \R^d$,
 \begin{equation*}
    \| \nabla f_h(x) - \nabla f(x) \| \leq \frac{h d L_1}{2}  \quad \text{and}\quad f_h(x) \leq f(x) + \frac{L_1}{2} h^2.
 \end{equation*}

\end{proposition}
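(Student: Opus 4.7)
The plan is to address the three parts in turn, exploiting that $f_h$ is a ball-average of $f$ and, crucially, the spherical gradient representation $\nabla f_h(x) = (d/h)\E_{v \sim \sigma_N}[f(x+hv)\,v]$ which was already derived via Stokes' theorem in the proof of Lemma~\ref{lem:smt_lemma}. For the convex case: convexity of $f_h$ is immediate by integrating the convexity inequality applied pointwise to the shifts $x+hu$ and $y+hu$; and $f(x) \leq f_h(x)$ is Jensen's inequality together with $\E_{u \sim \U(\BX^d)}[u] = 0$, since the centre of mass of the unit ball is the origin.

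In the $L_0$-Lipschitz case, the Lipschitz continuity of $f_h$ is immediate from $|f(x+hu) - f(y+hu)| \leq L_0\|x-y\|$, and the bound $f_h(x) \leq f(x) + L_0h$ from $|f(x+hu)-f(x)| \leq L_0 h\|u\| \leq L_0 h$. Differentiability of $f_h$ is a consequence of the spherical formula (the integrand is continuous in $x$ uniformly in $v$). The delicate point is the gradient-Lipschitz estimate with factor $\sqrt{d}$ rather than the naive $d$: using $\E_v[v] = 0$, one writes
\begin{equation*}
\nabla f_h(x) - \nabla f_h(y) = \frac{d}{h}\,\E_v\bigl[(\phi(v) - \E_v[\phi(v)])\,v\bigr], \qquad \phi(v) := f(x+hv) - f(y+hv),
\end{equation*}
and for any unit $w \in \R^d$ applies Cauchy--Schwarz together with Lemma~\ref{lem:sph_exp} (which, after normalization, gives $\E_v[vv^\intercal] = I/d$): $w^\intercal \E_v[(\phi - \E[\phi])v] \leq \sqrt{\mathrm{Var}(\phi)}\cdot\sqrt{w^\intercal \E_v[vv^\intercal] w} = \sqrt{\mathrm{Var}(\phi)/d}$. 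Since $\mathrm{Var}(\phi) \leq \E[\phi^2] \leq L_0^2\|x-y\|^2$, taking the supremum over $w$ yields the claimed $L_0\sqrt{d}/h$ bound.

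For the $L_1$-smooth case, differentiation under the integral sign in the ball representation gives $\nabla f_h(x) = \E_{u \sim \U(\BX^d)}[\nabla f(x+hu)]$, and integrating the smoothness inequality yields $L_1$-smoothness of $f_h$. To bound $\|\nabla f_h(x) - \nabla f(x)\|$, I again use the spherical formula together with $\E_v[v] = 0$ and $\E_v[vv^\intercal] = I/d$ to rewrite
\begin{equation*}
\nabla f_h(x) - \nabla f(x) = \frac{d}{h}\,\E_v\bigl[\bigl(f(x+hv) - f(x) - h\scal{\nabla f(x)}{v}\bigr)v\bigr],
\end{equation*}
and apply the Taylor bound $|f(x+hv) - f(x) - h\scal{\nabla f(x)}{v}| \leq L_1 h^2/2$ together with $\|v\|=1$. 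The functional bound $f_h(x) \leq f(x) + L_1 h^2/2$ follows from integrating the same Taylor estimate over $u \in \BX^d$ and using $\E[u] = 0$.

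The main obstacle is obtaining the $\sqrt{d}$ factor in the gradient-Lipschitz bound: a direct application of the triangle inequality to the spherical formula yields only the weaker $dL_0/h$ constant, and the sharper bound requires exploiting the mean-zero property of the uniform measure on $\SX^{d-1}$ together with Lemma~\ref{lem:sph_exp}. All remaining steps are routine integration or differentiation under the integral sign.
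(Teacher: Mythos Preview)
Your proof is correct; in fact, the paper itself does not prove this proposition but merely cites the references \cite{duchi_rand_smoothing,Gao2018,NEURIPS2022_a78f142a,yousefian2012stochastic} for these standard facts, so there is no in-paper argument to compare against. Your treatment is more self-contained than the paper's, and the argument you give for the gradient-Lipschitz constant $L_0\sqrt{d}/h$ --- centering $\phi$ by its mean (which is free since $\E_v[v]=0$), then Cauchy--Schwarz combined with Lemma~\ref{lem:sph_exp} to extract the $1/\sqrt{d}$ from the directional second moment --- is exactly the mechanism used in the cited works (e.g.\ \cite[Lemma~8]{duchi_rand_smoothing}) to beat the naive $dL_0/h$ bound.
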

\begin{proof}
These are standard results proposed and proved in different works - see for example \cite[Lemma 8]{duchi_rand_smoothing},\cite[Proposition 7.5]{Gao2018},\cite[Proposition 2.2]{NEURIPS2022_a78f142a},\cite{yousefian2012stochastic}.    
\end{proof}

\begin{lemma}[Approximation Error]\label{lem:approx_error}
    Let $g(\cdot)$ be the surrogate defined in eq. \eqref{eqn:surrogate_det} for arbitrary $h >0$ and $G \in O(d)$. Then the following hold: 
    \begin{itemize}
        \item[(i)] If $f$ is $L_0$-Lipschitz (see  Assumption \ref{asm:l_cont}), then, for every $x \in \mathbb{R}^d$,
         \begin{equation*}
        \E_G[\| g(x) \|^2] \leq 2c \frac{d L_0^2}{\ell},
    \end{equation*}
        where $c$ is a numerical constant.
        \item[(ii)] If $f$ is $L_1$-smooth (see  Assumption \ref{asm:l_smooth}), then, for every $x \in \R^d$,
    \begin{equation*}                    E_G[\|g(x) \|^2] \leq \frac{2 d}{\ell}\| \nabla f(x) \|^2 + \frac{L_1^2d^2}{2\ell}h^2.
    \end{equation*}
    \end{itemize}
\end{lemma}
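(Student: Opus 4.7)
The plan is to exploit the orthogonality of the directions $v_i := G e_i$ together with the fact that each $v_i$ is marginally uniform on the sphere $\SX^{d-1}$ (a consequence of the Haar invariance of $G$). I would first set $\psi(v) := (f(x+hv) - f(x-hv))/(2h)$ and expand
$$\|g(x)\|^2 = \left\|\frac{d}{\ell}\sum_{i=1}^\ell \psi(v_i)\, v_i\right\|^2 = \frac{d^2}{\ell^2}\sum_{i=1}^\ell \psi(v_i)^2,$$
using the orthonormality $v_i^\top v_j = \delta_{ij}$. Taking expectation and using that each $v_i$ has the same marginal law reduces both parts to bounding a single scalar second moment: $\E_G[\|g(x)\|^2] = (d^2/\ell)\,\E[\psi(v_1)^2]$.

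For part (i), the crucial observation will be that $\E[\psi(v_1)] = 0$: since $v$ and $-v$ are equal in distribution for $v$ uniform on $\SX^{d-1}$, the symmetric difference in the numerator of $\psi$ averages to zero. Hence $\E[\psi(v_1)^2]$ is exactly the variance of $\psi(v_1)$. Because $f$ is $L_0$-Lipschitz, the triangle inequality shows that $v \mapsto \psi(v)$ is $L_0$-Lipschitz on the sphere, so Lemma \ref{lem:conc_sphere} yields $\E[\psi(v_1)^2] \leq c L_0^2/d$. Substituting into the reduction gives $\E_G[\|g(x)\|^2] \leq c\, d L_0^2/\ell$, matching the claim up to the absorption of constants.

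For part (ii), I would Taylor-expand using $L_1$-smoothness: for every $v \in \SX^{d-1}$,
$$f(x \pm hv) = f(x) \pm h\langle \nabla f(x), v\rangle + R_\pm(v), \qquad |R_\pm(v)| \leq \tfrac{L_1 h^2}{2},$$
so that $\psi(v) = \langle \nabla f(x), v\rangle + (R_+(v) - R_-(v))/(2h)$ with the residual bounded by $L_1 h/2$ in absolute value. Applying $(a+b)^2 \leq 2a^2 + 2b^2$ and taking expectation,
$$\E[\psi(v_1)^2] \leq 2\,\E[\langle \nabla f(x), v_1\rangle^2] + \tfrac{L_1^2 h^2}{2}.$$
Lemma \ref{lem:sph_exp}, after normalizing the spherical measure, yields $\E[v_1 v_1^\top] = I/d$, hence $\E[\langle \nabla f(x), v_1\rangle^2] = \|\nabla f(x)\|^2/d$. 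Combining with the reduction $\E_G[\|g(x)\|^2] = (d^2/\ell)\,\E[\psi(v_1)^2]$ produces exactly the bound $(2d/\ell)\|\nabla f(x)\|^2 + (L_1^2 d^2/(2\ell)) h^2$.

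The main subtlety lies in part (i): the symmetrization that makes $\psi$ mean-zero is what allows the sharp $1/d$ variance factor from the spherical concentration to cancel one power of $d$ in the prefactor $d^2/\ell$. Without this observation, one could only use the pointwise bound $|\psi(v)| \leq L_0$ and obtain a trivial $O(d^2 L_0^2/\ell)$, missing the claimed $O(d L_0^2/\ell)$ scaling by a full factor of $d$; this factor is precisely what drives the improved dimension dependence in all the convergence rates of the paper.
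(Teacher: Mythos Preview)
Your proposal is correct and follows essentially the same route as the paper: reduce $\|g(x)\|^2$ to a sum of squared scalar terms via the orthonormality of the $Ge_i$, use that each $Ge_i$ is marginally uniform on $\SX^{d-1}$, and then invoke Lemma~\ref{lem:conc_sphere} for (i) and the Descent Lemma together with Lemma~\ref{lem:sph_exp} for (ii). The only cosmetic difference is in part (i): the paper centers the one-sided evaluation $f(x+hv)$ by $\gamma=\E[f(x+hv)]$, splits the central difference into two centered forward terms, and applies the concentration bound to each; you instead observe directly that $\psi(v)=(f(x+hv)-f(x-hv))/(2h)$ is odd under $v\mapsto -v$ and hence already mean-zero, so a single application of Lemma~\ref{lem:conc_sphere} to the $L_0$-Lipschitz function $\psi$ suffices --- a slightly cleaner packaging of the same idea that even shaves the constant from $2c$ to $c$.
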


\begin{proof}
    Note that, since directions are orthogonal, we have
    \begin{equation*}
        \E_G[\| g(x) \|^2] = \frac{d^2}{4 \ell^2 h^2} \sum\limits_{i = 1}^\ell \E_G[( f(x + h  G e_i) - f(x - h Ge_i))^2 \| G e_i\|^2].
    \end{equation*}
    By \cite[Theorem 3.7]{mattila_1995},
    \begin{equation}\label{eqn:exp_g_norm}
        \E_G[\| g(x) \|^2] = \frac{d^2}{4 \ell^2 h^2} \sum\limits_{i = 1}^\ell \E_{v_i}[( f(x + h  v^{(i)}) - f(x - h  v^{(i)}))^2 \| v^{(i)} \|^2],
    \end{equation}
    where each $v^{(i)}$ is uniformly distributed on $\SX^{d -1}$. %
    \\
    $(i)$: Set $\gamma = \E_{v^{(i)}}[f(x + h v^{(i)})]$ for every $i$ (this expectation does not depend on $i$). Then
    \begin{equation*}
        \begin{aligned}
        \E_G[\| g(x) \|^2] &= \frac{d^2}{4\ell^2 h^2} \sum\limits_{i = 1}^\ell \E_{v^{(i)}}[(f(x + h  v^{(i)}) - f(x - h  v^{(i)}) + \gamma - \gamma)^2\| v^{(i)} \|^2]\\
        &= \frac{d^2}{4\ell^2 h^2} \sum\limits_{i = 1}^\ell \E_{v^{(i)}}[( (f(x + h  v^{(i)}) - \gamma) - (f(x - h  v^{(i)}) - \gamma))^2 \| v^{(i)} \|^2]\\
        &\leq \frac{d^2}{2 \ell^2 h^2} \sum\limits_{i = 1}^\ell \E_{v^{(i)}}[( (f(x + h  v^{(i)}) - \gamma)^2 + (f(x- h  v^{(i)}) - \gamma)^2 )\| v^{(i)} \|^2 ]\\
        &= \frac{ d^2}{2 \ell^2 h^2} \sum\limits_{i = 1}^\ell \Big[ \E_{v^{(i)}}[ (f(x + h  v^{(i)}) - \gamma)^2 \| v^{(i)} \|^2  ] + \E_{v^{(i)}}[(f(x - h  v^{(i)}) - \gamma)^2 \| v^{(i)} \|^2 ] \Big].
        \end{aligned}
    \end{equation*}
    Since $v^{(i)}$ is uniformly distributed on $\SX^{d-1}$, it satisfies $\|v^{(i)}\|^2=1$ and by symmetry we have
    \begin{equation*}
        \E_G[\| g(x) \|^2] \leq \frac{ d^2}{\ell^2 h^2} \sum\limits_{i = 1}^\ell \E_{v^{(i)}}[ (f(x + h  v^{(i)}) - \gamma)^2 ].
    \end{equation*}
    The definition of  $\gamma$ yields
    \begin{equation*}
        \begin{aligned}
         \E_G[\| g(x) \|^2] &\leq  \frac{ d^2}{\ell^2 h^2} \sum\limits_{i = 1}^\ell \E_{v^{(i)}}[( (f(x + h  v^{(i)}) - \gamma)^2 ]\\
         &=\frac{ d^2}{\ell^2 h^2} \sum\limits_{i = 1}^\ell \E_{v^{(i)}}[( f(x + h  v^{(i)}) - \E_{v^{(i)}}[f(x + h  v^{(i)})])^2 ].
        \end{aligned}
    \end{equation*}
     The claim follows by Lemma \ref{lem:conc_sphere} and the fact that $f(x + h v^{(i)})$ is $hL_0$-Lipschitz continuous w.r.t to $v^{(i)}$.
     \\
    $(ii)$:  Equation  \eqref{eqn:exp_g_norm} yields
    \begin{equation*}
        \begin{aligned}
         \E_G[\| g(x) \|^2] &=  \frac{d^2}{4 \ell^2 h^2} \sum\limits_{i = 1}^\ell \E_{v^{(i)}}[( f(x + h  v^{(i)}) - f(x - h  v^{(i)}) - f(x) + f(x))^2 \|v^{(i)}\|^2]\\
         &\leq \frac{d^2}{2 \ell^2 h^2} \sum\limits_{i = 1}^\ell \Big[ \E_{v^{(i)}}[( f(x + h  v^{(i)}) - f(x))^2 \|v^{(i)}\|^2]\\
         &+ \E_{v^{(i)}}[(f(x - h  v^{(i)}) - f(x))^2 \|v^{(i)}\|^2 ] \Big]\\
         &=\frac{d^2}{\ell^2 h^2} \sum\limits_{i = 1}^\ell \E_{v^{(i)}}[( f(x + h  v^{(i)}) - f(x))^2],
        \end{aligned}
    \end{equation*}
    where the last equation follows by symmetry.
    Adding and subtracting $\scalT{\nabla f(x)}{h v^{(i)}}$ we derive
    \begin{equation*}
        \begin{aligned}
         \E_G[\| g(x) \|^2] &\leq \frac{d^2}{\ell^2 h^2} \sum\limits_{i = 1}^\ell \E_{v^{(i)}} \Bigg[ \Big( f(x + h  v^{(i)}) - f(x) - \scalT{\nabla f(x)}{h v^{(i)}} + \scalT{\nabla f(x)}{h v^{(i)}} \Big)^2 \Bigg]\\
         &\leq \frac{2d^2}{\ell^2 h^2} \sum\limits_{i = 1}^\ell \Bigg( \E_{v^{(i)}} \Bigg[ \Big( f(x + h  v^{(i)}) - f(x) - \scalT{\nabla f(x)}{h v^{(i)}} \Big)^2 \Bigg]\\
         &+ \E_{v^{(i)}}\Bigg[ \Big(\scalT{\nabla f(x)}{h v^{(i)}} \Big)^2 \Bigg] \Bigg).
        \end{aligned}
    \end{equation*}
    Denote by $\beta(\SX^{d - 1})$ the surface area of  $\SX^{d - 1}$. The Descent Lemma \cite{polyak1987introduction} implies
    \begin{equation*}
        \begin{aligned}
         \E_G[\| g(x) \|^2] &\leq \frac{2d^2}{\ell^2 h^2} \sum\limits_{i = 1}^\ell \Bigg[ \Big( \frac{L_1^2}{4}h^4 \Big) + \E\Bigg[ \Big( \scalT{\nabla f(x)}{h v^{(i)}} \Big)^2 \Bigg] \Bigg]\\
         &=\frac{L_1^2d^2}{2\ell}h^2  + \frac{2d^2}{\ell^2 h^2} \sum\limits_{i = 1}^\ell \E\Bigg[ \Big(\scalT{\nabla f(x)}{h v^{(i)}} \Big)^2 \Bigg]\\
         &= \frac{L_1^2d^2}{2\ell}h^2  + \frac{2d^2}{\ell^2 \beta(\SX^{d -1})} \sum\limits_{i = 1}^\ell \int_{\SX^{d-1}}  \nabla f(x)^\intercal v^{(i)} v^{(i)\intercal} \nabla f(x) \, d\sigma(v).
        \end{aligned}
    \end{equation*}
    By Lemma \ref{lem:sph_exp}, we get the claim. Indeed,
    \begin{equation*}
        \begin{aligned}
         \E_G[\| g(x) \|^2] &\leq  \frac{L_1^2d^2}{2\ell}h^2  + \frac{2d^2}{\ell^2 \beta(\SX^{d -1})} \sum\limits_{i = 1}^\ell \Big( \frac{\beta(\SX^{d - 1})}{d} \| \nabla f(x) \|^2 \Big)\\
         &= \frac{2 d}{\ell}\| \nabla f(x) \|^2 + \frac{L_1^2d^2}{2\ell}h^2.
        \end{aligned}
    \end{equation*}
    
\end{proof}

\subsection{Auxiliary results and proofs for the nonsmooth setting, convex, and nonconvex.}
In this subsection, for every $k$, we will denote by $\mathcal{F}_k$ the $\sigma$-algebra $\sigma(G_0,\ldots,G_{k-1})$.

\begin{lemma}\label{lem:opial_like_lemma}
Let $f : \R^{d} \to \R$ be a lower semi-continuous function and denote with $S = \argmin f$ and $f^* = \min f$. Then,
 \begin{equation*}
     \left\{ \begin{array}{c}
          \text{(A)} \quad \forall x^* \in S, \, \exists \lim\limits_{k} \| x_k - x^* \| \\
          \text{(B)} \quad \liminf\limits_{k} f(x_k) = f^* \qquad \quad \\
     \end{array}
     \right.\implies \exists x_{\infty} \in S \quad \text{s.t.} \quad x_k \to x_{\infty}.
 \end{equation*}
\end{lemma}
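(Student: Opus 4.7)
The plan is to follow the classical Opial-lemma argument, in three steps: obtain boundedness from (A), produce a candidate limit point in $S$ using (B) together with lower semi-continuity, and then promote that cluster point to a full limit by invoking (A) once more.

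First, I would fix an arbitrary $\tilde{x}\in S$ (the statement implicitly assumes $S\neq\emptyset$ since $f^{*}=\min f$ appears in (B)). By (A), the sequence $(\|x_{k}-\tilde{x}\|)_{k\in\mathbb{N}}$ converges, and in particular is bounded. Hence $(x_{k})_{k\in\mathbb{N}}$ is bounded in $\mathbb{R}^{d}$, so by Bolzano--Weierstrass it admits at least one cluster point.

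Next, I would use (B) to locate a cluster point that actually lies in $S$. Extract a subsequence $(x_{k_{n}})_{n\in\mathbb{N}}$ with $\lim_{n}f(x_{k_{n}})=f^{*}$. By boundedness, pass to a further subsequence (not relabeled) converging to some $x_{\infty}\in\mathbb{R}^{d}$. Lower semi-continuity of $f$ then gives
\begin{equation*}
f(x_{\infty})\ \leq\ \liminf_{n\to\infty}f(x_{k_{n}})\ =\ f^{*},
\end{equation*}
so $f(x_{\infty})=f^{*}$ and $x_{\infty}\in S$.

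Finally, I would apply (A) with $x^{*}=x_{\infty}$: the limit $\lim_{k}\|x_{k}-x_{\infty}\|$ exists. Along the subsequence $(x_{k_{n}})$ this limit equals $0$, so the whole sequence satisfies $\|x_{k}-x_{\infty}\|\to 0$, i.e.\ $x_{k}\to x_{\infty}$. There is no real obstacle here; the only care needed is the standard "subsequence principle" that an existing limit must coincide with any subsequential limit, which is what makes (A) do all the work once a good cluster point has been identified via (B) and lower semi-continuity.
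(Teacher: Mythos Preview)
Your proof is correct and follows essentially the same approach as the paper's own proof: use (A) for boundedness, extract via (B) a subsequence with $f(x_{k_n})\to f^*$, pass to a convergent sub-subsequence, apply lower semi-continuity to place the limit in $S$, and then invoke (A) again at that limit point to upgrade subsequential convergence to full convergence. The only cosmetic difference is the order in which (A) and (B) are first invoked.
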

\begin{proof}
    Since (B) holds, we have that exists $(x_{k_j})_{j\in\mathbb{N}}$ subsequence of $(x_k)_{k\in\mathbb{N}}$
    such that $f(x_{k_j}) \to f^*$. Since $S \neq \emptyset$ and (A) we have that 
    \begin{equation*}
        \exists x^* \in S \quad \text{and} \quad \exists \lim\limits_{k} \|x_k - x^*\|.
    \end{equation*}
    Thus, the sequence $(x_k)_{k\in\mathbb{N}}$ is bounded and, therefore, also $(x_{k_j})_{j\in\mathbb{N}}$ is bounded. Taking a convergent subsequence $(x_{k_{j_n}})_{n\in\mathbb{N}}$ of $(x_{k_j})_{j\in\mathbb{N}}$, we have that exists $x_{\infty}$ s.t.
    \begin{equation*}
        x_{k_{j_n}} \to x_{\infty}.
    \end{equation*}
    Since $f$ is assumed to be lower semi-continuous, we have that 
    \begin{equation*}
        f(x_\infty) \leq \liminf\limits_{n} f(x_{k_{j_n}}) = f^{*} = \lim\limits_{j} f(x_{k_j}).
    \end{equation*}
    Thus, we have that $x_{\infty} \in S$ which implies, by (A), that 
    \begin{equation*}
        \exists \lim\limits_k \|x_k - x_{\infty} \| \quad \text{and} \quad \lim\limits_{n} \|x_{k_{j_n}} - x_{\infty} \| = 0.
    \end{equation*}
    Hence, since $x_{k_{j_n}}$ is a subsequence of $x_k$, 
    \begin{equation*}
        \lim\limits_{k} \| x_k - x_{\infty} \| = 0,
    \end{equation*}
    and, therefore, $x_k \to x_{\infty} \in S$.
\end{proof}

\begin{lemma}[Convergence: convex non-smooth]\label{lem:nonsmt_convergence}
Assume that $f$ is convex and $L_0$ Lipschitz continuous.  Let $(x_k)_{k\in\mathbb{N}}$ be the sequence generated by Algorithm \ref{alg:ozd} and let $x^*\in\argmin f$. Then, for every $k\in\mathbb{N}$, the following inequality holds:
\begin{equation*}
    \begin{aligned}
        \E[\|x_{k + 1} -x^* \|^2 | \mathcal{F}_k] - \|x_k - x^*\|^2+2 \alpha_k (f(x_k) - f(x^*))&\leq 2c\frac{L_0^2 d}{\ell} \alpha_k^2  + 2 L_0 \alpha_k h_k,
    \end{aligned}
\end{equation*}
where $c$ is some non-negative constant independent from the dimension. Moreover, if the stepsizes satisfy Assumption \ref{asm:nonsmt_param_conv}, we have
\begin{equation*}
    \lim\limits_{k \to + \infty} f(x_k) = f(x^*) \quad \text{a.s,}
\end{equation*}
and there exists a random variable $\hat{x}$ taking values in in $\argmin f$ such that $x_k \to \hat{x}$ a.s. %

\end{lemma}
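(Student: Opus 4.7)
\medskip

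\noindent\textbf{Proof proposal.} The plan is to first establish the one-step descent inequality via a standard SGD-style expansion, and then apply a Robbins--Siegmund type supermartingale argument combined with Lemma~\ref{lem:opial_like_lemma} for almost sure convergence. Expand
\begin{equation*}
\|x_{k+1}-x^*\|^2 = \|x_k-x^*\|^2 - 2\alpha_k\scalT{g_k}{x_k - x^*} + \alpha_k^2\|g_k\|^2
\end{equation*}
and take conditional expectation with respect to $\mathcal{F}_k$. By the Smoothing Lemma (Lemma~\ref{lem:smt_lemma}), $\E[g_k\mid\mathcal{F}_k]=\nabla f_{h_k}(x_k)$, and by Lemma~\ref{lem:approx_error}\,(i), $\E[\|g_k\|^2\mid\mathcal{F}_k]\leq 2cL_0^2 d/\ell$. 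This gives
\begin{equation*}
\E[\|x_{k+1}-x^*\|^2\mid\mathcal{F}_k] \leq \|x_k-x^*\|^2 - 2\alpha_k\scalT{\nabla f_{h_k}(x_k)}{x_k - x^*} + 2c\frac{L_0^2 d}{\ell}\alpha_k^2.
\end{equation*}

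The key step for the descent is to replace the inner product by a functional gap in $f$ itself. Since $f$ is convex, so is $f_{h_k}$ (Proposition~\ref{prop:smt_props}), hence $\scalT{\nabla f_{h_k}(x_k)}{x_k-x^*}\geq f_{h_k}(x_k) - f_{h_k}(x^*)$. Using the two bounds from Proposition~\ref{prop:smt_props}, namely $f_{h_k}(x_k)\geq f(x_k)$ and $f_{h_k}(x^*)\leq f(x^*)+L_0 h_k$, we get
\begin{equation*}
\scalT{\nabla f_{h_k}(x_k)}{x_k - x^*} \geq f(x_k) - f(x^*) - L_0 h_k,
\end{equation*}
and substituting into the preceding inequality and rearranging yields exactly the stated estimate.

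For the almost sure convergence under Assumption~\ref{asm:nonsmt_param_conv}, set $\varepsilon_k := 2c L_0^2 d\alpha_k^2/\ell + 2L_0\alpha_k h_k$, which is summable because $\alpha_k^2\in\ell^1$ and $\alpha_k h_k\in\ell^1$. Since $f(x_k) - f(x^*)\geq 0$, the descent inequality fits the Robbins--Siegmund framework: we conclude that for each fixed $x^*\in\argmin f$ there is an event of full probability on which $\|x_k-x^*\|^2$ converges and $\sum_k \alpha_k(f(x_k)-f(x^*))<+\infty$. Combined with $\alpha_k\notin\ell^1$, this forces $\liminf_k f(x_k) = f(x^*)$ almost surely.

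The delicate point is condition (A) of Lemma~\ref{lem:opial_like_lemma}, which asks for simultaneous convergence of $\|x_k-x^*\|$ across \emph{all} $x^*\in \argmin f$. The standard workaround is to pick a countable dense subset $D\subset\argmin f$ (possible because $\argmin f$ is closed and convex, hence separable), obtain a single full-measure event on which $\|x_k-x^*\|$ converges for every $x^*\in D$, and then use a simple $3$-$\varepsilon$ argument based on $|\,\|x_k-y\|-\|x_k-x^*\|\,|\leq\|y-x^*\|$ to extend convergence to every $x^*\in\argmin f$. On the same event, conditions (A) and (B) of Lemma~\ref{lem:opial_like_lemma} both hold, so $x_k\to\hat{x}$ for some $\hat{x}\in\argmin f$; continuity of $f$ then upgrades $\liminf f(x_k) = f(x^*)$ to $\lim f(x_k) = f(x^*)$ almost surely. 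The main obstacle is this separability step, which requires a bit of care but is routine.
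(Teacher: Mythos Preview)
Your proof is correct and follows essentially the same route as the paper: expand $\|x_{k+1}-x^*\|^2$, invoke Lemma~\ref{lem:smt_lemma} and Lemma~\ref{lem:approx_error}(i) after conditioning on $\mathcal{F}_k$, use convexity of $f_{h_k}$ together with the two inequalities $f_{h_k}(x_k)\geq f(x_k)$ and $f_{h_k}(x^*)\leq f(x^*)+L_0h_k$ from Proposition~\ref{prop:smt_props} to get the descent inequality, and then apply Robbins--Siegmund plus Lemma~\ref{lem:opial_like_lemma}. The only cosmetic difference is that the paper outsources your explicit countable-dense-subset argument for condition~(A) to \cite[Lemma~9.9]{kozak2021zeroth}, which encapsulates exactly that separability trick.
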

\begin{proof}
    Let $k\in\mathbb{N}$. By  Algorithm \ref{alg:ozd},
    \begin{equation}\label{eqn:nn_smt_eqn}
        \| x_{k + 1} - x^* \|^2 - \| x_k  - x^* \|^2 = \alpha_k^2 \| g_k\|^2 - 2 \alpha_k \scalT{g_k}{x_k - x^*}.
    \end{equation}
    Since $f_{h_k}$ is convex by Proposition~\ref{prop:smt_props} and $ \E[g_k|\mathcal{F}_k]=\nabla f_{h_k}(x_k)$ (see Lemma \ref{lem:smt_lemma}), %
    we have
    \begin{equation*}
    -\scalT{\nabla f_{h_k}(x_k)}{x_k - x^*} \leq f_{h_k}(x^*)-f_{h_k}(x_k).        
    \end{equation*}
    Thus, taking the conditional expectation with respect to $\mathcal{F}_k$, by Lemma \ref{lem:approx_error}, we get,
    \begin{equation*}
            \E[\|x_{k + 1} -x^* \|^2 | \mathcal{F}_k] - \|x_k - x^*\|^2 \leq \underbrace{2c\frac{L_0^2 d}{\ell} \alpha_k^2}_{=: C_k} - 2 \alpha_k (f_{h_k}(x_k) - f_{h_k}(x^*)).
    \end{equation*}
    Then, by Proposition \ref{prop:smt_props},
   \begin{equation*}
            \E[\|x_{k + 1} -x^* \|^2 | \mathcal{F}_k] - \|x_k - x^*\|^2 \leq  C_k - 2 \alpha_k (f(x_k) - f(x^*)) + 2 L_0 \alpha_k h_k.
    \end{equation*}
    Next suppose that Assumption \ref{asm:nonsmt_param_conv} holds.  Rearranging the terms,
       \begin{equation*}
            \E[\|x_{k + 1} -x^* \|^2 | \mathcal{F}_k] - \|x_k - x^*\|^2 + 2 \alpha_k (f(x_k) - f(x^*)) \leq  C_k  + 2 L_0 \alpha_k h_k,
    \end{equation*}

     with $C_k \in \ell^{1}$ and $\alpha_k h_k \in \ell^{1}$. Therefore, Robbins-Siegmund Theorem \cite{ROBBINS1971233} implies that $(\|x_k - x^*\|)_{k\in\mathbb{N}}$ is a.s. convergent, 
     $\alpha_k (f(x_k) - f(x^*))\in\ell^1$ a.s. and thus, since $\alpha_k \not\in \ell^{1}$,
    \begin{equation}\label{eqn:nonsmooth_liminf}
        \liminf\limits_{k \to \infty} f(x_k) = f(x^*) \quad \text{a.s.}
    \end{equation}
    We derive from \cite[Lemma 9.9]{kozak2021zeroth} and Lemma \ref{lem:opial_like_lemma} that 
    there exists a random variable $\hat{x}$ taking values in $\argmin f$ such that $x_k \to \hat{x}$ a.s. Finally, continuity of $f$ yields that $\lim\limits_{k} f(x_k)= f(x_*)$ a.s.
\end{proof}
In the next Lemma, to derive bounds on function values, we study the sequence $(f_{h_k}(x_{k + 1}) - f_{h_k}(x_k))_{k\in\mathbb{N}}$. It is the difference between the smoothed function at iteration $k$ evaluated at $x_k$ and at $x_{k + 1}$. It corresponds to the function value decrease between the iterations $k + 1$ and $k$ if $h_k$ is constant. 
 
\begin{lemma}[Function Value decrease: nonconvex non-smooth setting]\label{lem:fun_dec_nonsmooth}
    Under Assumption \ref{asm:l_cont}, let $(x_k)_{k
    \in\mathbb{N}}$ be the sequence generated by Algorithm \ref{alg:ozd}. Then,
    \begin{equation*}
        \E[f_{h_k}(x_{k + 1}) | \mathcal{F}_k] - f_{h_k}(x_k) \leq - \alpha_k \| \nabla f_{h_k}(x_k) \|^2 + c \frac{L_0^3 d \sqrt{d}}{\ell} \frac{\alpha_k^2}{h_k},
    \end{equation*}
    where $c$ is a numerical constant. 
\end{lemma}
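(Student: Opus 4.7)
The plan is straightforward: apply a descent-lemma argument to the smoothed objective $f_{h_k}$, then plug in the two properties of the estimator $g_k$ that the paper has already established (unbiasedness from Lemma \ref{lem:smt_lemma} and the variance bound from Lemma \ref{lem:approx_error}).

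First, I would invoke Proposition \ref{prop:smt_props}: since $f$ is $L_0$-Lipschitz, $f_{h_k}$ is differentiable with $\nabla f_{h_k}$ Lipschitz of constant $L_0\sqrt{d}/h_k$. Applying the standard descent lemma to $f_{h_k}$ at the pair $x_k$, $x_{k+1}=x_k-\alpha_k g_k$ yields
\begin{equation*}
f_{h_k}(x_{k+1}) \;\leq\; f_{h_k}(x_k) - \alpha_k \scalT{\nabla f_{h_k}(x_k)}{g_k} + \frac{L_0\sqrt{d}}{2h_k}\,\alpha_k^2\,\|g_k\|^2.
\end{equation*}

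Next I would take conditional expectation with respect to $\mathcal{F}_k$. The iterate $x_k$ and the smoothing parameter $h_k$ are $\mathcal{F}_k$-measurable, while $G_k$ (and hence $g_k$) is independent of $\mathcal{F}_k$. By the Smoothing Lemma (Lemma \ref{lem:smt_lemma}), $\E[g_k\mid \mathcal{F}_k]=\nabla f_{h_k}(x_k)$, so the linear term becomes $-\alpha_k\,\|\nabla f_{h_k}(x_k)\|^2$. For the quadratic term, I would use Lemma \ref{lem:approx_error}(i), which gives $\E[\|g_k\|^2\mid \mathcal{F}_k]\leq 2c\,L_0^2 d/\ell$. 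Combining these two facts produces
\begin{equation*}
\E[f_{h_k}(x_{k+1})\mid \mathcal{F}_k] - f_{h_k}(x_k) \;\leq\; -\alpha_k\,\|\nabla f_{h_k}(x_k)\|^2 + \frac{L_0\sqrt{d}}{2h_k}\,\alpha_k^2\cdot \frac{2c\,L_0^2 d}{\ell},
\end{equation*}
which rearranges to exactly the claimed bound (absorbing the factor $c$ into the numerical constant of the statement).

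There is essentially no obstacle: the two non-trivial ingredients (the smoothness constant of $f_{h_k}$ and the second-moment bound on $g_k$) are already supplied by the auxiliary results, and the rest is the usual SGD descent lemma. The only point to check carefully is the conditioning, namely that $h_k$ and $x_k$ can be treated as constants under $\E[\,\cdot\mid \mathcal{F}_k]$ while $G_k$ is integrated out; this justifies using Lemma \ref{lem:approx_error} pointwise in $x_k$ with parameter $h_k$.
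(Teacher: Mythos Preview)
Your proposal is correct and follows essentially the same approach as the paper: apply the descent lemma to $f_{h_k}$ using the smoothness constant $L_0\sqrt{d}/h_k$ from Proposition~\ref{prop:smt_props}, take conditional expectation, identify the linear term via Lemma~\ref{lem:smt_lemma}, and bound the quadratic term via Lemma~\ref{lem:approx_error}(i). The paper's proof is precisely this, with no additional ingredients.
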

\begin{proof}
    By Lemma \ref{prop:smt_props}, we have that $f_{h_k}$ is $L_0\sqrt{d}/h_k$-smooth. Thus, by the Descent Lemma \cite{polyak1987introduction},
    \begin{equation*}
        f_{h_k}(x_{k + 1}) - f_{h_k}(x_k) \leq - \alpha_k \scalT{\nabla f_{h_k}(x_k)}{g_k} + \frac{L_0\sqrt{d}}{2 h_k}  \alpha_k^2 \| g_k \|^2.
    \end{equation*}
    Taking the conditional expectation with respect to $\mathcal{F}_k$, 
    \begin{equation}\label{eqn:fun_vals_nonsmooth_1}
        \E[f_{h_k}(x_{k + 1})| \mathcal{F}_k] - f_{h_k}(x_k) \leq - \alpha_k \| \nabla f_{h_k}(x_k) \|^2 + \frac{L_0\sqrt{d}}{2 h_k}  \alpha_k^2 \E[\| g_k \|^2|\mathcal{F}_k].
    \end{equation}
    The claim follows from Lemma \ref{lem:approx_error}.
\end{proof}

\subsection{Auxiliary results for smooth setting.}

\begin{lemma}[Function value decrease: convex smooth setting]\label{lem:fval_dec_con_smooth}
Under Assumption \ref{asm:l_smooth} %
, let $(x_k)_{k\in\mathbb{N}}$ be the sequence generated by Algorithm \ref{alg:ozd}. Then the following holds:
    \begin{equation*}
 \begin{aligned}   
        \E[f(x_{k + 1}) | \mathcal{F}_k] - f(x_k) &\leq -\alpha_k \Big( \frac{1}{2} - \frac{L_1 d}{\ell}\alpha_k \Big) \| \nabla f(x_k) \|^2 + \frac{L_1^2d^2 \alpha_k h_k^2}{8} + \frac{L_1^3 d^2}{4\ell}\alpha_k^2 h_k^2.
         \end{aligned}
    \end{equation*}
\end{lemma}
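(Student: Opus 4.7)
The starting point is the Descent Lemma applied to $f$, which is licit because Assumption~\ref{asm:l_smooth} holds. Using $x_{k+1}-x_k=-\alpha_k g_k$, this gives
\begin{equation*}
f(x_{k+1})-f(x_k)\leq -\alpha_k\langle\nabla f(x_k),g_k\rangle+\frac{L_1\alpha_k^2}{2}\|g_k\|^2.
\end{equation*}
Taking conditional expectation with respect to $\mathcal{F}_k$ and invoking the Smoothing Lemma (Lemma~\ref{lem:smt_lemma}), which yields $\E[g_k\mid\mathcal{F}_k]=\nabla f_{h_k}(x_k)$, I obtain
\begin{equation*}
\E[f(x_{k+1})\mid\mathcal{F}_k]-f(x_k)\leq-\alpha_k\langle\nabla f(x_k),\nabla f_{h_k}(x_k)\rangle+\frac{L_1\alpha_k^2}{2}\E[\|g_k\|^2\mid\mathcal{F}_k].
\end{equation*}

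The next step handles the two terms on the right-hand side separately. For the inner product, I write $\nabla f_{h_k}(x_k)=\nabla f(x_k)+(\nabla f_{h_k}(x_k)-\nabla f(x_k))$ and apply Young's inequality with parameter chosen so that only half of $\|\nabla f(x_k)\|^2$ survives, obtaining
\begin{equation*}
-\langle\nabla f(x_k),\nabla f_{h_k}(x_k)\rangle\leq -\tfrac{1}{2}\|\nabla f(x_k)\|^2+\tfrac{1}{2}\|\nabla f_{h_k}(x_k)-\nabla f(x_k)\|^2.
\end{equation*}
The deviation term is then controlled by the smooth-case bound in Proposition~\ref{prop:smt_props}, namely $\|\nabla f_{h_k}(x_k)-\nabla f(x_k)\|\leq h_k d L_1/2$, which produces the contribution $L_1^2d^2h_k^2/8$ in front of $\alpha_k$. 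For the second term, I apply Lemma~\ref{lem:approx_error}(ii) under the smoothness assumption to get
\begin{equation*}
\E[\|g_k\|^2\mid\mathcal{F}_k]\leq\frac{2d}{\ell}\|\nabla f(x_k)\|^2+\frac{L_1^2d^2}{2\ell}h_k^2.
\end{equation*}

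Combining these two bounds and collecting the coefficients of $\|\nabla f(x_k)\|^2$, the $\alpha_k$ from the descent step contributes $-\alpha_k/2$ while the $\alpha_k^2$ from the quadratic term of the Descent Lemma, multiplied by $2d/\ell$, contributes $+L_1d\alpha_k^2/\ell$. Factoring $\alpha_k$ gives exactly the coefficient $-\alpha_k(1/2-L_1d\alpha_k/\ell)$ in front of $\|\nabla f(x_k)\|^2$, while the $h_k^2$ contributions produce $L_1^2d^2\alpha_k h_k^2/8$ and $L_1^3d^2\alpha_k^2h_k^2/(4\ell)$, yielding the claim.

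The plan involves no hard step: it is essentially a textbook smooth SGD-style descent argument adapted to the biased estimator $g_k$. The only subtlety is the choice of the Young's inequality parameter, which must be set so that the surviving negative coefficient matches the $1/2$ appearing in the statement; any other split would still give a valid inequality but not the stated one. The smoothness of $f_{h_k}$ is not used here, only that of $f$ and the explicit estimate on $\|\nabla f_{h_k}-\nabla f\|$ coming from Proposition~\ref{prop:smt_props}.
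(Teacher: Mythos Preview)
Your proposal is correct and follows essentially the same route as the paper. The only cosmetic difference is that the paper first applies Cauchy--Schwarz to $\langle\nabla f(x_k),\nabla f_{h_k}(x_k)-\nabla f(x_k)\rangle$ and then Young's inequality on the resulting scalar product, whereas you apply Young's inequality directly at the vector level; both lead to the identical bound $-\tfrac{1}{2}\|\nabla f(x_k)\|^2+\tfrac{L_1^2d^2h_k^2}{8}$ and hence to the stated result.
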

\begin{proof}
    By the Descent Lemma \cite{polyak1987introduction} and Algorithm \ref{alg:ozd},
    \begin{equation*}
        f(x_{k + 1}) - f(x_k) \leq -\alpha_k \scalT{\nabla f(x_k)}{g_{k}} + \frac{L_1}{2}\alpha_k^2 \| g_k \|^2.
    \end{equation*}
    Taking the conditional expectation and by Lemma \ref{lem:approx_error},
    \begin{equation*}
        \E[f(x_{k + 1}) | \mathcal{F}_k] - f(x_k) \leq -\alpha_k \scalT{\nabla  f(x_k)}{ \nabla f_{h_k}(x_k)} + \frac{L_1}{2}\alpha_k^2 \Big[ \frac{2d}{\ell}\|\nabla f(x_k) \|^2 + \frac{L_1^2 d^2}{2\ell}h_k^2 \Big].
    \end{equation*}
    Adding and subtracting $\nabla f(x_k)$,
        \begin{equation*}
 \begin{aligned}   
        \E[f(x_{k + 1}) | \mathcal{F}_k] - f(x_k) &\leq -\alpha_k \scalT{\nabla  f(x_k)}{ \nabla f_{h_k}(x_k) - \nabla f(x_k)} - \alpha_k \| \nabla f(x_k)\|^2\\ 
        &+ \frac{L_1}{2}\alpha_k^2 \Big[ \frac{2d}{\ell}\|\nabla f(x_k) \|^2 + \frac{L_1^2 d^2}{2\ell}h_k^2 \Big].
         \end{aligned}
    \end{equation*}
    By Cauchy-Schwarz inequality and Proposition \ref{prop:smt_props},
    \begin{equation*}
 \begin{aligned}   
        \E[f(x_{k + 1}) | \mathcal{F}_k] - f(x_k) &\leq \alpha_k \Big( \frac{L_1 d}{2}h_k \Big) \| \nabla f(x_k)\| - \alpha_k \| \nabla f(x_k)\|^2\\ 
        &+ \frac{L_1}{2}\alpha_k^2 \Big[ \frac{2d}{\ell}\|\nabla f(x_k) \|^2 + \frac{L_1^2 d^2}{2\ell}h_k^2 \Big].
         \end{aligned}
    \end{equation*}
    By Young's inequality,
        \begin{equation*}
 \begin{aligned}   
        \E[f(x_{k + 1}) | \mathcal{F}_k] - f(x_k) &\leq \frac{L_1^2d^2 \alpha_k h_k^2}{8} +  \frac{\alpha_k}{2} \| \nabla f(x_k)\|^2 - \alpha_k \| \nabla f(x_k)\|^2\\ 
        &+ \frac{L_1}{2}\alpha_k^2 \Big[ \frac{2d}{\ell}\|\nabla f(x_k) \|^2 + \frac{L_1^2 d^2}{2\ell}h_k^2 \Big].\\
        &= -\alpha_k \Big( \frac{1}{2} - \frac{L_1 d}{\ell}\alpha_k \Big) \| \nabla f(x_k) \|^2 + \frac{L_1^2d^2 \alpha_k h_k^2}{8} + \frac{L_1^3 d^2}{4\ell}\alpha_k^2 h_k^2.
         \end{aligned}
    \end{equation*}
    This concludes the proof.
\end{proof}

\begin{lemma}[Convergence in smooth setting]\label{lem:smt_convergence}
     Let $(x_k)_{k\in\mathbb{N}}$ be the sequence generated by Algorithm \ref{alg:ozd} and let $x^* \in \argmin\limits_{x \in \R^d} f(x)$. Then, under Assumption \ref{asm:l_smooth}, the following inequality holds
    \begin{equation*}
        \begin{aligned}
        \E[\|x_{k+1} - x^* \|^2 | \mathcal{F}_k] - \|x_k - x^*\|^2 &\leq \frac{2d}{\ell}\alpha_k^2\| \nabla f(x_k)\|^2 + \frac{L_1^2 d^2}{2 \ell}\alpha_k^2 h_k^2\\ 
        &\quad +L_1 d \alpha_k h_k \|x_k - x^*\| -  2\alpha_k \scalT{\nabla f(x_k) }{x_k - x^*}.
        \end{aligned}
    \end{equation*}
    Moreover, if $f$ is convex, Assumption \ref{asm:bounded_stepsize} holds and $\alpha_k \leq \bar{\alpha} < \ell/(2dL_1)$. %
    Then
    \begin{itemize}
        \item $(\alpha_k \| \nabla f(x_k) \|^2) _{k\in\mathbb{N}}\in \ell^{1}$ a.s.
        \item $(\| x_k - x^* \|)_{k\in\mathbb{N}}$ is a.s. convergent.
        \item $(\alpha_k (f(x_k) - f(x^*)))_{k\in\mathbb{N}}\in \ell^{1}$ a.s.
        \item there exists a random variable $\hat{x}$ taking values in $\argmin f$ such that $x_k \to \hat{x}$ a.s. %
        and $\lim\limits_{k \to \infty} f(x_k) = \min f$.
    \end{itemize}
\end{lemma}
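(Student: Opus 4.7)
The first inequality is obtained by expanding the update rule. Writing $x_{k+1}=x_k-\alpha_kg_k$ and squaring yields
\begin{equation*}
\|x_{k+1}-x^*\|^2=\|x_k-x^*\|^2-2\alpha_k\scalT{g_k}{x_k-x^*}+\alpha_k^2\|g_k\|^2.
\end{equation*}
Taking the conditional expectation with respect to $\mathcal{F}_k$, I use Lemma \ref{lem:smt_lemma} to replace $\E[g_k\mid\mathcal{F}_k]$ by $\nabla f_{h_k}(x_k)$, and Lemma \ref{lem:approx_error}(ii) to bound $\E[\|g_k\|^2\mid\mathcal{F}_k]$ by $\tfrac{2d}{\ell}\|\nabla f(x_k)\|^2+\tfrac{L_1^2d^2}{2\ell}h_k^2$. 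To recover $\nabla f(x_k)$ inside the inner product, I split $\nabla f_{h_k}(x_k)=\nabla f(x_k)+(\nabla f_{h_k}(x_k)-\nabla f(x_k))$; Cauchy--Schwarz combined with the bound $\|\nabla f_{h_k}(x_k)-\nabla f(x_k)\|\leq L_1dh_k/2$ from Proposition \ref{prop:smt_props} converts the residual into $L_1d\alpha_kh_k\|x_k-x^*\|$ and gives the stated inequality.

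For the convergence statement, I first invoke the convex $L_1$-smooth toolkit. Convexity gives $\scalT{\nabla f(x_k)}{x_k-x^*}\geq f(x_k)-f(x^*)\geq 0$, while co-coercivity applied at the minimizer (where $\nabla f(x^*)=0$) gives $\|\nabla f(x_k)\|^2\leq 2L_1(f(x_k)-f(x^*))$. Substituting both into the first inequality collapses the two $f(x_k)-f(x^*)$ contributions into $-2\alpha_k\bigl(1-\tfrac{2dL_1}{\ell}\alpha_k\bigr)(f(x_k)-f(x^*))$, and the stepsize bound $\alpha_k\leq\bar\alpha<\ell/(2dL_1)$ ensures this is at most $-2\delta\alpha_k(f(x_k)-f(x^*))$ with $\delta:=1-\tfrac{2dL_1\bar\alpha}{\ell}>0$.

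The main obstacle is the lingering first-order term $L_1d\alpha_kh_k\|x_k-x^*\|$, which on its own does not fit the Robbins--Siegmund template. I handle it through the elementary bound $\|x_k-x^*\|\leq 1+\|x_k-x^*\|^2$, which splits the offender into a multiplicative perturbation $L_1d\alpha_kh_k\|x_k-x^*\|^2$ and a summable constant piece $L_1d\alpha_kh_k$. Since $\alpha_kh_k\in\ell^1$ by Assumption \ref{asm:bounded_stepsize} (hence bounded, so $(\alpha_kh_k)^2\in\ell^1$ as well), I arrive at a Robbins--Siegmund-type inequality
\begin{equation*}
\E[V_{k+1}\mid\mathcal{F}_k]\leq(1+a_k)V_k-b_k+c_k,\qquad V_k:=\|x_k-x^*\|^2,
\end{equation*}
with $a_k=L_1d\alpha_kh_k\in\ell^1$, $b_k=2\delta\alpha_k(f(x_k)-f(x^*))\geq 0$, and $c_k=L_1d\alpha_kh_k+\tfrac{L_1^2d^2}{2\ell}(\alpha_kh_k)^2\in\ell^1$.

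Applying Robbins--Siegmund \cite{ROBBINS1971233} delivers the a.s.\ convergence of $V_k$, hence of $\|x_k-x^*\|$ (second bullet), together with $\sum_k\alpha_k(f(x_k)-f(x^*))<\infty$ a.s.\ (third bullet); co-coercivity then upgrades the latter to $\sum_k\alpha_k\|\nabla f(x_k)\|^2<\infty$ a.s.\ (first bullet). Because $\alpha_k\notin\ell^1$, the third bullet forces $\liminf_k f(x_k)=\min f$ a.s., and since the Robbins--Siegmund conclusion above holds for every fixed $x^*\in\argmin f$, hypothesis (A) of Lemma \ref{lem:opial_like_lemma} is satisfied (along the lines of \cite[Lemma 9.9]{kozak2021zeroth}); thus there exists a random variable $\hat x$ taking values in $\argmin f$ with $x_k\to\hat x$ a.s., and continuity of $f$ yields $f(x_k)\to\min f$ a.s., closing the fourth bullet.
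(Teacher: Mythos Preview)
Your proof of the first inequality is essentially identical to the paper's: expand the square, take conditional expectation, invoke Lemma~\ref{lem:smt_lemma} and Lemma~\ref{lem:approx_error}(ii), split $\nabla f_{h_k}(x_k)=\nabla f(x_k)+(\nabla f_{h_k}(x_k)-\nabla f(x_k))$, and use Cauchy--Schwarz with Proposition~\ref{prop:smt_props}.

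For the convex part your argument is correct but organized differently from the paper's. Two points of departure:
\begin{itemize}
\item \textbf{Absorbing the cross term.} You use the elementary inequality $\|x_k-x^*\|\leq 1+\|x_k-x^*\|^2$ to push $L_1d\alpha_kh_k\|x_k-x^*\|$ into the Robbins--Siegmund template. The paper instead applies Young's inequality with parameter $\tau_k=\alpha_kh_k$, obtaining $\tfrac{L_1d}{2}\alpha_kh_k+\tfrac{L_1d}{2}\alpha_kh_k\|x_k-x^*\|^2$. Both work; yours is more elementary, the paper's gives slightly tighter constants.
\item \textbf{Order of the bullets.} The paper first applies Baillon--Haddad (co-coercivity) to get $-2\alpha_k\langle\nabla f(x_k),x_k-x^*\rangle\leq-\tfrac{2\alpha_k}{L_1}\|\nabla f(x_k)\|^2$, runs Robbins--Siegmund once to obtain $\alpha_k\|\nabla f(x_k)\|^2\in\ell^1$, then returns to the inequality with plain convexity and runs Robbins--Siegmund a second time for $\alpha_k(f(x_k)-f(x^*))\in\ell^1$. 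Finally it invokes Lemma~\ref{lem:fval_dec_con_smooth} and a third Robbins--Siegmund to upgrade $\liminf$ to a full limit before calling Opial. You instead combine convexity with the PL-type bound $\|\nabla f(x_k)\|^2\leq 2L_1(f(x_k)-f(x^*))$ in a single pass, getting $\alpha_k(f(x_k)-f(x^*))\in\ell^1$ directly from one Robbins--Siegmund, and then deduce $\alpha_k\|\nabla f(x_k)\|^2\in\ell^1$ from it. For the last bullet you bypass Lemma~\ref{lem:fval_dec_con_smooth} entirely: Lemma~\ref{lem:opial_like_lemma} needs only the $\liminf$ statement, so you obtain $x_k\to\hat x$ first and recover $\lim f(x_k)=\min f$ from continuity.
\end{itemize}
Your route is more economical (one Robbins--Siegmund instead of three, and no appeal to Lemma~\ref{lem:fval_dec_con_smooth}); the paper's route has the mild advantage that its first Robbins--Siegmund step only needs $\bar\alpha<\ell/(dL_1)$, though the stronger bound $\bar\alpha<\ell/(2dL_1)$ is ultimately required anyway.
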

\begin{proof}
    We have
    \begin{equation*}    \begin{aligned}
         \| x_{k + 1} - x^*\|^2  - \| x_{k } - x^*\|^2 &= \alpha_k^2 \| g_k \|^2 
         - 2 \alpha_k \scalT{ g_k}{x_k - x^*}.
        \end{aligned}
    \end{equation*}
    Taking the conditional expectation,
    \begin{equation*}    \begin{aligned}
         \E[\| x_{k + 1} - x^*\|^2|\mathcal{F}_k]  - \| x_{k } - x^*\|^2 &= \alpha_k^2 \E[\| g_k \|^2 | \mathcal{F}_k] 
         - 2 \alpha_k \scalT{ \nabla f_{h_k}(x_k)}{x_k - x^*}.
        \end{aligned}
    \end{equation*}
    For every $k$, set $u_k = \| x_k - x^* \|$. By Lemma \ref{lem:approx_error},
    \begin{equation*}
        \E[u_{k+1}^2 | \mathcal{F}_k] - u_k^2 \leq \frac{2d}{\ell}\alpha_k^2\| \nabla f(x_k)\|^2 + \frac{L_1^2 d^2}{2 \ell}\alpha_k^2 h_k^2 - 2\alpha_k \scalT{\nabla f_{h_k}(x_k)}{x_k - x^*}.
    \end{equation*}
    Note that
    \begin{equation*}
        - 2\alpha_k \scalT{\nabla f_{h_k}(x_k)}{x_k - x^*} = 
         2\alpha_k \scalT{\nabla f_{h_k}(x_k) - \nabla f(x_k) }{x^* - x_k} -  2\alpha_k \scalT{\nabla f(x_k) }{x_k - x^*}.
    \end{equation*}
    Thus,
    \begin{equation*}
        \begin{aligned}
        \E[u_{k+1}^2 | \mathcal{F}_k] - u_k^2 &\leq \frac{2d}{\ell}\alpha_k^2\| \nabla f(x_k)\|^2 + \frac{L_1^2 d^2}{2 \ell}\alpha_k^2 h_k^2\\ 
        &+2\alpha_k \scalT{\nabla f_{h_k}(x_k) - \nabla f(x_k) }{x^* - x_k} -  2\alpha_k \scalT{\nabla f(x_k) }{x_k - x^*}.
        \end{aligned}
    \end{equation*}
    By the Cauchy-Schwarz inequality,
    \begin{equation*}
        \begin{aligned}
        \E[u_{k+1}^2 | \mathcal{F}_k] - u_k^2 &\leq \frac{2d}{\ell}\alpha_k^2\| \nabla f(x_k)\|^2 + \frac{L_1^2 d^2}{2 \ell}\alpha_k^2 h_k^2\\ 
        &+2\alpha_k \|\nabla f_{h_k}(x_k) - \nabla f(x_k)\| u_k -  2\alpha_k \scalT{\nabla f(x_k) }{x_k - x^*}.
        \end{aligned}
    \end{equation*}
    The first claim follows from Proposition~\ref{prop:smt_props}. %
    By Proposition \ref{prop:smt_props} and Young's inequality with parameter $\tau_k = \alpha_k h_k$, we get
    \begin{equation}\label{eqn:yng_ineq_cs}
        \begin{aligned}
        \E[u_{k+1}^2 | \mathcal{F}_k] - u_k^2 &\leq \frac{2d}{\ell}\alpha_k^2\| \nabla f(x_k)\|^2 + \frac{L_1^2 d^2}{2 \ell}\alpha_k^2 h_k^2\\ 
        &\quad+\frac{L_1 d}{2 \tau_k}\alpha_k^2 h_k^2 +  \frac{L_1 d \tau_k}{2}  u_k^2 -  2\alpha_k \scalT{\nabla f(x_k) }{x_k - x^*}.\\
        &= \frac{2d}{\ell}\alpha_k^2\| \nabla f(x_k)\|^2 + \frac{L_1^2 d^2}{2 \ell}\alpha_k^2 h_k^2\\ 
        &\quad+\frac{L_1 d}{2}\alpha_k h_k +  \frac{L_1 d}{2} \alpha_k h_k   u_k^2\\
        &\quad-  2\alpha_k \scalT{\nabla f(x_k) }{x_k - x^*}.
        \end{aligned}
    \end{equation}
    Since $f$ is convex, %
    by Baillon-Haddad Theorem \cite{Baillon1977}, we derive that
    \begin{equation*}%
        \begin{aligned}
        \E[u_{k+1}^2 | \mathcal{F}_k] - u_k^2 &\leq -2 \Big( \frac{1}{L_1} - \frac{d}{\ell}\alpha_k \Big) \alpha_k \| \nabla f(x_k)\|^2 + \frac{L_1^2 d^2}{2 \ell}\alpha_k^2 h_k^2\\ 
        &+\frac{L_1 d}{2}\alpha_k h_k +  \frac{L_1 d}{2} \alpha_k h_k  u_k^2.
        \end{aligned}
    \end{equation*}
    By Assumption \ref{asm:bounded_stepsize},
    \begin{equation*}%
        \begin{aligned}
        \E[u_{k+1}^2 | \mathcal{F}_k] - u_k^2 &\leq -2 \underbrace{\Big( \frac{1}{L_1} - \frac{d}{\ell} \bar{\alpha} \Big)}_{=: \Delta} \alpha_k \| \nabla f(x_k)\|^2 +  \underbrace{\frac{L_1 d}{2} \alpha_k h_k}_{=: \rho_k}  u_k^2\\ 
        &+ \underbrace{\frac{L_1 d}{2}\alpha_k h_k  + \frac{L_1^2 d^2}{2 \ell}\alpha_k^2 h_k^2}_{=: C_k}.
        \end{aligned}
    \end{equation*}
    Note that $\Delta > 0$. Thus, rearranging the terms 
    \begin{equation*}%
        \begin{aligned}
        \E[u_{k+1}^2 | \mathcal{F}_k] - (1 + \rho_k) u_k^2 + 2 \Delta \alpha_k \| \nabla f(x_k)\|^2 &\leq C_k. 
        \end{aligned}
    \end{equation*}
    Since $\rho_k, C_k \in \ell^{1}$  by Assumption \ref{asm:bounded_stepsize}, Robbins-Siegmund Theorem \cite{ROBBINS1971233} ensures that $(u_k^2)_{k\in\mathbb{N}}$ is convergent and $(\alpha_k \|\nabla f(x_k) \|^2)_{k\in\mathbb{N}} \in \ell^{1}$ a.s.
    Since $f$ is convex, it follows from \eqref{eqn:yng_ineq_cs} that
    \begin{equation*}%
        \begin{aligned}
        \E[u_{k+1}^2 | \mathcal{F}_k] - (1 + \rho_k) u_k^2 &\leq \frac{2d}{\ell}\alpha_k^2\| \nabla f(x_k)\|^2  -  2\alpha_k(f(x_k) - f(x^*)) + C_k.
        \end{aligned}
    \end{equation*}
    Robbins-Siegmund Theorem \cite{ROBBINS1971233} implies that $(\alpha_k (f(x_k) - f(x^*)))_{k\in\mathbb{N}} \in \ell^{1}$ a.s. 
    Assumption \ref{asm:bounded_stepsize} implies that $\alpha_k \not\in \ell^{1}$ therefore 
    \begin{equation}\label{eqn:liminf_smt}
        \liminf\limits_{k} f(x_k) - f(x^*) =0 \text{ a.s.}
    \end{equation}
    By Lemma \ref{lem:fval_dec_con_smooth} and Assumption \ref{asm:bounded_stepsize}, we have that the sequence $\E[f(x_{k + 1})-f(x^*) | \mathcal{F}_k] - (f(x_k)-f(x^*))$ is upper-bounded by a sequence in $\ell^{1}$. Thus, by Robbins-Siegmund Theorem \cite{ROBBINS1971233},  $\lim_{k}(f(x_k)-f(x^*))$  exists a.s. Then, it follows from \eqref{eqn:liminf_smt} that
    \begin{equation*}
        \lim\limits_{k \to \infty} f(x_k) = f(x^*) \quad a.s.
    \end{equation*}
    Moreover, as we saw before, $(\|x_k - x^*\|)_{k\in\mathbb{N}}$ is convergent a.s. for every $x^* \in \argmin f$. Then, by Opial's Lemma \cite{opial_lemma},
    there exists a random variable $\hat{x}$ taking values in $\argmin f$ such that $x_k \to \hat{x}$ a.s.
\end{proof}

\begin{lemma}[Gradient bound: convex smooth setting]\label{lem:bihari}
    Suppose that Assumptions \ref{asm:l_smooth} and \ref{asm:bounded_stepsize} hold, and assume $f$ to be convex. Let $(x_k)_{k\in\mathbb{N}}$ be the sequence generated by Algorithm \ref{alg:ozd}. 
    Then, for every $k \in \N$ and every $x^* \in \argmin f$,
    \begin{equation*}
     \sum\limits_{i = 0}^k \alpha_i \E[\|\nabla f(x_i) \|^2] \leq \frac{1}{2\Delta} \Big(S_k  + \sum\limits_{i = 0}^k \rho_i \sqrt{\E[\| x_i - x^* \|^2]} \Big),
    \end{equation*}
    and
    \begin{equation*}
        \sqrt{\E[\| x_k - x^* \|^2]} \leq \sqrt{S_{k - 1}} + \sum\limits_{i = 0}^k \rho_i,
    \end{equation*}
    where  
    \begin{equation*}
    \begin{aligned}
        \Delta &:= \Big(\frac{1}{L_1} - \frac{d}{\ell} \bar{\alpha} \Big) \,, \quad S_k := \| x_0 - x^* \| + \sum\limits_{i = 0}^k C_i \,\\
        C_k  &:= \frac{L_1^2 d^2}{2\ell} \alpha_k^2 h_k^2 \quad \text{and} \quad \rho_k := L_1 d \alpha_k h_k.
    \end{aligned}
    \end{equation*}
\end{lemma}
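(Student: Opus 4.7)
The plan is to start from the one-step recursion on $u_k^2 := \|x_k - x^*\|^2$ already proved as the first inequality of Lemma~\ref{lem:smt_convergence}, reduce it to a clean form using convexity and the Baillon--Haddad theorem, then telescope to get the first bound, and finally run a discrete Bihari--Gronwall argument to get the second bound.

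First, I would apply Baillon--Haddad to the inner product term. Since $f$ is convex and $L_1$-smooth, and $\nabla f(x^*) = 0$, Baillon--Haddad gives
\begin{equation*}
  -2\alpha_k \langle \nabla f(x_k), x_k - x^* \rangle \leq -\frac{2\alpha_k}{L_1} \|\nabla f(x_k)\|^2.
\end{equation*}
Plugging this into the first inequality of Lemma~\ref{lem:smt_convergence} and using $\alpha_k \leq \bar{\alpha}$ produces a single-step bound
\begin{equation*}
  \E[u_{k+1}^2 \mid \mathcal{F}_k] \leq u_k^2 - 2\Delta\alpha_k \|\nabla f(x_k)\|^2 + C_k + \rho_k u_k,
\end{equation*}
where $C_k, \rho_k, \Delta$ are the quantities defined in the statement and $\Delta > 0$ by Assumption~\ref{asm:bounded_stepsize}. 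Taking total expectations and using Jensen's inequality $\E[u_k] \leq \sqrt{\E[u_k^2]}$ gives
\begin{equation*}
  \E[u_{k+1}^2] - \E[u_k^2] \leq -2\Delta\alpha_k \E[\|\nabla f(x_k)\|^2] + C_k + \rho_k \sqrt{\E[u_k^2]}.
\end{equation*}

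For the first inequality of the lemma, I would telescope the previous display over $i = 0, \ldots, k$, drop the nonnegative term $\E[u_{k+1}^2]$ on the left, and recognize $\|x_0 - x^*\|^2 + \sum_{i=0}^k C_i = S_k$ to obtain
\begin{equation*}
  2\Delta \sum_{i=0}^k \alpha_i \E[\|\nabla f(x_i)\|^2] \leq S_k + \sum_{i=0}^k \rho_i \sqrt{\E[u_i^2]},
\end{equation*}
which is exactly the first claim after dividing by $2\Delta$.

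For the second inequality, I would drop the non-positive gradient term in the telescoped recursion to obtain, for every $k$,
\begin{equation*}
  \E[u_k^2] \leq S_{k-1} + \sum_{i=0}^{k-1} \rho_i \sqrt{\E[u_i^2]}.
\end{equation*}
The hard part is to turn this into a \emph{pointwise} bound on $\sqrt{\E[u_k^2]}$, since the right-hand side depends on all earlier values. The trick I would use is to set $M := \max_{0\leq i \leq k} \sqrt{\E[u_i^2]}$; replacing every $\sqrt{\E[u_i^2]}$ on the right by $M$ (and monotonicity of $S_{j-1}$ in $j$) yields $M^2 \leq S_{k-1} + M \sum_{i=0}^{k-1}\rho_i$, a quadratic inequality in $M$. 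Solving this quadratic and using $\sqrt{a+b} \leq \sqrt{a} + \sqrt{b}$ produces $M \leq \sqrt{S_{k-1}} + \sum_{i=0}^{k-1}\rho_i \leq \sqrt{S_{k-1}} + \sum_{i=0}^{k}\rho_i$, which gives the desired bound since $\sqrt{\E[u_k^2]} \leq M$. The only subtlety is to set up the maximum over indices $\leq k$ (rather than $\leq k-1$) so that the bound applies at index $k$ itself; this is handled automatically by the observation that the iterated inequality holds for every $j \leq k$.
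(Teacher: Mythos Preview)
Your proposal is correct and matches the paper's approach: start from the first inequality of Lemma~\ref{lem:smt_convergence}, apply Baillon--Haddad to replace $-2\alpha_k\langle\nabla f(x_k),x_k-x^*\rangle$ by $-\tfrac{2\alpha_k}{L_1}\|\nabla f(x_k)\|^2$, take full expectations with Jensen, telescope to obtain the first bound, and then run a discrete Bihari-type argument on the recursion $u_{k+1}^2 \leq S_k + \sum_{i=0}^k \rho_i u_i$ for the second. The only cosmetic difference is that the paper invokes the discrete Bihari lemma from \cite{kozak2021zeroth} for that last step, whereas you prove it inline via the max trick $M := \max_{i\leq k}\sqrt{\E[u_i^2]}$ and the resulting quadratic inequality; the two are equivalent.
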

\begin{proof}
    By Lemma \ref{lem:smt_convergence} we derive 
    \begin{equation*}        \begin{aligned}
        \E[\|x_{k+1} - x^* \|^2 | \mathcal{F}_k] - \|x_k - x^*\|^2 &\leq \frac{2d}{\ell}\alpha_k^2\| \nabla f(x_k)\|^2 + C_k\\
        &+\rho_k \| x_k-x^*\| -  2\alpha_k \scalT{\nabla f(x_k) }{x_k - x^*}.
        \end{aligned}
    \end{equation*}
    By Baillon-Haddad Theorem and Assumption \ref{asm:bounded_stepsize},
    \begin{equation*}        \begin{aligned}
        \E[\|x_{k+1} - x^* \|^2 | \mathcal{F}_k] - \|x_k - x^*\|^2 &\leq -2 \Delta\alpha_k\| \nabla f(x_k)\|^2 + C_k + \rho_k \| x_k-x^*\|.
        \end{aligned}
    \end{equation*}
    Let $u_k := \sqrt{\E[\| x_k - x^* \|^2]}$. Taking the full expectation, by Jensen inequality we have
    \begin{equation*}        \begin{aligned}
        u_{k + 1}^2 - u_k^2 &\leq  - 2 \Delta \alpha_k \E[\|\nabla f(x_k) \|^2] + \rho_k u_k + C_k.
        \end{aligned}
    \end{equation*}
    Summing the previous inequality from $i = 0, \cdots, k$, we get
    \begin{equation}\label{eqn:bih_1}        \begin{aligned}
        u_{k + 1}^2  + 2 \Delta  \sum\limits_{i = 0}^k \alpha_i \E[\|\nabla f(x_i) \|^2] &\leq \underbrace{u_0^2  + \sum\limits_{i = 0}^k C_i}_{=:S_k} + \sum\limits_{i = 0}^k \rho_i u_i.
        \end{aligned}
    \end{equation}
    Since $u_k$ is non-negative, the first claim of the lemma follows.
    Since  $\Delta >0$, $\rho_k\geq 0$, $S_k$ is non decreasing, and $S_k \geq u_0^2$ in \eqref{eqn:bih_1}, then 
    \begin{equation*}
        u_{k + 1}^2  \leq S_k + \sum\limits_{i = 0}^k \rho_i u_i.
    \end{equation*}
    Thus, the (discrete) Bihari's Lemma \cite[Lemma 9.8]{kozak2021zeroth} yields
    \begin{equation*}
        u_{k + 1} \leq \frac{1}{2} \sum\limits_{i = 0}^k \rho_i + \Big[S_k + \Big( \frac{1}{2} \sum\limits_{i = 0}^k \rho_i \Big)^2 \Big]^{1/2} \leq \sqrt{S_k} + \sum\limits_{i = 0}^k \rho_i,
    \end{equation*}
    concluding the proof.
\end{proof}

\section{Proofs of Main Results}\label{app:main_thm_proofs}

\subsection{Proof of Theorem \ref{thm:nonsmooth_convex_rates}}
    By Lemma \ref{lem:nonsmt_convergence},
    \begin{equation*}
    \begin{aligned}
        \E[\|x_{k + 1} -x^* \|^2 | \mathcal{F}_k] - \|x_k - x^*\|^2+2 \alpha_k (f(x_k) - f(x^*))&\leq 2c\frac{L_0^2 d}{\ell} \alpha_k^2  + 2 L_0 \alpha_k h_k.
    \end{aligned}
\end{equation*}
    Rearranging the terms, taking the full expectation, and summing the first $k$ iterations
    \begin{equation*}
        \sum\limits_{i = 0}^k \alpha_i \E[(f(x_i) - f(x^*))] \leq \frac{\|x_0 - x^* \|^2}{2} + c \frac{d L_0^2}{\ell}\sum\limits_{i = 0}^k \alpha_i^2 + L_0 \sum\limits_{i = 0}^k \alpha_i h_i.
    \end{equation*}
    Let $\bar{x}_k := \sum\limits_{i =0}^k \alpha_i x_i/(\sum\limits_{i = 0}^k \alpha_i)$. Dividing by $\sum\limits_{i = 0}^k \alpha_i$ and observing that by convexity we have
    \begin{equation*}
        \E[f(\bar{x}_k) - \min f] \leq \frac{\sum\limits_{i = 0}^k \alpha_i \E[(f(x_i) - f(x^*))]}{\sum\limits_{i = 0}^k \alpha_i},
    \end{equation*}
    we get the first claim. Under Assumption \ref{asm:nonsmt_param_conv}, the second claim holds by Lemma \ref{lem:nonsmt_convergence}.

\subsection{Proof of Corollary \ref{cor:nonsmooth_convex_rates_1}}
    By Theorem \ref{thm:nonsmooth_convex_rates}, 
    \begin{equation*}
    \E[f(\bar{x}_k) - \min f] \leq \frac{1}{\sum\limits_{i = 0}^k \alpha_i} \Bigg( \frac{\| x_0 - x^*\|^2}{2} + c \frac{d L_0^2}{\ell}\sum\limits_{i = 0}^k \alpha_i^2 + L_0 \sum\limits_{i = 0}^k \alpha_i h_i \Bigg).
    \end{equation*}
    Replacing $\alpha_k$ and $h_k$ with the sequences in the statement,
    \begin{equation*}
    \E[f(\bar{x}_{k}) - f(x^*)] \leq  \frac{C_1}{\alpha k^{1 - \theta}} + \frac{C_2}{k^{\rho}}h + \frac{d}{\ell} \frac{C_3}{k^{\theta}} \alpha,
    \end{equation*}
    with
    \begin{equation*}
        C_1 := \frac{(1 - \theta) \| x_0 - x^*\|^2}{2},\quad \quad C_2 := \frac{L_0 (1 - \theta)}{(1 - \theta - \rho)}  \quad \text{and} \quad  C_3 := \frac{c L_0^2 (1 - \theta)}{(1 - 2\theta)}.
    \end{equation*}
    The second point of the corollary can be proved replacing $\alpha_k = \alpha$ and $h_k = h$. 
 Now, to prove the third point, fix $\varepsilon \in (0, 1)$. Since we want $\mathbb{E}[f(\bar{x}_k) - f(x^*)] \leq \varepsilon$, we impose 
  \begin{equation*}
     \frac{\|x_0 - x^*\|^2}{2 \alpha k} + \frac{cd L_0^2}{\ell}\alpha + L_0 h \leq \varepsilon.
 \end{equation*}
 Choosing $h_k = h \leq \frac{\varepsilon}{2 L_0}$, to get the previous inequality it is sufficient to impose
 \begin{equation*}
      \frac{\|x_0 - x^*\|^2}{2 \alpha k} + \frac{cd L_0^2}{\ell}\alpha  \leq \frac{\varepsilon}{2}.
 \end{equation*}
 We fix a priori a number of iterations $K$ and we minimize the left handside with respect to $\alpha$, obtaining 
\begin{equation*}
        \alpha = \sqrt{\frac{\ell}{d}} \frac{\| x_0 - x^* \|}{\sqrt{2 c K} L_0}.
\end{equation*} 
 Thus, for $h_k = h \leq \frac{\varepsilon}{2 L_0}$, $\alpha$ as above and 
\begin{equation*}
    K \geq \frac{8 \| x_0 - x^* \|^2 L_0^2 c d}{\ell \varepsilon^2},
\end{equation*}
we have $\E[f(\bar{x}_{k}) - f(x^*)] \leq \varepsilon$. Note that, since the computation of the surrogate requires $2\ell$ function evaluations, to ensure an error of $\varepsilon$ we need to perform a number of function evaluations of the order
\begin{equation*}
    \mathcal{O}(d \varepsilon^{-2}).
\end{equation*}
This concludes the proof.

\subsection{Proof of Theorem \ref{thm:non_conv_non_smooth_rates}}

    By Lemma \ref{lem:fun_dec_nonsmooth},
    \begin{equation*}
        \E[f_{h}(x_{k + 1}) | \mathcal{F}_k] - f_{h}(x_k) \leq - \alpha_k \| \nabla f_{h}(x_k) \|^2 + c \frac{L_0^3 d \sqrt{d}}{\ell} \frac{\alpha_k^2}{h}.
    \end{equation*}
    Taking the full expectation and rearranging the terms,
    \begin{equation*}
        \alpha_k \E[\| \nabla f_{h}(x_k) \|^2] \leq \E[f_{h}(x_k) - f_{h}(x_{k + 1})] + c \frac{L_0^3 d \sqrt{d}}{\ell} \frac{\alpha_k^2}{h}.
    \end{equation*}
    Next sum from $i = 0$ to $i = k$. By definition of $f_h$, we have  $f_h(x)\geq \min f$ for every $x \in \mathbb{R}^d$, thus,
    \begin{equation} \label{e:nablafh}
        \sum\limits_{i = 0}^k \alpha_i \E[\| \nabla f_{h}(x_i) \|^2] \leq \E[f_{h}(x_0) - \min f] + c \frac{L_0^3 d \sqrt{d}}{\ell} \sum\limits_{i = 0}^k \frac{\alpha_i^2}{h}.
    \end{equation}
    The claim follows. %

\subsection{Proof of Corollaries \ref{cor:nonconv_nonsmooth_rates2} and \ref{cor:goldstein}}
    By Theorem \ref{thm:non_conv_non_smooth_rates},

    \begin{equation*}
       \eta_k^{(h)} \leq  \Big( (f_{h}(x_0) - f(x^*)) + c \frac{L_0^3 d \sqrt{d}}{\ell} \sum\limits_{i = 0}^k \frac{\alpha_i^2}{h} \Big)/\Big({\sum\limits_{i = 0}^k \alpha_i}\Big).
    \end{equation*}
    Due to the choice of $\alpha_k = \alpha (k + 1)^{-\theta}$ with $\theta \in (1/2, 1)$ and $\alpha > 0$, we get%
    \begin{equation*}
        \eta_k^{(h)} \leq \frac{C_1 }{\alpha (k + 1)^{1 - \theta}} +\frac{C_2 d \sqrt{d} \alpha}{\ell h} \frac{1}{(k + 1)^\theta},
    \end{equation*}
    where 
    \begin{equation*}
        C_1 :=  \|x_0 - x^* \|^2 (1 - \theta) \quad \text{and} \quad C_2 := \frac{c L_0^3 (1 - \theta)}{(1 - 2\theta)}.
    \end{equation*}
    If we choose $\alpha_k=\alpha$, we derive
    \begin{equation}\label{eqn:cor_nonsmooth_nonconv_1}
        \eta^{(h)}_k \leq \frac{f_h(x_0) - \min f}{\alpha k} +\frac{c L_0^3 d \sqrt{d} \alpha}{\ell h}.
    \end{equation}
    If we  fix a priori a number of iteration $K$ and we minimize the right handside with respect to $\alpha$, we get
    \begin{equation*}
         \hat{\alpha} = \sqrt{\frac{(f_h(x_0) - f(x^*))\ell h}{K c L_0^3 d \sqrt{d}}}.
    \end{equation*}
    Let $\varepsilon \in (0, 1)$. Choosing $\alpha = \hat{\alpha}$, we get $\eta_K^{(h)} \leq \varepsilon$ for
    \begin{equation}\label{eqn:cor_nonsmooth_nonconv_2}
        K \geq 4\frac{(f_h(x_0) - f(x^*))c L_0^3 d \sqrt{d}}{ \ell h} \varepsilon^{-2}.
    \end{equation}
    This concludes the proof of Corollary \ref{cor:nonconv_nonsmooth_rates2}. To prove Corollary \ref{cor:goldstein}, we fix a maximum number of iterations $K \in \N$ and consider the random variable $I$ of the statement. Let $\partial_h f$ be the $h$-Goldstein subdifferential defined in Definition \ref{def:goldstein_subdiff}. It follows from \cite[Theorem 3.1]{NEURIPS2022_a78f142a} that $ \nabla f_h (x_I)\in\partial_h f(x_I)$ almost surely, therefore
    \begin{equation*}
        \E_I \min [\|\eta\|^2\,:\, \eta \in \partial_h f(x_I) ] \leq \E_I\E[\| \nabla f_h (x_I)\|^2].  
    \end{equation*}
    In addition, Theorem \ref{thm:non_conv_non_smooth_rates} yields%
    \begin{eqnarray*}
        \E_I\E_G[\| \nabla f_h (x_I) \|^2] &=& \bigg(\sum\limits_{j = 0}^{K - 1} \alpha_j \E_G[\| \nabla f_{h}(x_j) \|^2]\bigg)/\sum\limits_{j = 0}^{K - 1} \alpha_j\\
        &\leq& \E[f_{h}(x_0) - \min f] + c \frac{L_0^3 d \sqrt{d}}{\ell} \sum\limits_{i = 0}^k \frac{\alpha_i^2}{h}.
    \end{eqnarray*}
    
    Thus,
    \begin{equation*}
        \E_I[\|\eta\|^2\,:\, \eta \in \partial_h f(x_I) ] \leq \E_I\E[\| \nabla f_h (x_I)\|^2] = \eta^{(h)}_k.  
    \end{equation*}
    Hence, for $\alpha = \bar{\alpha}$ and $K$ chosen s.t. inequality \eqref{eqn:cor_nonsmooth_nonconv_2} holds, we have 
    \begin{equation*}
        \E_I[\|\eta\|^2\,:\, \eta \in \partial_h f(x_I) ]  \leq \varepsilon.
    \end{equation*}
    This concludes the proof.

\subsection{Proof of Theorem~\ref{thm:rates_convex_smooth}}
By Lemma \ref{lem:smt_convergence},
    \begin{equation*}        
    \begin{aligned}
        \E [ \| x_{k + 1} - x^*\|^2 | \mathcal{F}_k] - \| x_k - x^* \|^2 &\leq  \frac{2d}{\ell} \alpha_k^2 \|\nabla f(x_k) \|^2  +2\alpha_k \scalT{\nabla f(x_k)}{x^* - x_k}\\ 
        &+ \underbrace{L_1 d\alpha_k h_k}_{=:\rho_k} \|x^* - x_k \| + \underbrace{\frac{L_1^2 d^2}{2\ell} \alpha_k^2 h_k^2}_{=: C_k}.
        \end{aligned}
    \end{equation*}
By convexity,
    \begin{equation*}        \begin{aligned}
        \E [ \| x_{k + 1} - x^*\|^2 | \mathcal{F}_k] - \| x_k - x^* \|^2 &\leq  \frac{2d}{\ell} \alpha_k^2 \|\nabla f(x_k) \|^2  - 2\alpha_k (f(x_k) - f(x^*))\\ 
        &+ \rho_k \|x^* - x_k \| + C_k.
        \end{aligned}
    \end{equation*}
Rearranging the terms and taking the full expectation,
    \begin{equation*}        \begin{aligned}
        2 \E[\alpha_k (f(x_k) - f(x^*))] &\leq \E [ \| x_k - x^* \|^2 - \| x_{k + 1} - x^*\|^2] +  \frac{2d}{\ell} \alpha_k^2 \E[ \|\nabla f(x_k) \|^2]  \\ 
        &+ \rho_k \E[\|x^* - x_k \|] + C_k.
        \end{aligned}
    \end{equation*}
Since $\E[\|x^* - x_k \|] = \E[\sqrt{\|x^* - x_k \|^2}]$, Jensen's inequality implies that
    \begin{equation*}        \begin{aligned}
        2 \E[\alpha_k (f(x_k) - f(x^*))] &\leq \E [ \| x_k - x^* \|^2 - \| x_{k + 1} - x^*\|^2] +  \frac{2d}{\ell} \alpha_k^2 \E[ \|\nabla f(x_k) \|^2]  \\ 
        &+ \rho_k \sqrt{\E[\|x^* - x_k \|^2]} + C_k.
        \end{aligned}
    \end{equation*}
Denoting with $u_k = \E[\|x_k - x^* \|^2]$ and taking the sum from $i = 0$ to $i= k$,
    \begin{equation*}        \begin{aligned}
        2 \sum\limits_{i = 0}^k \alpha_i \E[ f(x_i) - f(x^*)] &\leq \underbrace{u_0^2 + \sum\limits_{i = 0}^k C_i}_{=: S_k} +  \frac{2d}{\ell} \sum\limits_{i=0}^k \alpha_i^2 \E[ \|\nabla f(x_i) \|^2]  + \sum\limits_{i = 0}^k \rho_i u_i\\
        &\leq S_k +  \frac{2d}{\ell} \bar{\alpha}\sum\limits_{i=0}^k \alpha_i \E[ \|\nabla f(x_i) \|^2] + \sum\limits_{i = 0}^k \rho_i u_i,
        \end{aligned}
    \end{equation*}
where the last inequality holds by Assumption \ref{asm:bounded_stepsize}. Let $\Delta := (1/L_1 - (d/\ell)\bar{\alpha})$. By Lemma \ref{lem:bihari}, we have
    \begin{equation*}        \begin{aligned}
        \sum\limits_{i = 0}^k \alpha_i \E[ f(x_i) - f(x^*)] &\leq \frac{1}{2} \Big(S_k + \frac{d\bar{\alpha}}{\Delta \ell} \Big[ S_k + \sum\limits_{i = 0}^k \rho_i u_i \Big] + \sum\limits_{i=0}^k\rho_i u_i \Big)\\
        &= \frac{\ell \Delta + d \bar{\alpha}}{2\ell \Delta} \Big( S_k + \sum\limits_{i = 0}^k \rho_i u_i \Big)\\
        &\leq \frac{\ell \Delta + d \bar{\alpha}}{2\ell \Delta} \Big( S_k + \sum\limits_{i = 0}^k \rho_i (\sqrt{S_i} + \sum\limits_{j = 0}^i \rho_j) \Big).
        \end{aligned}
    \end{equation*}
Let $\bar{x}_k := \sum\limits_{i = 0}^k \alpha_i x_i /(\sum\limits_{i = 0}^k \alpha_i)$. Dividing both sides by $\sum\limits_{i = 0}^k \alpha_i$,  convexity yields
\begin{equation*}
    \E[f(\bar{x}_k) - \min f] \leq \frac{\sum\limits_{i = 0}^k \alpha_i \E[(f(x_i) - f(x^*))]}{\sum\limits_{i = 0}^k \alpha_i}.
\end{equation*}

\subsection{Proof of Corollary \ref{cor:conv_smooth_rates1}}
In this proof, we use the same notation as the one in the proof of Theorem~\ref{thm:rates_convex_smooth}. By the choices of the parameters, we have
\begin{equation*}
\begin{aligned}
    \sum\limits_{i = 0}^k \rho_i \leq C_1 d \alpha h \quad &\text{with} \quad C_1 := \frac{L_1\theta}{\theta - 1},\\
    S_k \leq \|x_0 - x^*\|^2 + C_2 \frac{d^2}{\ell}\alpha^2 h^2 \quad &\text{with}
    \quad C_2:= \frac{L_1^2\theta}{2\theta-1}.
\end{aligned}
\end{equation*}
Thus, using these inequalities in Theorem \ref{thm:rates_convex_smooth}, we get
\begin{equation*}
    \begin{aligned}
    D_k &\leq \frac{\ell \Delta + d \bar{\alpha}}{2\ell\Delta} \Big( \|x_0 - x^*\|^2 + C_2 \frac{d^2\alpha^2 h^2}{\ell} + \sqrt{\|x_0 - x^*\|^2} C_3 d \alpha h + C_4 \frac{d \alpha h}{\sqrt{\ell}}  + C_5 d^2 \alpha^2 h^2  \Big),
    \end{aligned}
\end{equation*}
with
\begin{equation*}
    C_3 := \frac{L_1^2 \theta}{\theta - 1}, \quad C_4 := \frac{L_1^2}{\sqrt{2}}, \quad C_5 := \frac{L_1^2 \theta}{(\theta - 1 )^2}.
\end{equation*}
Dividing by $\sum\limits_{i = 0}^k \alpha_i$, we get
\begin{equation*}
    \E[f(\bar{x}_k) - \min f] \leq \frac{C}{\alpha k}.%
\end{equation*}
Note that by Assumption \ref{asm:bounded_stepsize}, $\alpha < \ell/(d L_1)$, thus $1/\alpha > (dL_1)/\ell$. The algorithm performs $2 \ell$ function evaluations at each iteration. Thus, to guarantee $\E[f(\bar{x}_k) - \min f] \leq \varepsilon$ for $\varepsilon \in (0, 1)$, the algorithm has to perform a number of function evaluations in the order of
\begin{equation*}
    \mathcal{O} ( d \varepsilon^{-1}).
\end{equation*}
Assuming, instead, $\alpha_k \leq \bar{\alpha} < \ell/(2d L_1)$, by Lemma \ref{lem:smt_convergence} we get the last claim; i.e, there exists a random variable $\hat{x}$ taking values in $\argmin f$ s.t. $x_k \to \hat{x}$ a.s.

\subsection{Proof of Theorem \ref{thm:rates_nonconv_smooth}}
Set $C_1 = (dL_1)/2$. It follows from Lemma \ref{lem:fval_dec_con_smooth} that
\begin{equation*}
    \begin{aligned}   
        \E[f(x_{k + 1}) | \mathcal{F}_k] - f(x_k) &\leq - \Big( \frac{1}{2} - \frac{L_1 d}{\ell} \bar{\alpha} \Big) \alpha_k \| \nabla f(x_k) \|^2 + \frac{C_1^2 \alpha_k h_k^2}{2} + \frac{L_1^3 d^2}{4\ell}\alpha_k^2 h_k^2.
    \end{aligned}
\end{equation*}
Taking the full expectation and rearranging the terms, and recalling the definition of $\Delta$,
\begin{equation*}
    \begin{aligned}   
        \Delta \alpha_k \E[\| \nabla f(x_k) \|^2] &\leq \E[f(x_k) - f(x_{k + 1})] + \frac{C_1^2 \alpha_k h_k^2}{2} + \frac{L_1^3 d^2}{4\ell}\alpha_k^2 h_k^2.
    \end{aligned}
\end{equation*}
Summing for $i = 0, \cdots, k$ and observing that $\min f \leq f(x)$ for every $x$,
\begin{equation*}
    \begin{aligned}   
        \Delta \sum\limits_{i = 0}^k \alpha_i \E[\| \nabla f(x_i) \|^2] &\leq f(x_0) - \min f + \sum\limits_{i = 0}^k \frac{C_1^2 \alpha_i h_i^2}{2} + \frac{L_1^3 d^2}{4\ell} \sum\limits_{i = 0}^k\alpha_i^2 h_i^2.
    \end{aligned}
\end{equation*}
Divinding by $\Delta \sum\limits_{i = 0}^k \alpha_i$ we get the claim.

\subsection{Proof of Corollary \ref{cor:nonconv_smooth_rates}}
$(i)$: From the choice of $\alpha_k$ and $h_k$, we have
\begin{equation*}
\sum\limits_{i=0}^k \alpha_i h_i^2\leq   \frac{2\theta\alpha h^2}{2\theta-1}   \qquad  \sum\limits_{i=0}^k\alpha_i^2 h_i^2 \leq  \frac{2\theta \alpha^2 h^2}{2\theta-1}.   
\end{equation*}
It follows from Theorem \ref{thm:rates_nonconv_smooth} that 
    \begin{equation*}
 \begin{aligned}   
 \eta_k \leq \frac{1}{\Delta \alpha k} \Bigg( f(x_0) - \min f + C_1 d^2\alpha h^2  + \frac{C_2\alpha^2 h^2  d^2}{\ell} \Bigg),
         \end{aligned}
    \end{equation*}
with $C_1=\frac{L_1^2\theta}{4(2\theta-1)}$ and $C_2=\frac{L_1^3\theta }{2(2\theta-1)} $.\\
$(ii)$: It follows directly from Theorem~\ref{thm:rates_nonconv_smooth} taking into account that
\begin{equation*}
\sum\limits_{i=0}^k \alpha_i h_i^2= k\alpha h^2,   \qquad  \sum\limits_{i=0}^k\alpha_i^2 h_i^2 = k \alpha^2 h^2,   
\end{equation*}
and setting   $ C_1 =L_1^2/8$ and $C_2 =L_1^3/4$.

\section{Experimental Details}\label{app:exp_details}
In this appendix, we report details on the experiments performed. We implemented every script in Python3 (version 3.9.11) and used numpy (version 1.22.2) \cite{harris2020array} and matplotlib (version 3.5.1) \cite{matplotlib} libraries.

\paragraph*{Machine used to perform the experiments.} In the following table, we describe the features of the machine used to perform the experiments in Section \ref{sec:num_results}.

\begin{table}[H]
  \caption{Machine used to perform the experiments}
  \label{tab:pc}
  \centering
  \begin{tabular}{ll}
    \toprule
    Feature &  \\
    \midrule
    OS & Debian GNU/Linux 11  \\
    CPU(s) & 4 x Intel(R) Core(TM) i7-1165G7 11th Gen @ 2.80GHz     \\
    CPU Core(s) & 4      \\
    RAM & 8 GB\\
    \bottomrule
  \end{tabular}
\end{table}
\paragraph*{Target Functions.} We considered two synthetic target functions: a convex smooth function $f_1$ and a convex non-smooth function $f_2$ defined as follows
\begin{equation*}
    \begin{aligned}
        (\text{Convex Smooth}) \quad &f_1(x) := \frac{1}{2} \|Ax\|^2 \quad \text{with}\quad A \in \R^{d \times d}\\
        (\text{Convex Non-smooth}) \quad &f_2(x) := \| x - \bar{v}\|_1%
    \end{aligned}
\end{equation*}
where $A$ is a random Gaussian matrix (i.e. $A_{i,j} \sim \mathcal{N}(0, 1)$) and $\bar{v} := [0, 1,\cdots, d- 1]^\intercal$.

\paragraph*{Choice of the number of directions.} We report here the details of the first experiment of Section \ref{sec:num_results}. For these experiments, we consider $d = 50$ and we use, for the smooth convex case, the following parameters
\begin{equation*}
    \alpha_k = 0.99 \frac{\ell}{d L_1} \qquad \text{and} \qquad h_k = \frac{10^{-5}}{k + 1}.
\end{equation*}
The constant $L_1$ is computed as the maximum eigenvalue of the matrix $A^\intercal A$. Note that this parameter choice satisfies Assumption \ref{asm:bounded_stepsize}. For the non-smooth target, we used
\begin{equation*}
\alpha_k = \sqrt{\frac{\ell}{d}} k^{-1/2 - 10^{-5}} \qquad \text{and} \qquad h_k = \frac{10^{-7}}{k + 1}.
\end{equation*}
Note that this parameter configuration satisfies Assumption \ref{asm:nonsmt_param_conv}. The maximum number of function evaluations considered is $4000$. The direction matrices $G_k$ are generated with the QR method - see Appendix \ref{app:generate_dir_matrices}.
\paragraph*{Comparison with Finite-difference methods.} In Section \ref{sec:num_results}, we compare finite-difference method with different choice of directions. In order to make a fair comparison we consider only central finite-differences. However, note that Algorithm \ref{alg:ozd} can be modified (in practice) considering computationally cheaper gradient estimators - see Remark \ref{rem:other_diff}. For these experiments, we consider $d = 10$ and $\ell = d$ for methods with multiple directions. The maximum number of function evaluations is $1000$ for both smooth and non-smooth targets and the direction matrices $G_k$ for Algorithm \ref{alg:ozd} are generated with the QR method - see Appendix \ref{app:generate_dir_matrices}. To solve the smooth problem we consider the following parameter choice for every method
\begin{equation*}
    \alpha_k = c \frac{\ell}{d L_1} \qquad \text{and} \qquad h_k = \frac{10^{-7}}{d^2(k + 1)},
\end{equation*}
where $L_1$ is computed taking the maximum eigenvalue of $A^\intercal A$. For Algorithm \ref{alg:ozd} and finite-difference with single and multiple spherical directions $c = 0.99$ while it is equal to $c = 0.11$ for finite-difference with single and multiple Gaussian directions. We made this choice since for finite-difference methods with Gaussian directions we observed divergence for larger choices of $c$ - see Figure \ref{fig:high_gauss}.
\begin{figure}[H]
    \centering
\includegraphics[width=0.4\linewidth]{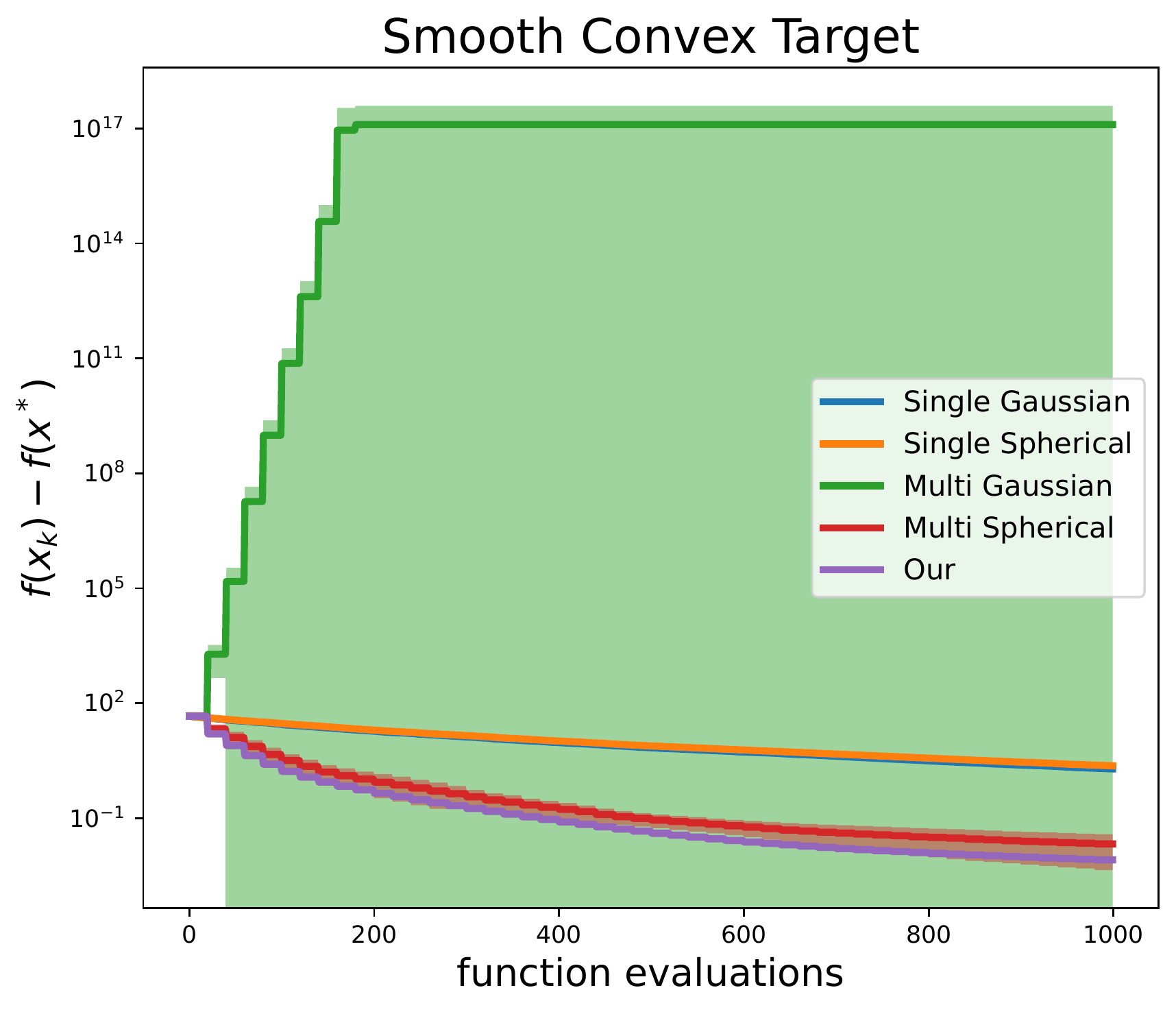}
\includegraphics[width=0.4\linewidth]{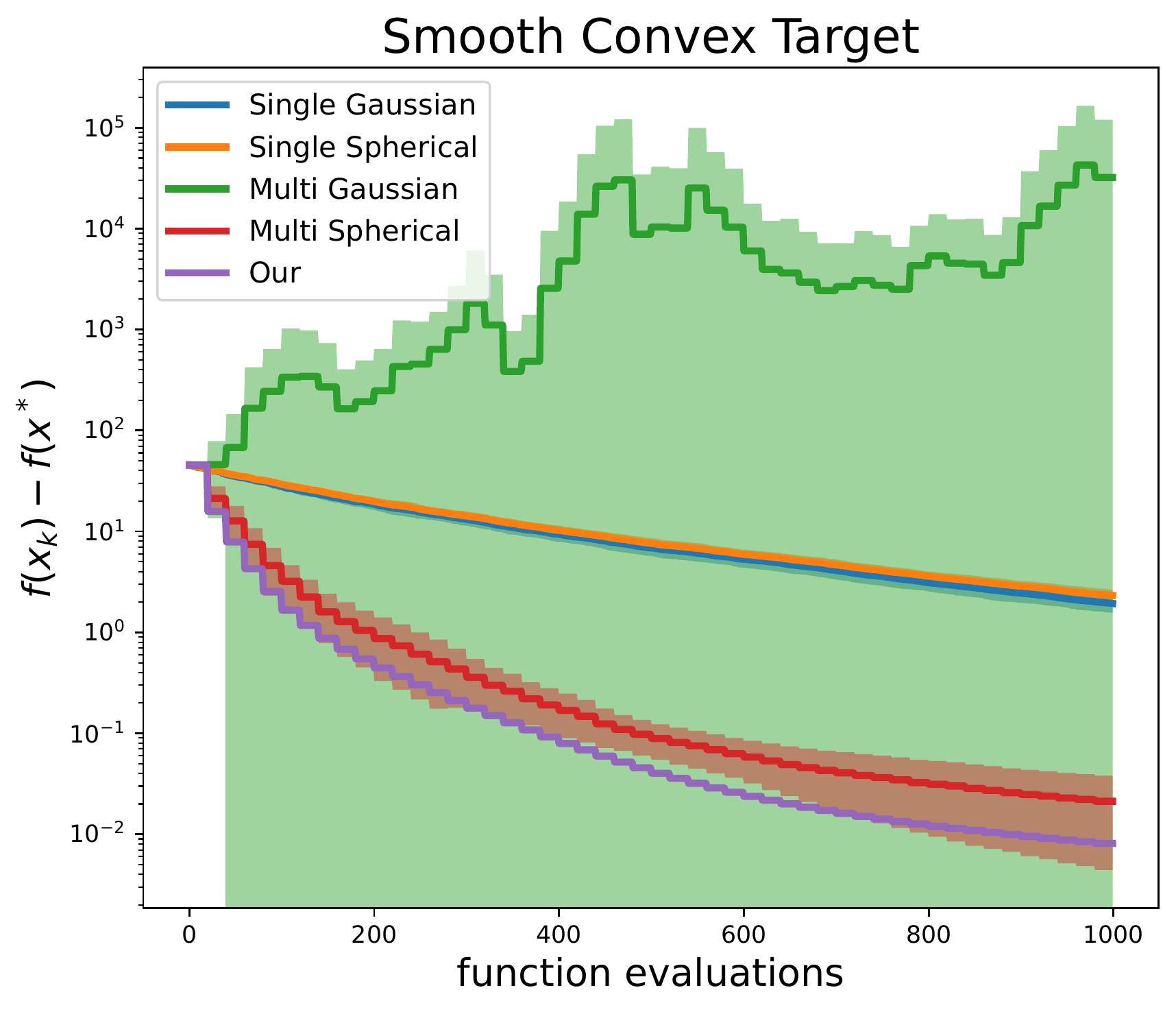}
    \caption{From left to right, comparison of finite-difference methods for smooth convex target with $c=0.99$ and $c=0.2$ for methods with Gaussian directions.}
    \label{fig:high_gauss}
\end{figure}
For the non-smooth convex target, we considered the following parameter choice
\begin{equation*}
    \alpha_k = c \frac{\ell}{d} k^{-1/2 - 10^{-5}}\qquad \text{and} \qquad h_k =\frac{1}{d^2(k + 1)}.
\end{equation*}
For every method, we selected $c = 0.65$ except for the method with multiple Gaussian directions in which we selected $c=0.08$ since it provided better performances - see Figure \ref{fig:diff_c}.
\begin{figure}[H]
    \centering
    \includegraphics[width=0.2\linewidth]{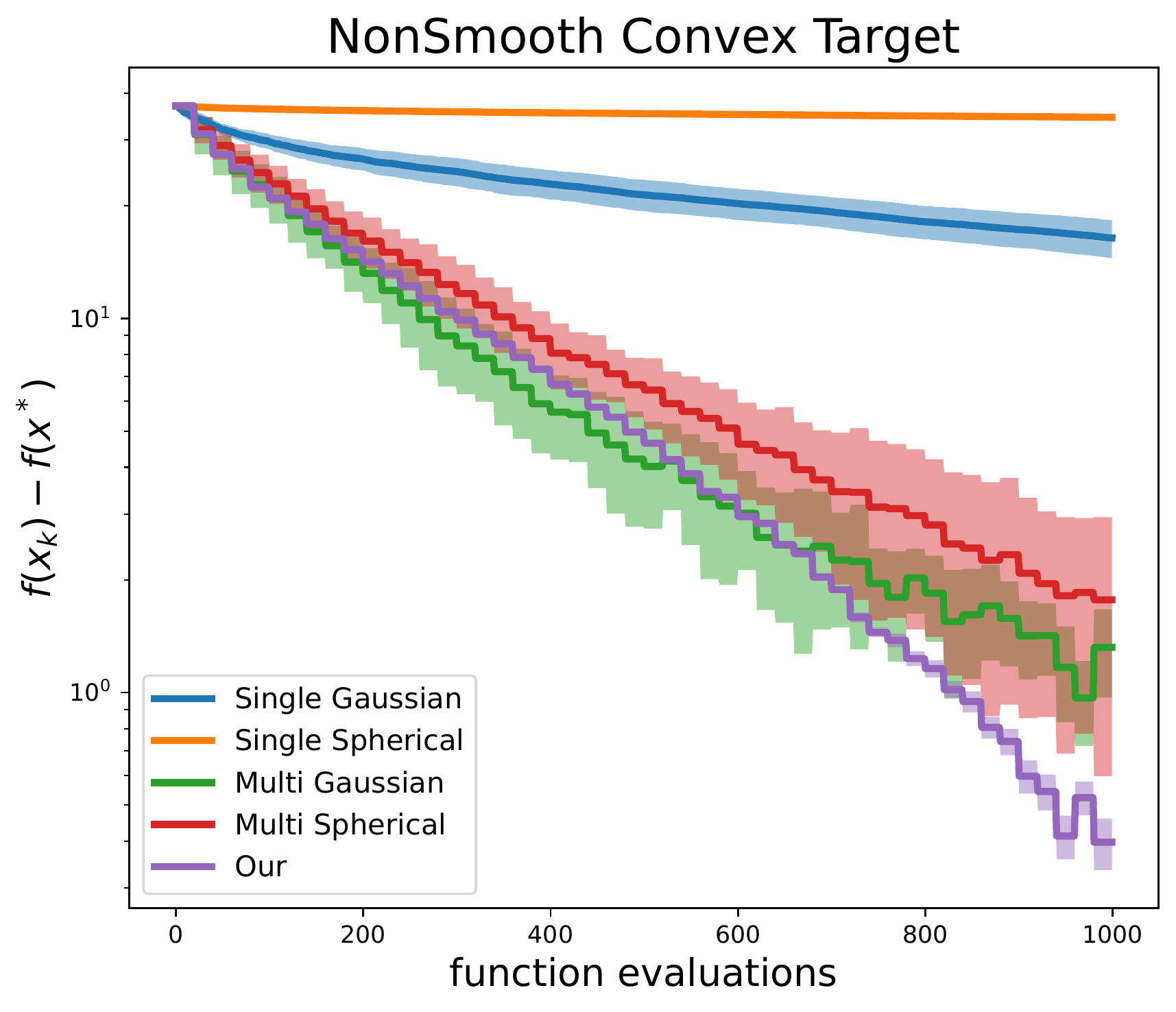}
\includegraphics[width=0.2\linewidth]{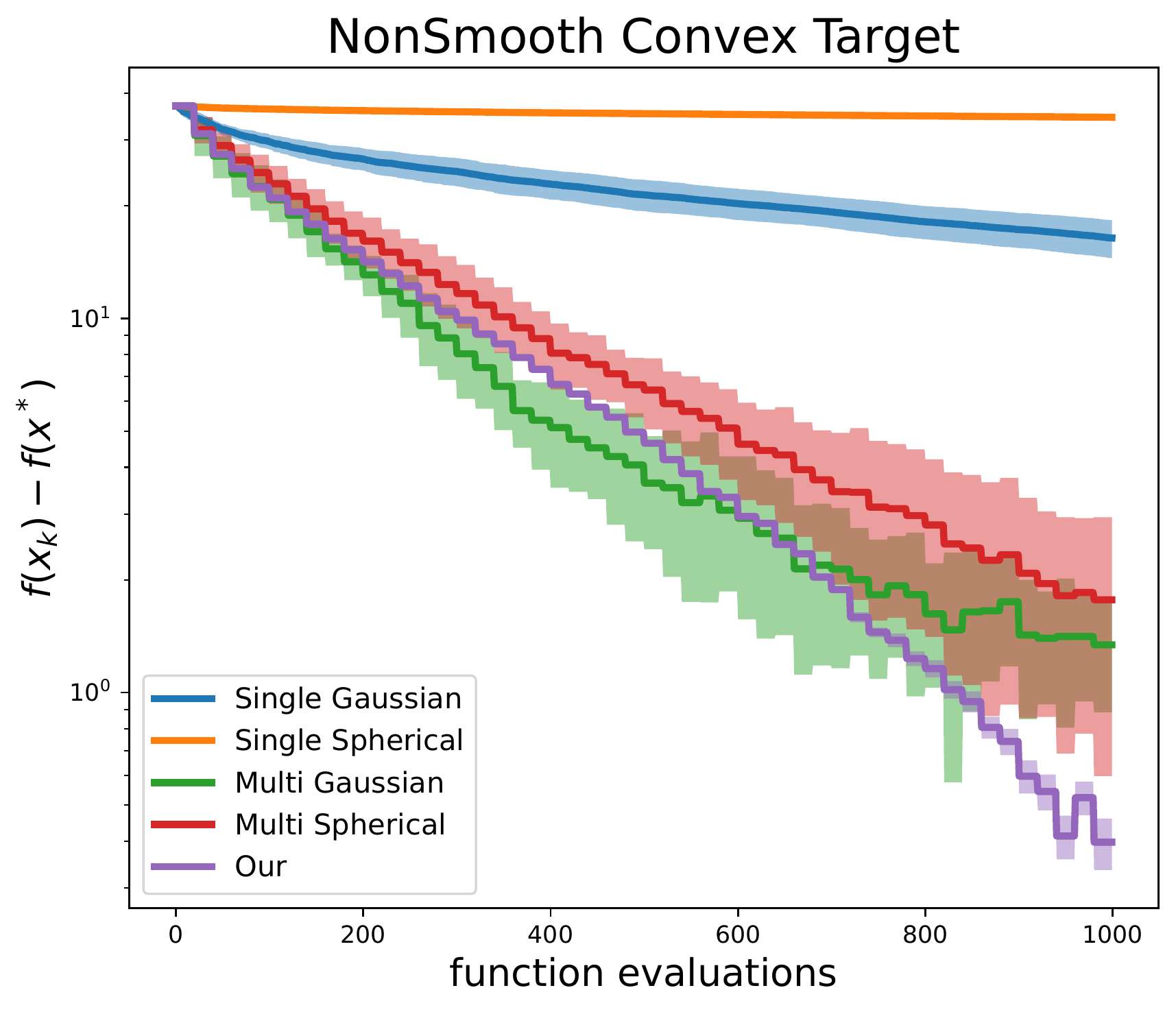}
\includegraphics[width=0.2\linewidth]{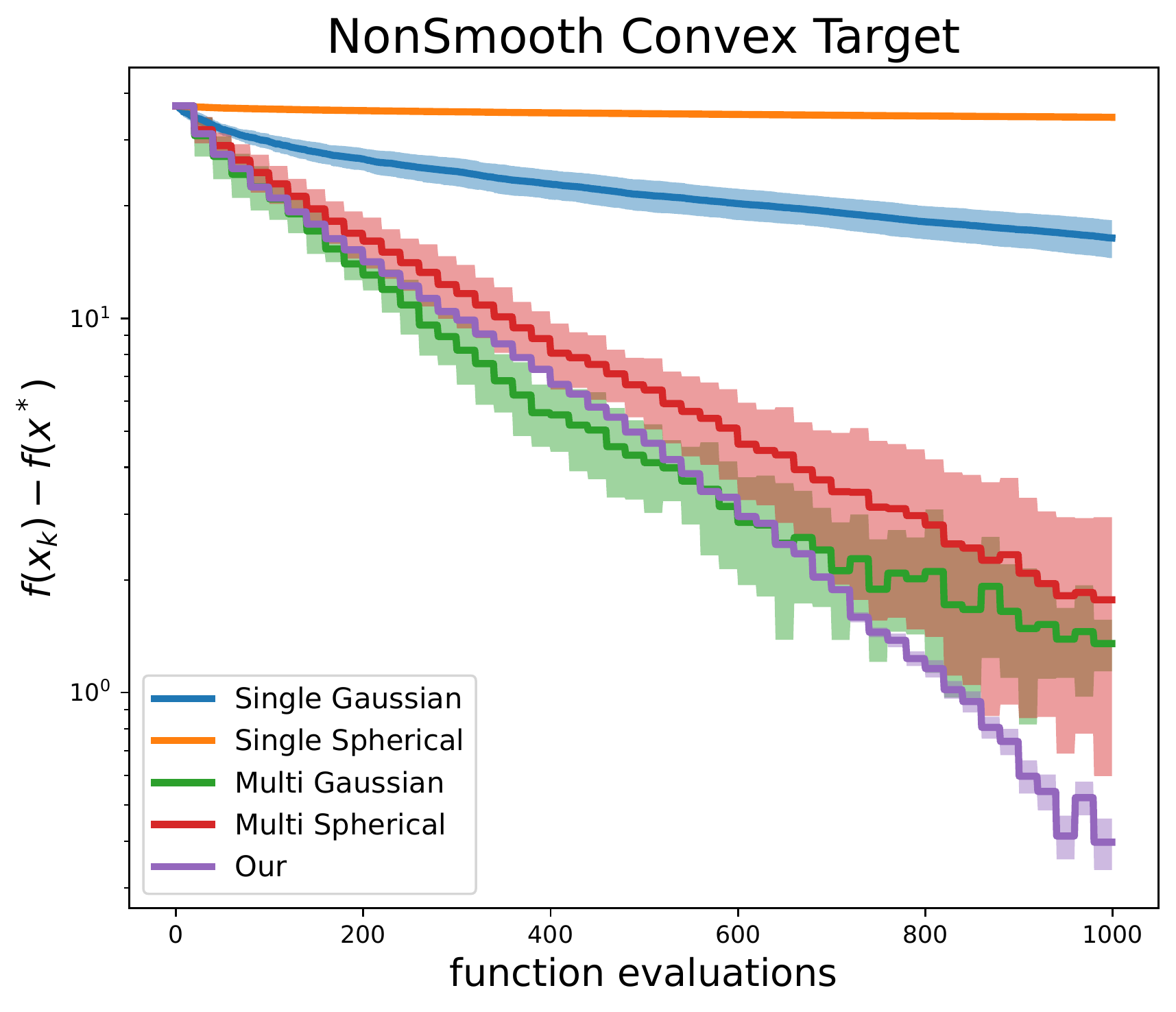}
\includegraphics[width=0.2\linewidth]{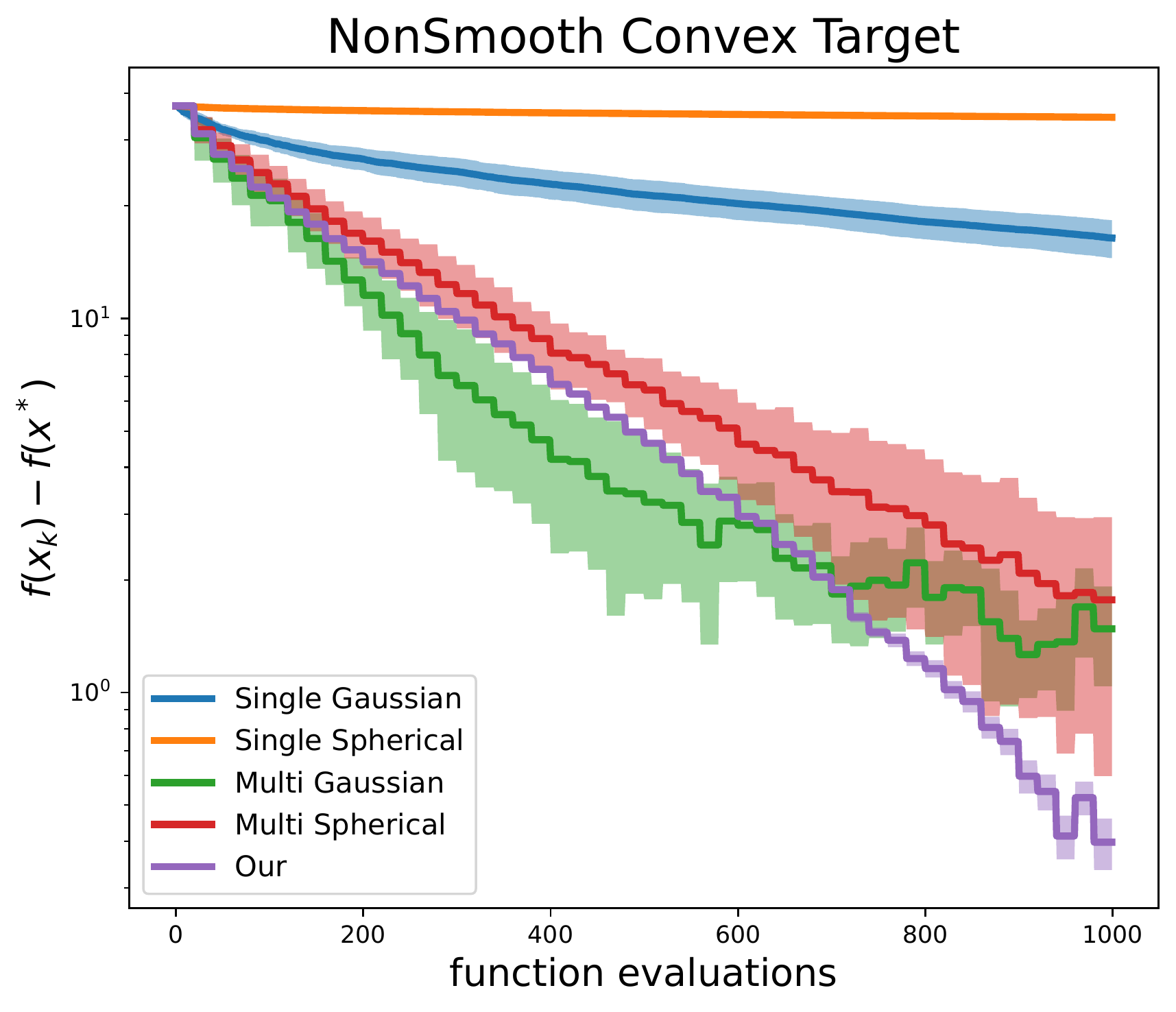}
\includegraphics[width=0.2\linewidth]{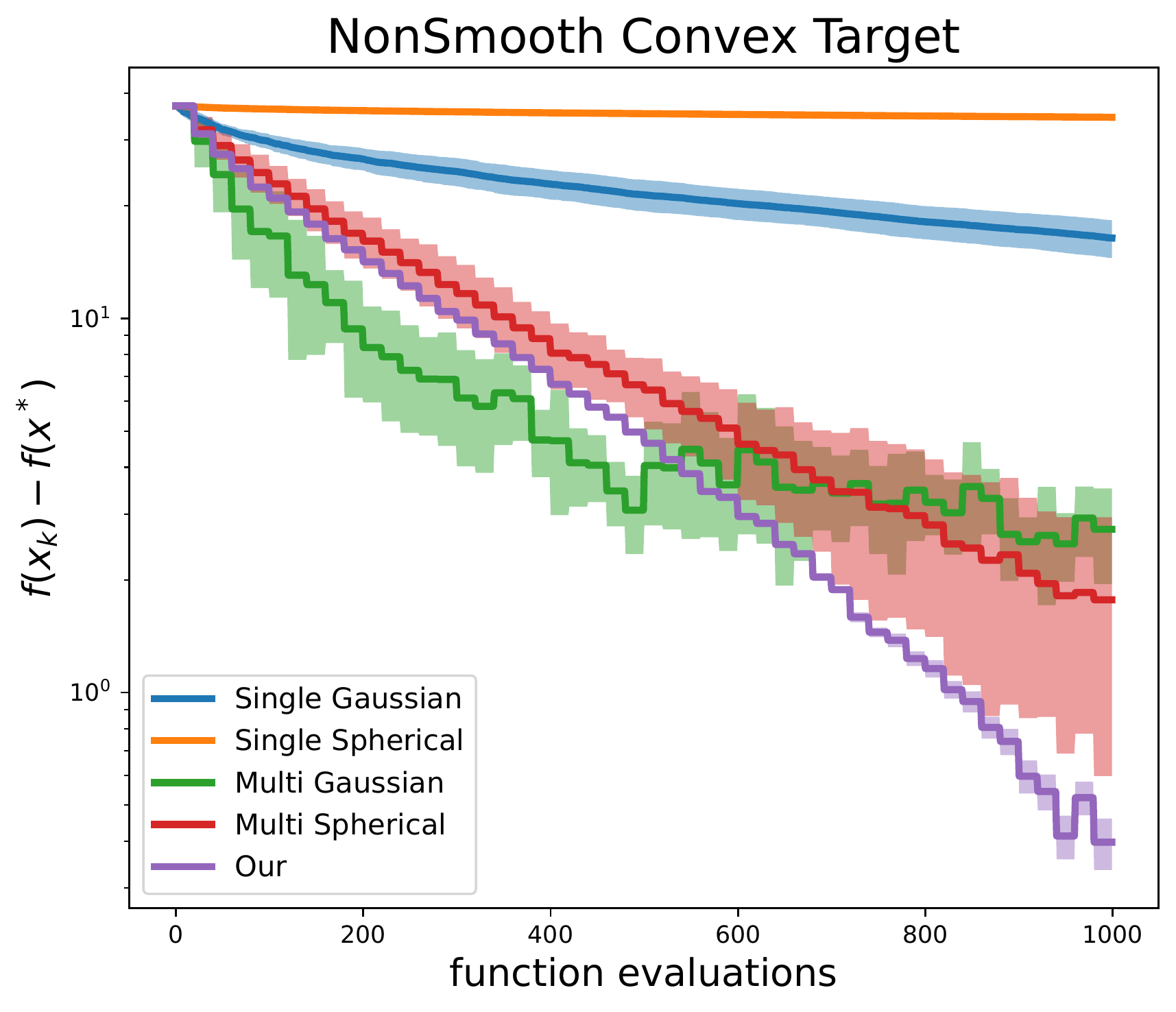}
\includegraphics[width=0.2\linewidth]{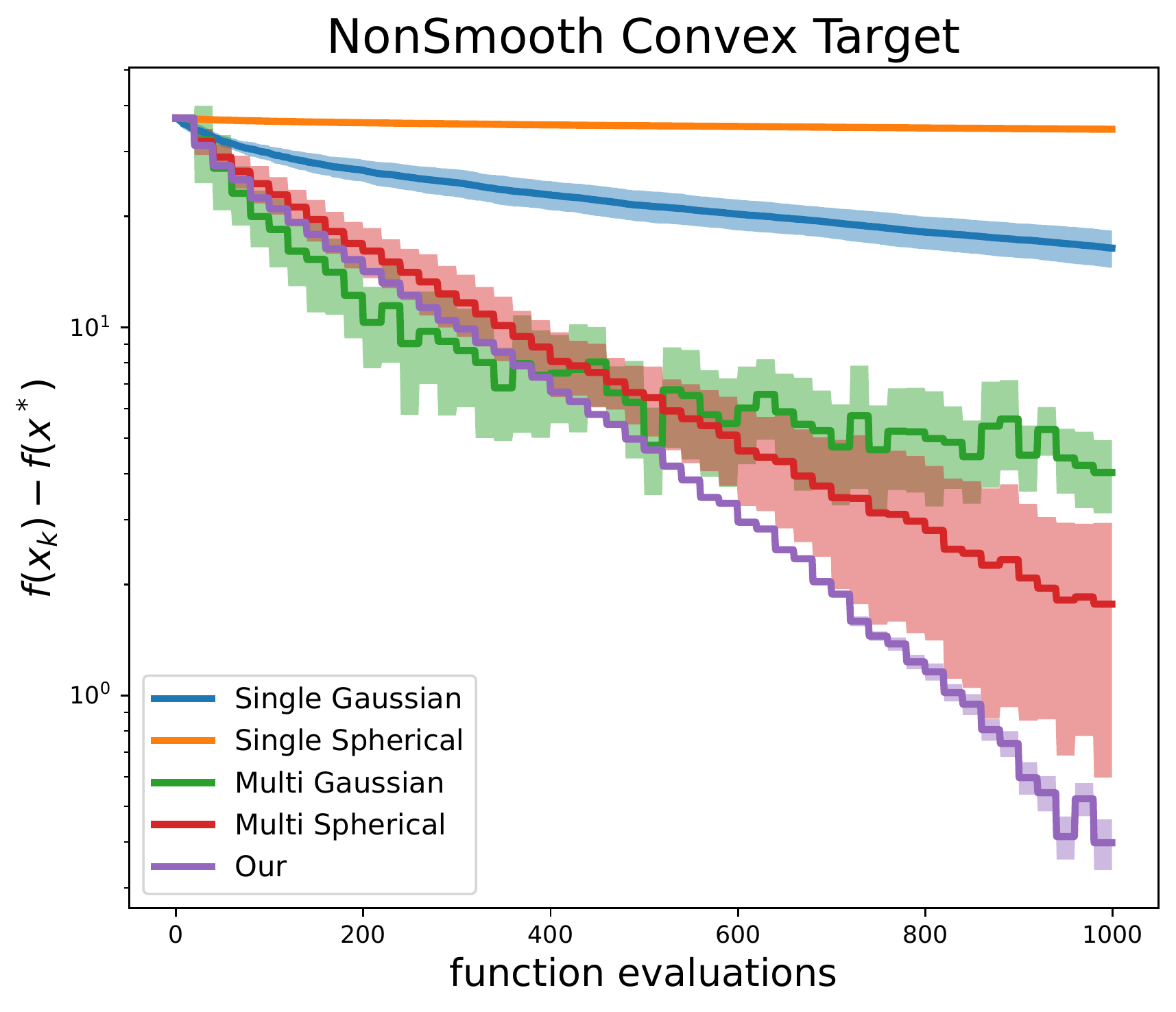}
\includegraphics[width=0.2\linewidth]{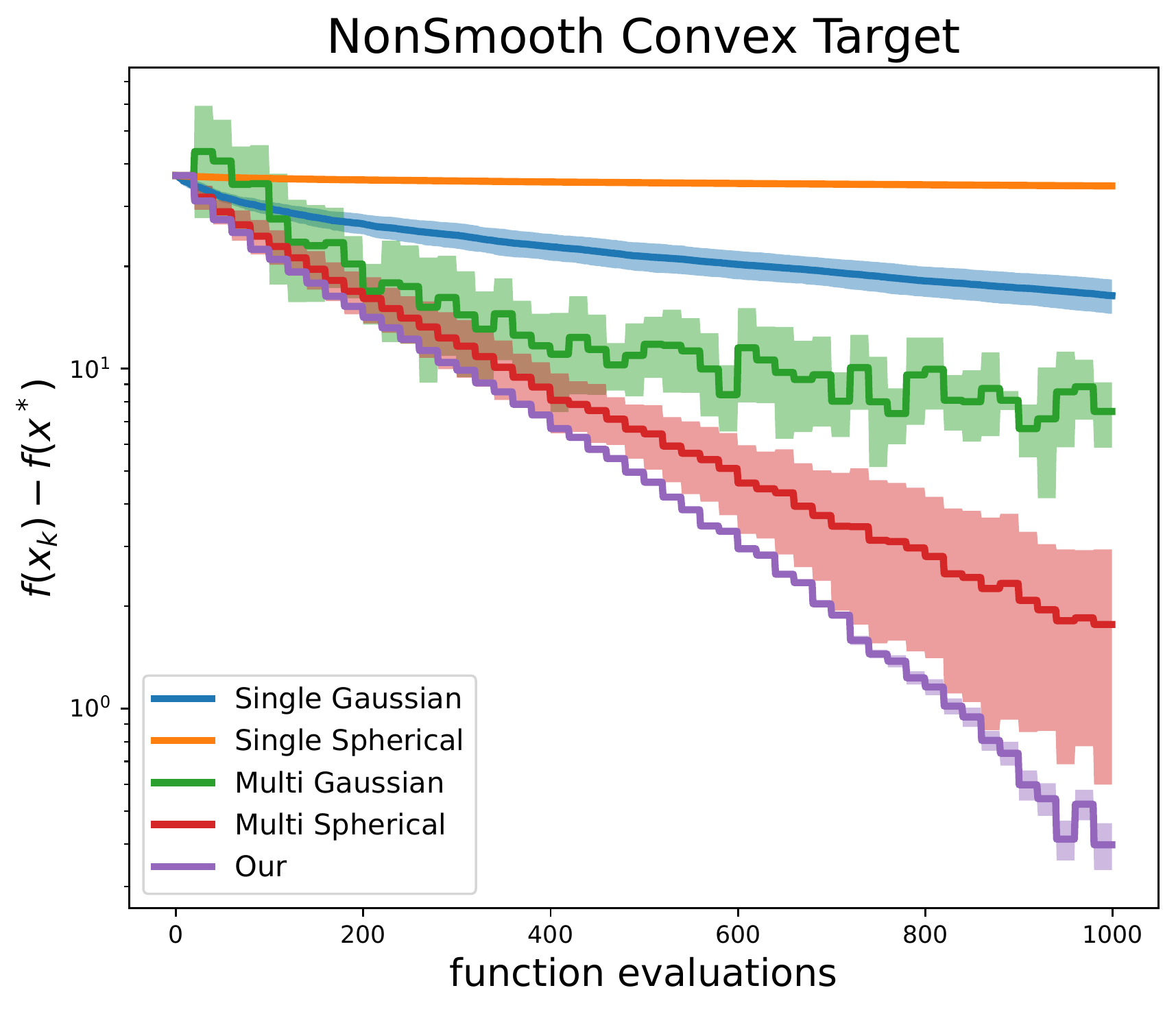}
 \includegraphics[width=0.2\linewidth]{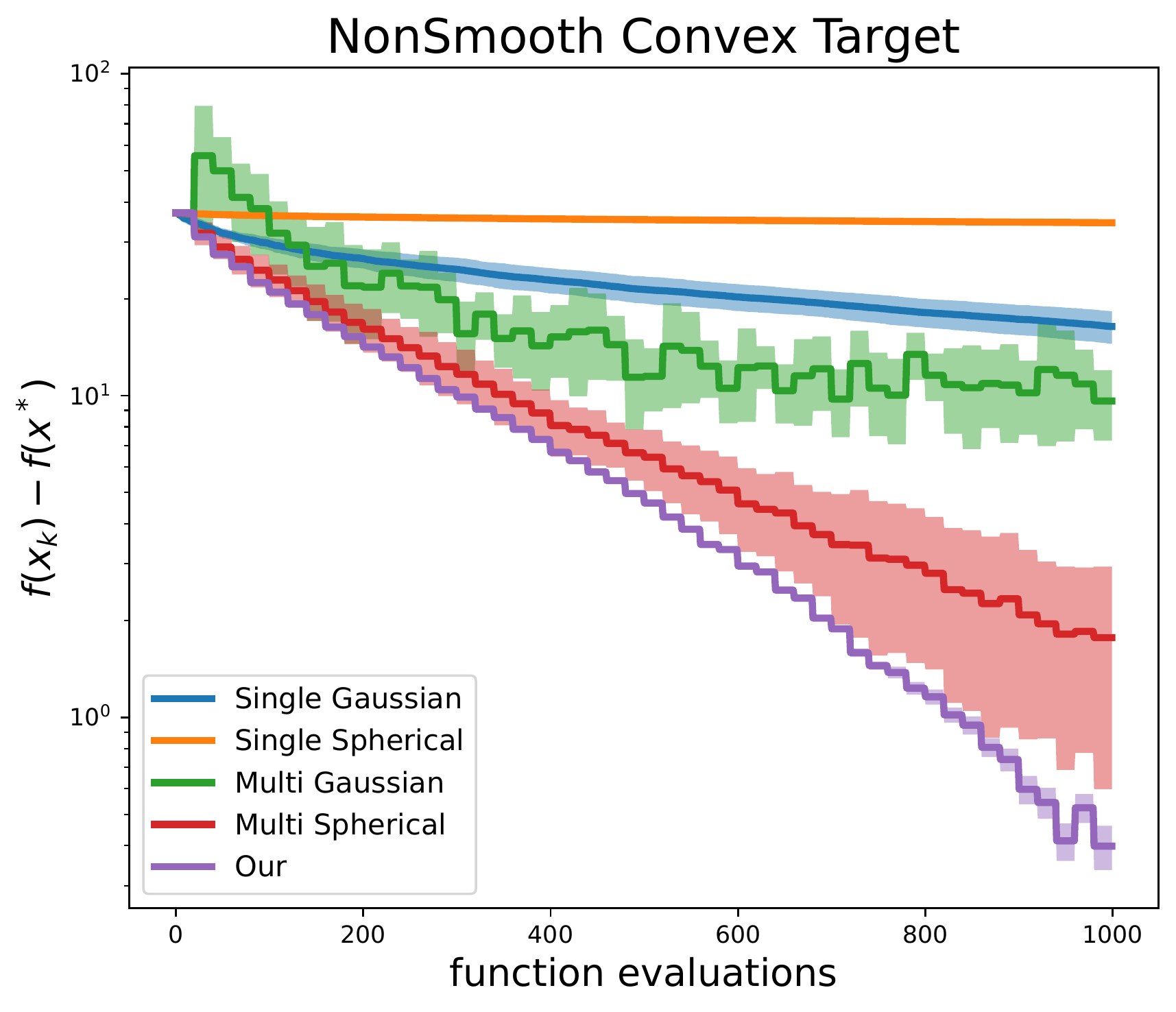}
  
    \caption{From left to right and up to down, comparison of finite difference method with different directions and different values of $c$ for multiple Gaussian directions. The values of $c$ considered are the following $[0.085, 0.089, 0.09,0.1,0.2,0.3,0.5,0.65]$}
    \label{fig:diff_c}
\end{figure}

\section{Techniques to Generate Orthogonal Direction Matrices} \label{app:generate_dir_matrices}
 In the literature, different algorithms were proposed to generate orthogonal matrices - see for instance \cite{genz2000methods,mezzadri2006generate,choromanski2019unifying,hadamard_mat,BJORCK1994297,orth_math_anderson,barvinok2005approximating,rusu2022fast,imp_hadamard_boutsidis} and references therein. Such methods can be used to generate the direction matrices $G_k$ required for the iteration proposed in Algorithm \eqref{alg:ozd}. In this appendix, we briefly discuss three of them.
\paragraph*{QR factorization.} As observed in \cite{kozak2021zeroth,rando2022stochastic}, a way to generate orthogonal consists in generating a random Gaussian matrix $A \in \R^{d \times d}$ with $A_{i, j} \sim \mathcal{N}(0, 1)$ and perform the QR factorization i.e. $A = Q R$. Then, the direction matrix is the truncation of the $Q$ matrix i.e. $Q I_{d, \ell}$.
\paragraph*{Householder Reflection.} To obtain a direction matrix, we can use a Householder reflector. This can be done by sampling a vector $v$ from the unit sphere $\SX^{d-1}$. The direction matrix $G$ is defined as a Householder reflector, given by 
\begin{equation*}
G := I - 2vv^\intercal,
\end{equation*}
with $I \in \R^{d \times d}$ identity matrix. To obtain the desired matrix, we compute the product of $G$ with $I_{d,\ell}$, i.e., we take the first $\ell$ columns.  The (truncated) identity matrix can be generated and stored offline (note that since it is very sparse, it can be stored using a sparse format (e.g. the COO format proposed in scikit-learn library\cite{sklearn_api}). In this way, we can save resources in high-dimensional settings. In order to quantify the time-cost of this procedure, we compared the time of generating this kind of matrix with random matrices with different dimensions. For this experiment, we consider the $\ell = d$ case i.e. the most expensive. Matrices are computed in CPU and the details of the machine used are described in Appendix \ref{app:exp_details}. We report the mean and standard deviation of the time using $500$ repetitions. In Figure \ref{fig:matrix_generation}, we compare the time-cost of generating orthogonal matrices with this procedure against generating random matrices while in Table \ref{tab:matrix_generation} we report the mean and standard deviation of the results. 
\begin{figure}[H]
    \centering
    \includegraphics[width=0.5\linewidth]{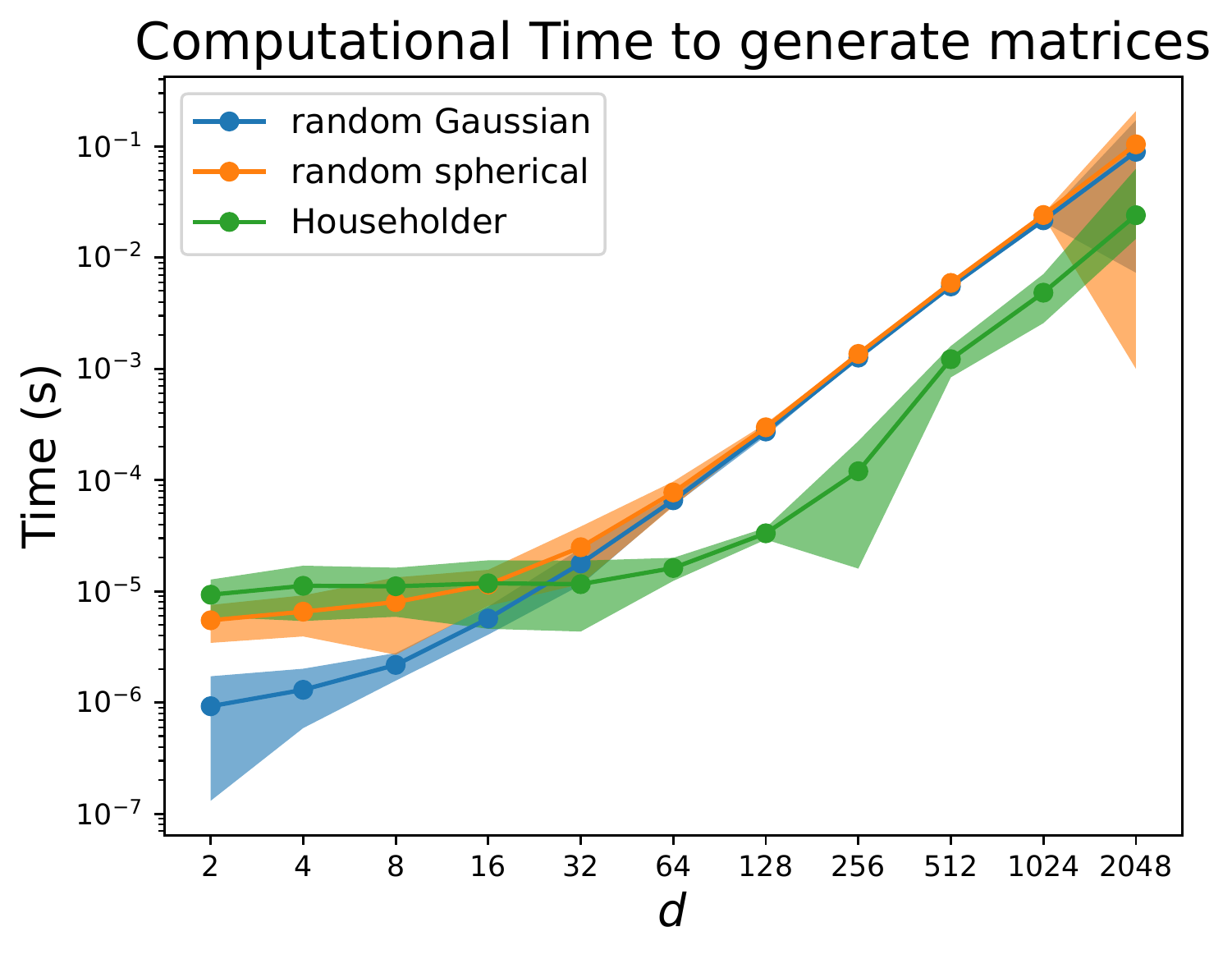}
    \caption{Time comparison in CPU of different methods to generate direction matrices.}
    \label{fig:matrix_generation}
\end{figure}
In Figure \ref{fig:matrix_generation}, we can observe that using this strategy we can limit the cost of generating random orthogonal matrices. In particular, for dimensions larger than $32$, our method is faster than random gaussian and spherical directions.
\begin{table}[H]
    \centering
    \caption{Comparison of the time-cost (seconds) of generating random and orthogonal matrices with different dimensions}\label{tab:matrix_generation}
    \begin{tabular}{llll}
        \toprule
        $d$ & Random Gaussian & Random Spherical & Householder \\
        \midrule
        $2$ & $9.27\times 10^{-7} \pm 7.96 \times 10^{-7}$ & $5.49\times 10^{-6} \pm 2.05\times 10^{-6}$ & $9.32\times 10^{-6} \pm 3.34\times 10^{-6}$\\
        $4$ & $1.30\times 10^{-6} \pm 7.21 \times 10^{-7}$ & $6.56\times 10^{-6} \pm 2.63\times 10^{-6}$ & $1.12\times 10^{-5} \pm 5.79\times 10^{-6}$\\
        $8$ & $2.18\times 10^{-6} \pm 6.06 \times 10^{-7}$ & $8.01\times 10^{-6} \pm 5.32\times 10^{-6}$ & $1.11\times 10^{-5} \pm 5.20\times 10^{-6}$\\
        $8$ & $2.18\times 10^{-6} \pm 6.06 \times 10^{-7}$ & $8.01\times 10^{-6} \pm 5.32\times 10^{-6}$ & $1.11\times 10^{-5} \pm 5.20\times 10^{-6}$\\
        $16$ & $5.69\times 10^{-6} \pm 1.61 \times 10^{-6}$ & $1.15\times 10^{-5} \pm 4.10\times 10^{-6}$ & $1.18\times 10^{-5} \pm 7.20\times 10^{-6}$\\
        $32$ & $1.78 \times 10^{-5} \pm 6.42 \times 10^{-6}$ & $2.49 \times 10^{-5} \pm 1.33 \times 10^{-5}$ & $1.16 \times 10^{-5} \pm 7.25 \times 10^{-6}$\\
        $64$ & $6.58 \times 10^{-5} \pm 7.03 \times 10^{-6}$ & $7.74 \times 10^{-5} \pm 1.95 \times 10^{-5}$ & $1.62 \times 10^{-5} \pm 3.79 \times 10^{-6}$\\
        $128$ & $2.73 \times 10^{-4} \pm 2.37 \times 10^{-5}$ & $2.98 \times 10^{-4} \pm 2.45 \times 10^{-5}$ & $3.32 \times 10^{-5} \pm 4.02 \times 10^{-6}$\\
        $256$ & $1.26 \times 10^{-3} \pm 2.79 \times 10^{-5}$ & $1.36 \times 10^{-3} \pm 2.90 \times 10^{-5}$ & $1.20 \times 10^{-4} \pm 1.04 \times 10^{-4}$\\
        $512$ & $5.50 \times 10^{-3} \pm 1.63 \times 10^{-4}$ & $5.91 \times 10^{-3} \pm 1.22 \times 10^{-4}$ & $1.22 \times 10^{-3} \pm 3.82 \times 10^{-4}$\\
        $1024$ & $2.16 \times 10^{-2} \pm 6.92 \times 10^{-4}$ & $2.41 \times 10^{-2} \pm 7.35 \times 10^{-4}$ & $4.83 \times 10^{-3} \pm 2.26 \times 10^{-3}$\\
        $2048$ & $8.92 \times 10^{-2} \pm 8.19 \times 10^{-2}$ & $1.04 \times 10^{-1} \pm 1.03 \times 10^{-1}$ & $2.40 \times 10^{-2} \pm 3.87 \times 10^{-2}$\\
        \bottomrule
    \end{tabular}
\end{table}
Moreover, if more computational resources are available, we can build $m > 1$ Householder reflectors $G_1,\cdots, G_m$ using $m$ random vectors $v_1, \cdots, v_m$ sampled i.i.d from $\SX^{d-1}$ and define the direction matrix as 
\begin{equation*}
G_1G_2\cdots G_m I_{d,\ell}.
\end{equation*}
It is important to note that when $m=d$, this procedure is equivalent to using the QR factorization.
\paragraph*{Haar Butterfly matrices.} We can build orthogonal matrices using Butterfly matrices \cite{TROGDON201948}. 
Let $G^{(0)} := [1]$, we can build an orthogonal matrix of dimension $d = 2^n$ with the following recursion
\begin{equation*}
    G^{(n)} = \begin{bmatrix}
     \cos (\theta_n) G^{(n - 1)} & 
     \sin (\theta_n) G^{(n - 1)} \\
     -\sin (\theta_n) G^{(n - 1)} & 
     \cos (\theta_n) G^{(n - 1)}  
    \end{bmatrix}
\end{equation*}
where $\theta_n$ is sampled uniformly in $[0, 2\pi]$. Then we compute $G I_{d,\ell}$ (we take the first $\ell$ columns). The construction of Haar butterfly matrices is faster than previous methods because it only requires simple operations. However, this procedure allows to build only matrices with $d = 2^n$ for $n \geq 0$. In literature, different methods were proposed to cope with this limitation e.g. \cite{genz2000methods}.

\section{Limitations}
In this appendix, we discuss the main practical limitations of Algorithm \ref{alg:ozd}. Like all finite-difference methods with multiple directions, O-ZD requires multiple function evaluations to execute a single step. In many practical applications, function evaluations can be time-consuming, leading to the use of a small number of directions $\ell$. This may result in poor performance as observed in numerical experiments. As for the subgradient method, in O-ZD the step size significantly affects performance, and tuning it can be challenging. To address this limitation, an adaptive stepsize selection method could be proposed. Furthermore, decreasing the sequence $h_k$ too quickly can lead to numerical instability, as noted in \cite{rando2022stochastic}.

\section{Other Experiments}\label{app:other_experiments}
We performed other experiments in minimizing convex functions. We considered the targets defined in Table \ref{tab:other_exp} and, for each  experiment, we reported the mean and standard deviation using $20$ repetitions.%
\begin{figure}[H]
    \centering
    \includegraphics[width=0.32\linewidth]{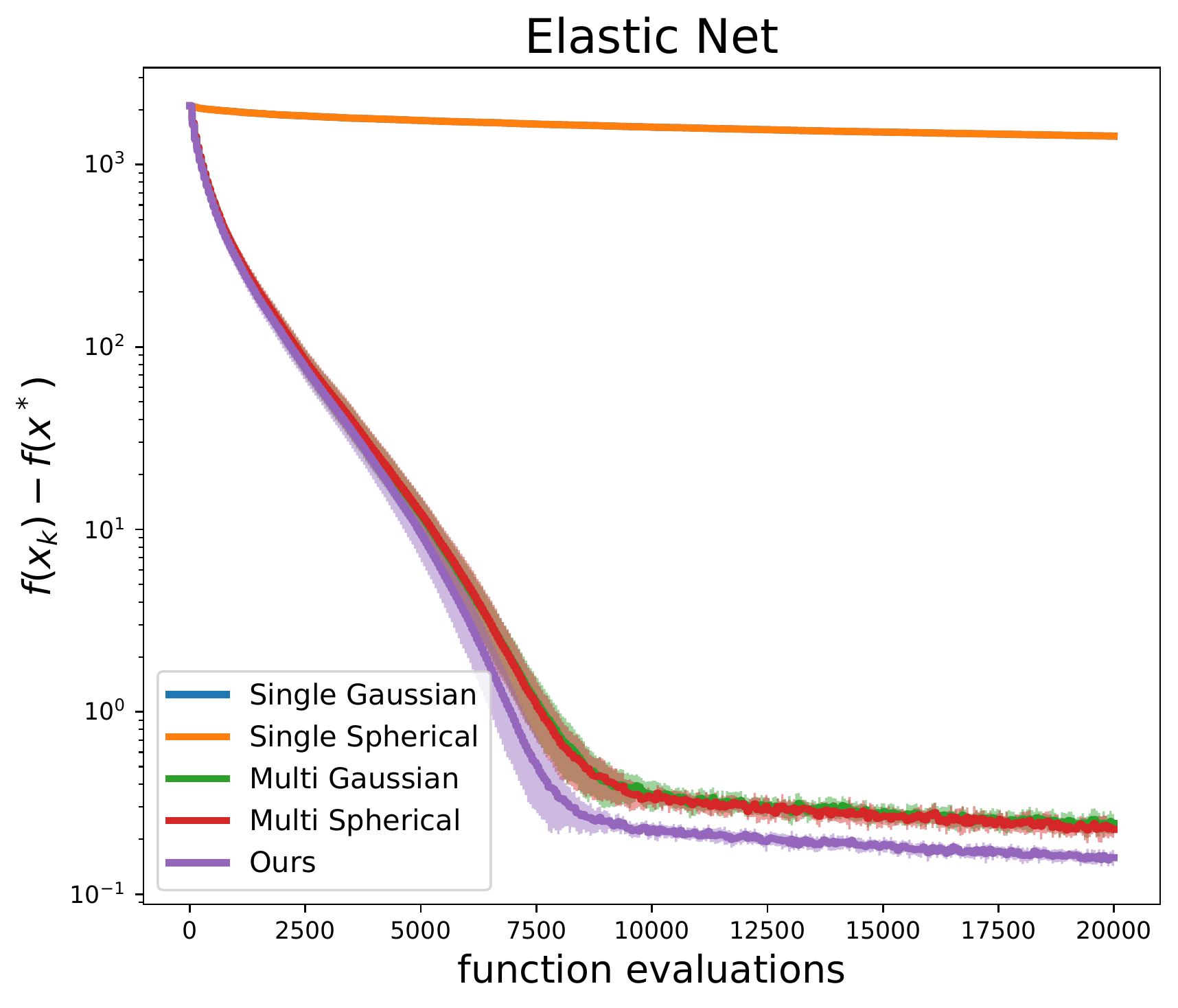}
    \includegraphics[width=0.32\linewidth]{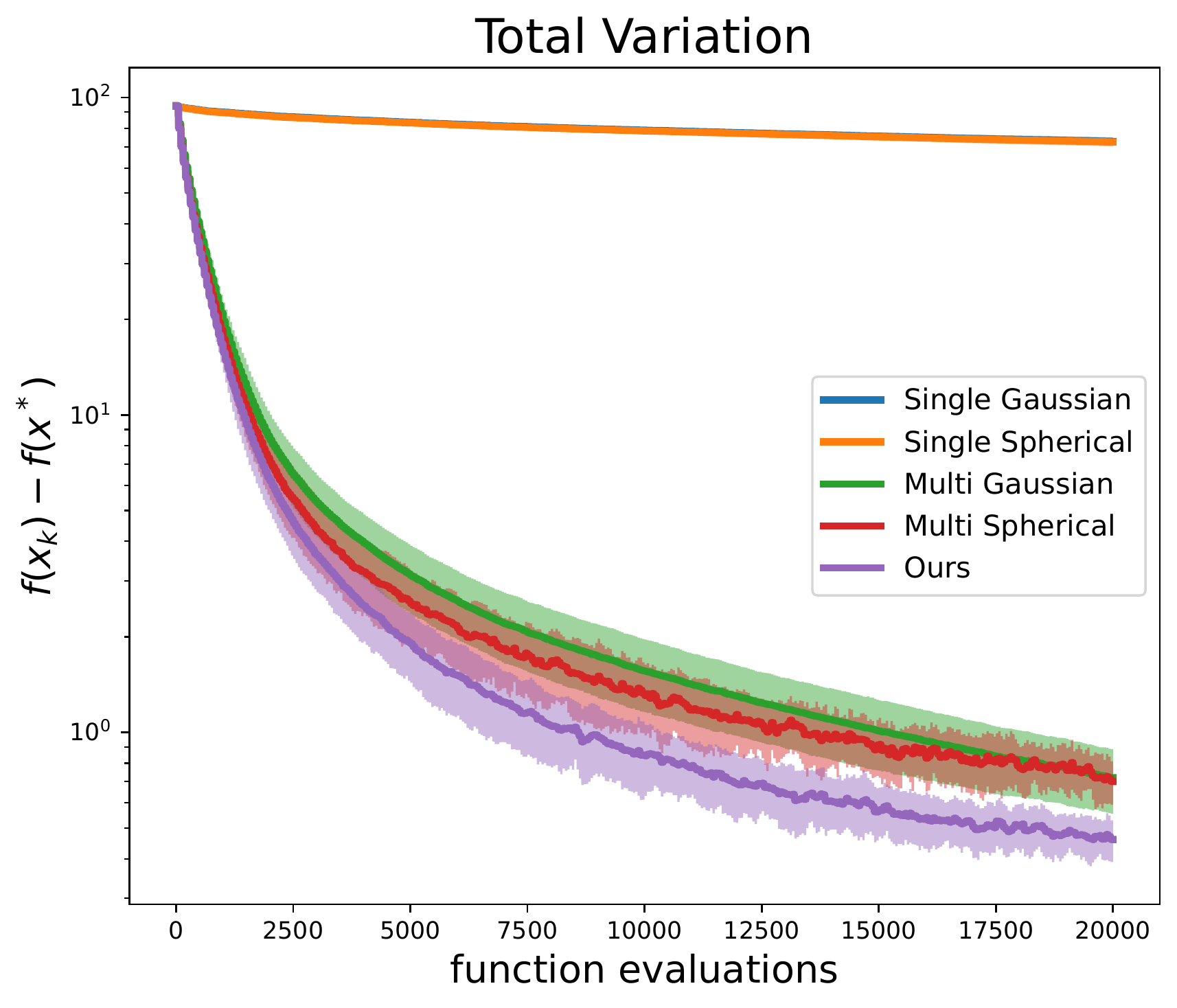}
    \includegraphics[width=0.32\linewidth]{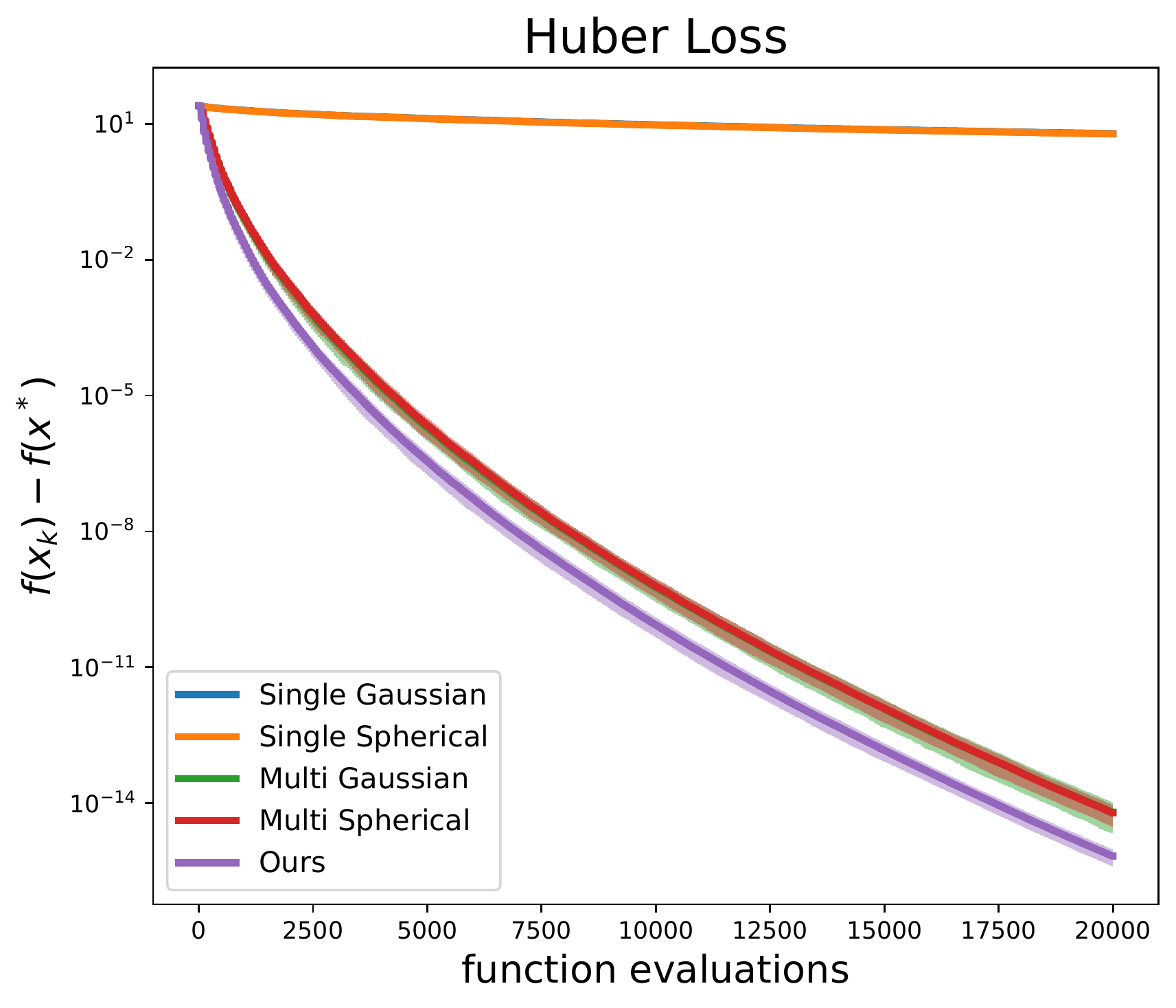}\\
    \includegraphics[width=0.32\linewidth]{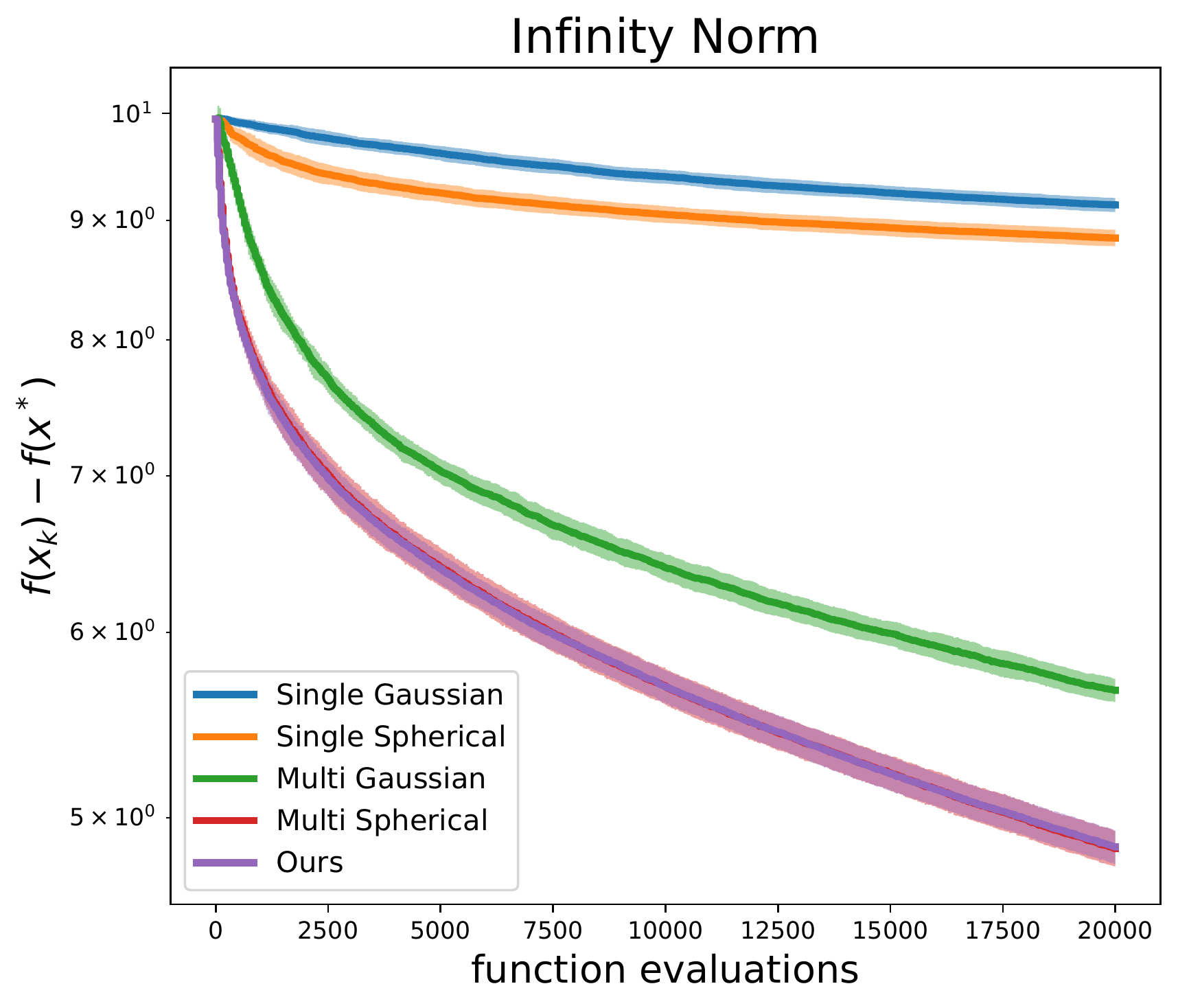}
    \includegraphics[width=0.32\linewidth]{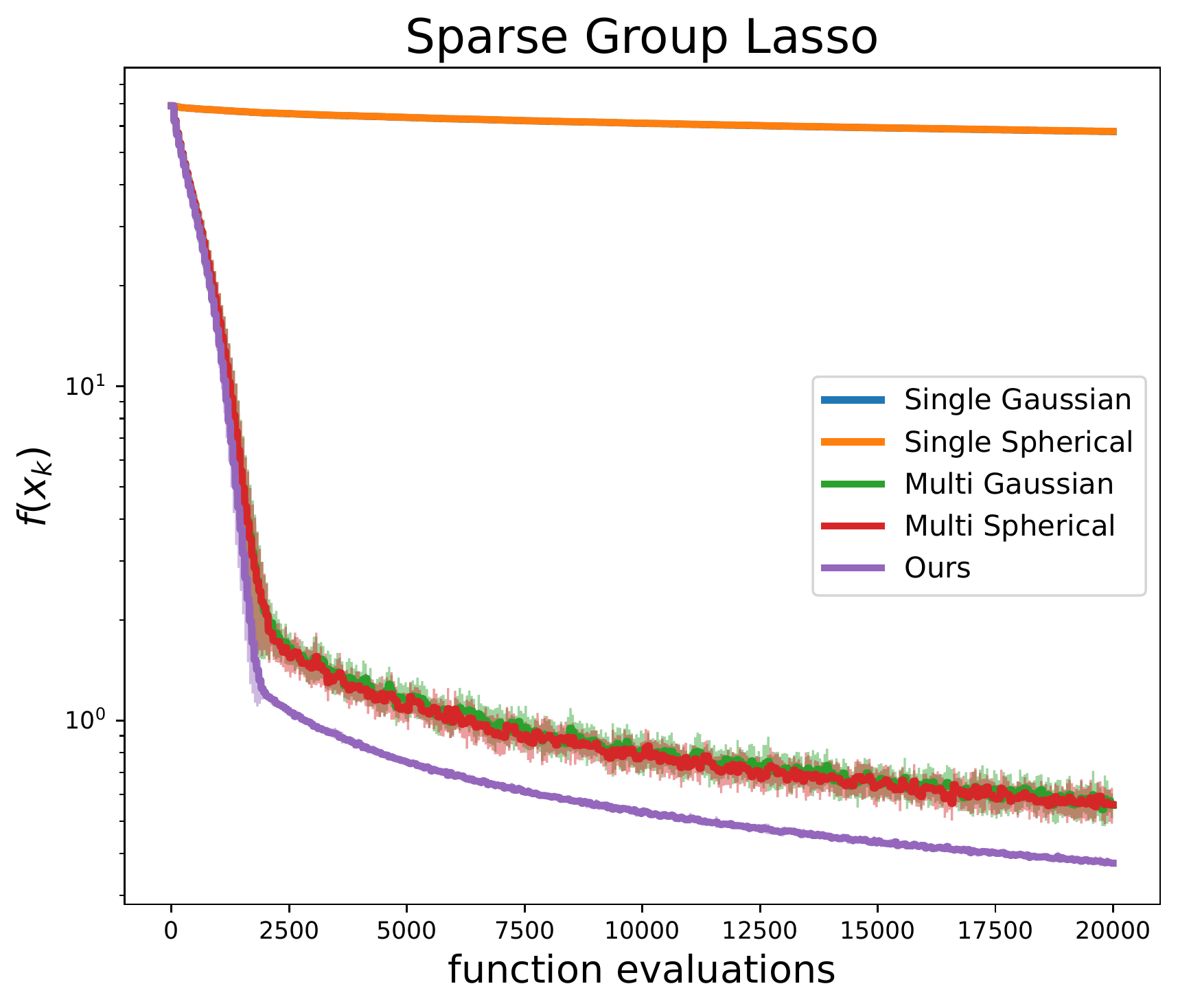}
    \includegraphics[width=0.32\linewidth]{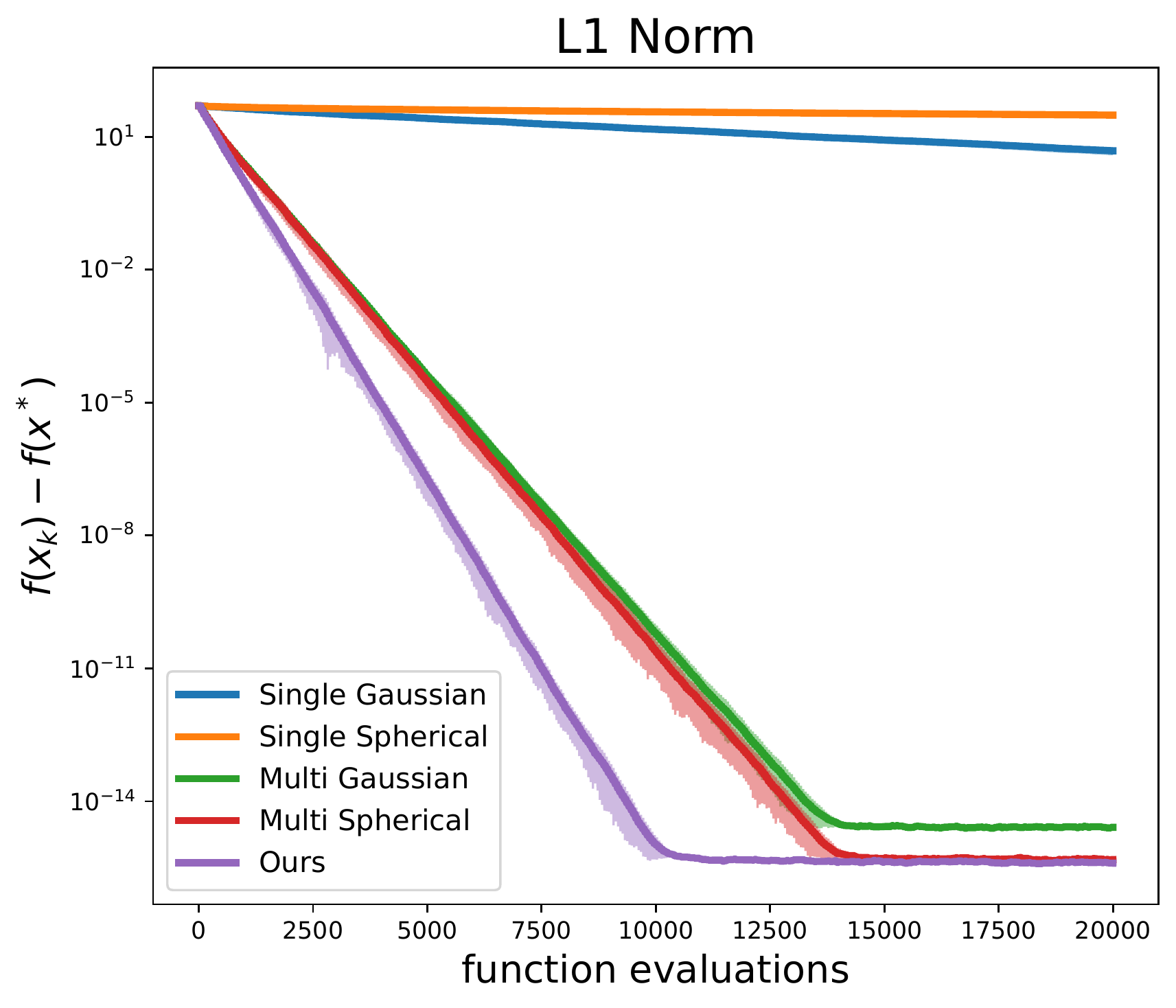}
    \caption{Function values per function evaluation in optimizing functions with different algorithms.}
    \label{fig:other_experiments}
\end{figure}
In Figure \ref{fig:other_experiments}, we can observe that structured finite-difference performs better than unstructured methods.
 \begin{table}[H]
     \centering
     \caption{Functions used and relative dimension and number of directions considered.}\label{tab:other_exp}
     \begin{tabular}{llll}
     \toprule
     Name & Definition & $d$ & $\ell$ \\
     \midrule
     Sparse Group Lasso & $f(x) := \sum\limits_{i = 1}^p \| x^{(\beta_i)} \|$ & $50$ & $25$ \\
     Huber Loss & $f(x) := \left\{ 
     \begin{array}{cc}
        0.5 \|x\|_2^2  & \|x\|_2 \leq \delta \\
        \delta \|x\|_2 - 0.5 \delta^2  & \text{otherwise}
     \end{array}
     \right.$ for $\delta > 0$ & $50$ & $25$\\
     Elastic Net & $f(x) := \alpha \| x \|_1 + 0.5 \beta \|x \|_2^2$ & $50$ & $25$\\
     L1 & $f(x) := \| x \|_1$ & $50$ & $25$\\
     Infinity Norm & $f(x) := \| x \|_{\infty}$ & $50$ & $20$\\
     Total Variation & $f(x) := \| x \|_{\text{TV}}$ & $50$ & $25$\\
     \bottomrule
    \end{tabular}
 \end{table}
 In Table \ref{tab:other_exp}, we define the function used for the experiments. In particular:
 \begin{itemize}
     \item Sparse Group Lasso: $p$ is set to $3$ and given an $x\in \mathbb{R}^d$, $x^{(\beta_i)}$ is a vector obtained by taking $3$ entries of $x$.
     \item Huber Loss: $\delta$ is set to $0.5$.
     \item Elastic Net: $\alpha, \beta$ are set to $0.5$.
 \end{itemize}
\end{document}